\algnewcommand{\algorithmicgoto}{\textbf{go to}}%
\algnewcommand{\Goto}[1]{\algorithmicgoto~\ref{#1}}
\newtheorem{thm}{Theorem}[section]
\theoremstyle{definition}
\newtheorem{defn}{Definition}[section]
\theoremstyle{remark}
\newtheorem{rem}{Remark}[section]
\journal{}
\begin{document}

\begin{frontmatter}


\author[]{Rabia Hameed \corref{cor1}}
\ead{Corresponding author: rabiahameedrazi@hotmail.com}
\address[]{Department of Mathematics, The Government Sadiq College Women University Bahawalpur \fnref{label3}}
\title{Variations in 2D and 3D models by a new family of subdivision schemes and algorithms for its analysis}




\begin{abstract}
A new family of combined subdivision schemes with one tension parameter is proposed by the interpolatory and approximating subdivision schemes. The displacement vectors between the points of interpolatory and approximating subdivision schemes provide the flexibility in designing the limit curves and surfaces. Therefore, the limit curves generated by the proposed subdivision schemes variate in between or around the approximating and interpolatory curves. We also design few analytical algorithms to study the properties of the proposed schemes theoretically. The efficiency of these algorithms are analyzed by calculating their time complexity. The graphical representations and graphical properties of the proposed schemes are also analyzed.
\end{abstract}

\begin{keyword}
subdivision scheme; curve; surface; algorithm; time complexity
\MSC[2010] 65D17, 65D07, 68U07, 65D10.

\end{keyword}

\end{frontmatter}


\section{Introduction}

A common problem in computer aided geometric design consists of drawing smooth curves and surfaces that either interpolate or approximate a given shape. Subdivision schemes are one of the most successful approaches in this area. Subdivision scheme are used to construct smooth curves and surfaces from a given set of control points through iterative refinements. Due to its clarity and simplicity, subdivision schemes have been esteemed in many fields such as image processing, computer graphics and computer animation. Generally, subdivision schemes can be categorized as interpolatory and approximating subdivision schemes. Interpolatory schemes get better shape control while approximating schemes have better smoothness. The most important interpolatory $4$-point binary subdivision scheme was proposed by Dyn et al. \cite{Dyn1} which gives the curves with $C^{1}$ continuity. This scheme was extended to the $6$-point binary interpolatory scheme by Weissman \cite{Weissman}. The scheme proposed in \cite{Weissman} gives the limiting curves with $C^{2}$ continuity. Most of the approximating schemes were developed from splines. Two of the most famous approximating schemes are Chaikin's subdivision scheme \cite{Chaikin} and cubic B-spline subdivision scheme \cite{Cohen}, which actually generate uniform quadratic and cubic B-spline curves with $C^{1}$ continuity and $C^{2}$ continuity, respectively. There also exists a class of parametric subdivision schemes, called combined subdivision schemes. Combined subdivision schemes can be regarded either as a member of the approximating schemes or of the interpolatory schemes according to the specific values assumed by the parameters. Pan et al. \cite{Pan} presented a combined ternary $3$-point relaxed subdivision scheme with three tension parameters. Novara and Romani \cite{Novara} extended their technique to construct a $3$-point relaxed ternary combined subdivision scheme with three tension parameters to generate curves up to $C^{3}$ continuity. Recently, Zhang et al. \cite{Zhang} also used similar technique to construct a $4$-point relaxed ternary scheme with four tension parameters to generate the curves up to $C^{4}$ smoothness. The nice performance of combined subdivision schemes motivates us to the direction of constructing the binary combined subdivision scheme. Our contribution in this manner is presented below.
\subsection{Motivation and contribution}
In this article, we define a novel technique for constructing combined subdivision schemes by using interpolatory and approximating subdivision schemes with optimal performance in curves and surfaces fitting. The optimal performance of the family of interpolatory schemes of \cite{Dubuc} in reproducing polynomials and good performance of the family of approximating B-spline schemes in generating curves with good continuity motivates us to unify them into a single family of schemes.

The $(2n+2)$-point interpolatory subdivision schemes defined in \cite{Dubuc} can reproduce polynomials up to degree $2n+1$ and generate the limiting curves with $C^{1}$. Whereas, the $(2n+2)$-point relaxed B-spline approximating schemes are $C^{4n}$-continuous, while the degree of polynomial reproduction of these schemes is only one. Since both type of schemes are primal schemes, hence each of them is the combination of one vertex and one edge rules. The vertex rule of interpolatory schemes keep the initial vertices at every level of subdivision, while the vertex rule of the approximating schemes update the initial vertices at each level of subdivision. Similarly, the edges rule of each scheme subdivides every edge of the given polygon into two edges at each level of subdivision. The limit curves generated by both type of schemes after 4 subdivision levels are shown in Figure \ref{motivation}. In this article, we construct a family of combined subdivision schemes with one parameter which controls the shape of the limit curve.

We construct the new family of schemes by the family of interpolatory schemes defined in \cite{Dubuc} and the family of approximating B-spline schemes. Therefore, the proposed family of schemes carry the properties of both types of schemes and give optimal numerical and mathematical results. From Figures \ref{motivation1}-\ref{motivation2}, we can see the better performance of the proposed schemes than that of their parent schemes (schemes which are used in their construction). Figure \ref{motivation1} show that if we choose the value of tension parameter for the proposed $4$-point scheme between -2.5 and -2 than this scheme is good for fitting noisy data. Figure \ref{motivation2}(c) shows that the curves fitted by the proposed $6$-point scheme do not give the artifacts as presented by the $6$-point scheme \cite{Dubuc} in Figure \ref{motivation2}(a). This figure also shows that the curve fitted by the proposed scheme preserve the shape of the initial polygon if we take value of tension parameter between -0.5 and 0, while the curve fitted by the $6$-point B-spline scheme does not have this ability as shown in Figure \ref{motivation2}(b).

The remainder of this article is organized as: Section 2 deals with some \emph{}basic definition and basic results. Section 3 provides the framework for constructing a family of combined binary subdivision schemes. In Section 4, we provide the analytical properties of the proposed family of schemes. Section 5 gives the graphical behaviors and graphical properties of the proposed family of schemes. Concluding remarks are presented in Section 6.
\section{Preliminaries}
In this section, we present some basics notations and definition relating to the results which we have used in this article.

A general compact form of linear, uniform and stationary binary univariate subdivision scheme $S_{a}$ which maps a polygon $P^{k}=\{P^{k}_{i}, i \in \mathbb{Z}\}$ to a refined polygon $P^{k+1}=\{P^{k+1}_{i}, i \in \mathbb{Z}\}$ is defined as
\begin{eqnarray}\label{subdivisionscheme}
  P_{i}^{k+1} &=& \sum \limits_{j \in \mathbb{Z}}a_{i -2 j} P_{j}^{k}, \quad i \in \mathbb{Z}.
\end{eqnarray}
The symbol of the above subdivision scheme is given by the Laurent polynomial
\begin{eqnarray}\label{polynomial}
  a(z) &=& \sum \limits_{i \in \mathbb{Z}}a_{i}z^{i}, \quad z \in \mathbb{C}\setminus \{0\},
\end{eqnarray}
where $a= \{a_{i}, i \in \mathbb{Z}\}$ is called the mask of the subdivision scheme. The necessary condition for the binary subdivision scheme (\ref{subdivisionscheme}) to be convergent is that its mask satisfies the basic sum rule
\begin{eqnarray}\label{sumrule}
\sum\limits_{j \in \mathbb{Z}}a_{i-2j}&=&1, \,\ i=0,1.
\end{eqnarray}
Which is equivalent to the following relation
\begin{eqnarray}\label{sumrule1}
&& a(1)=2 \,\ \mbox{and} \,\  a(-1)=0.
\end{eqnarray}
\begin{defn}
An algorithm is a procedure for solving a mathematical problem in a finite number of steps that frequently involves repetition of an operation.
\end{defn}
\begin{defn}
In computer science, the time complexity is the computational complexity that describes the amount of time it takes to run an algorithm. It is denoted by $T(n)$, where $n$ is the input of the given algorithm. The time complexity indicates the number of times the operations are executed in an algorithm. The number of instructions executed by a program is affected by the size of the input and how their elements are arranged.
\end{defn}
\begin{defn}
 Step Count Method consider each and every step to the given algorithm for calculating the final time complexity of the given algorithm. So each and every step in given algorithm contributes to the final complexity of given algorithm.
\end{defn}
\begin{defn}
Worst-case time complexity denotes the longest running time $\mathcal{O}(n)$ of an algorithm given any input of size $n$. It gives an upper bound on time requirements.
\end{defn}
\begin{defn}
Best-case time complexity denotes the shortest running time $\Omega(n)$ of an algorithm given any input of size $n$. It gives a lower bound on time requirements.
\end{defn}
\section{Family of the combined primal schemes}
In this section, we discuss the construction of the family of $(2n+2)$-point binary relaxed schemes. These schemes are the binary schemes, so are the combination of two refinement rules. One of these rules, consists of the affine combination of $(2n+1)$-point of level $k$ to insert a new point at level $k+1$, is used to update the vertex of the given polygon hence is called the vertex rule. The other refinement rule, consists of the affine combination of $(2n+2)$-points of level $k$ to insert a new point at level $k+1$, is used for subdividing each edge of the given polygon so is called the edge rule. Construction of the family of schemes is discussed in coming subsections in detail.
\begin{figure}[h!] 
\begin{center}
\begin{tabular}{c}
\epsfig{file=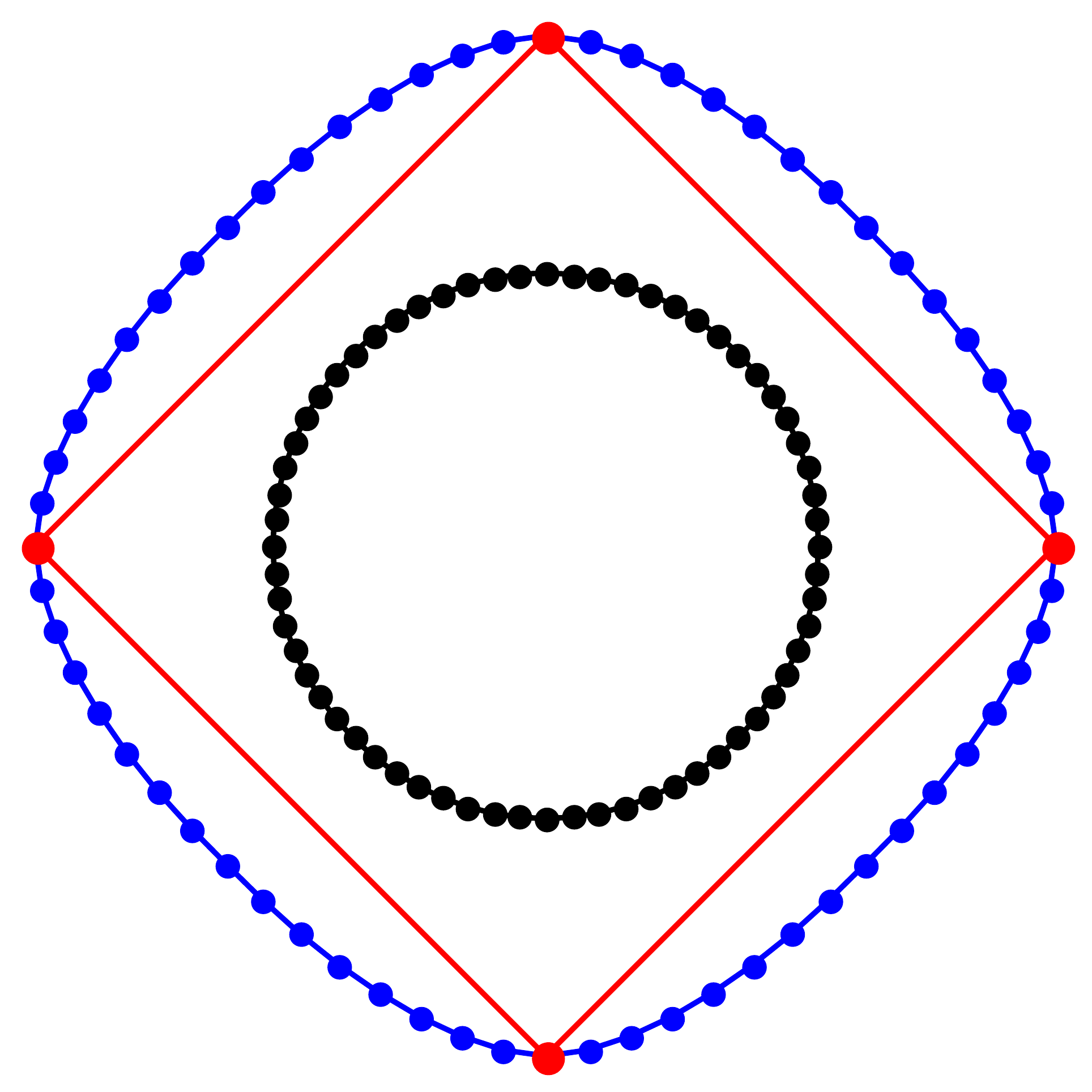, width=3.5 in}
\end{tabular}
\end{center}
 \caption[]{\label{motivation}\emph{Red bullets and red lines represent initial points and initial polygon respectively. Blue bullets and blue curve are the points and limiting curve generated by the $4$-point scheme of \cite{Dubuc} respectively after 4 subdivision levels. Whereas black bullets and black curve are the points and limiting curve generated by the $4$-point relaxed B-spline scheme respectively after 4 subdivision levels.}}
\end{figure}

\begin{figure}[h!] 
\begin{center}
\begin{tabular}{ccc}
\epsfig{file=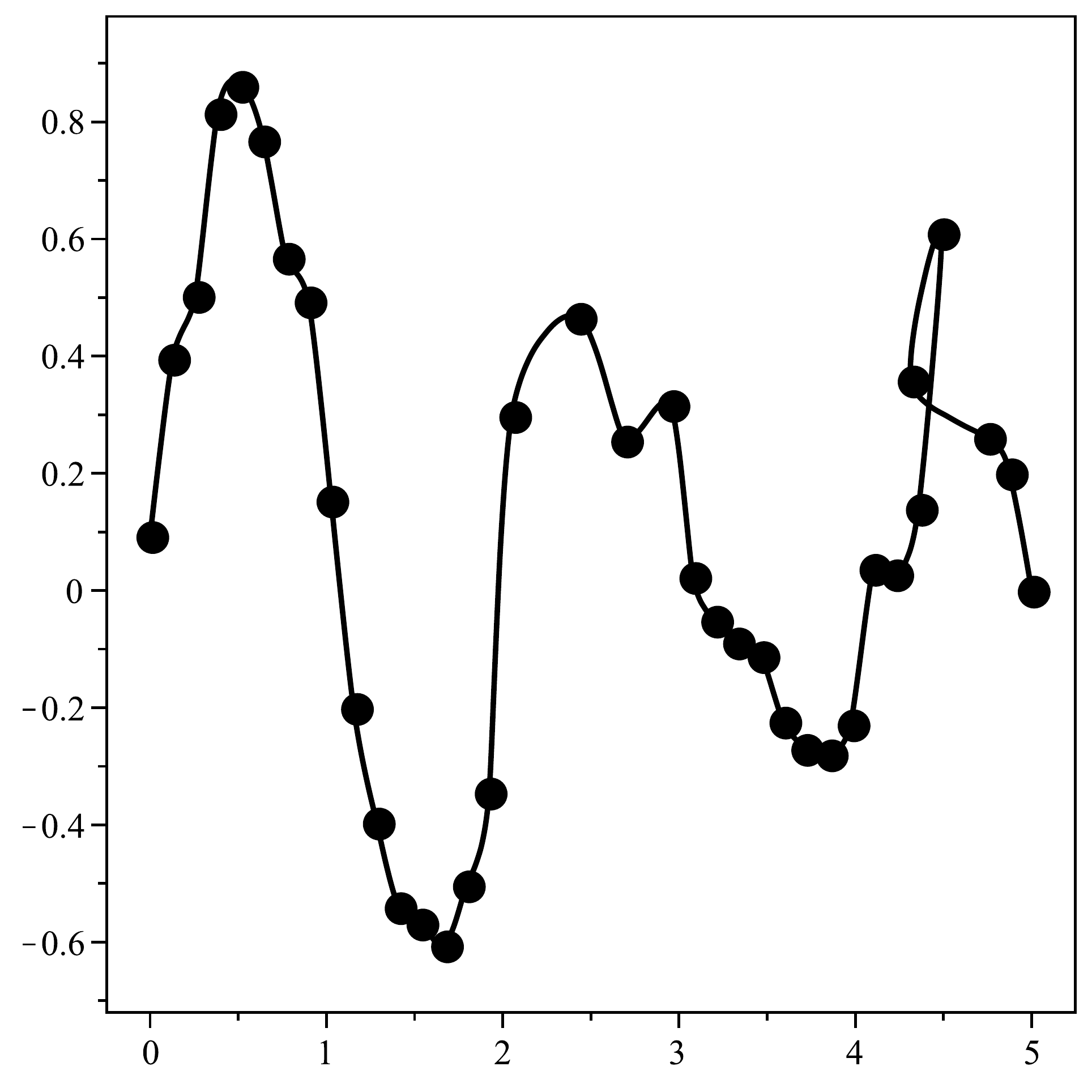, width=2.0 in} & \epsfig{file=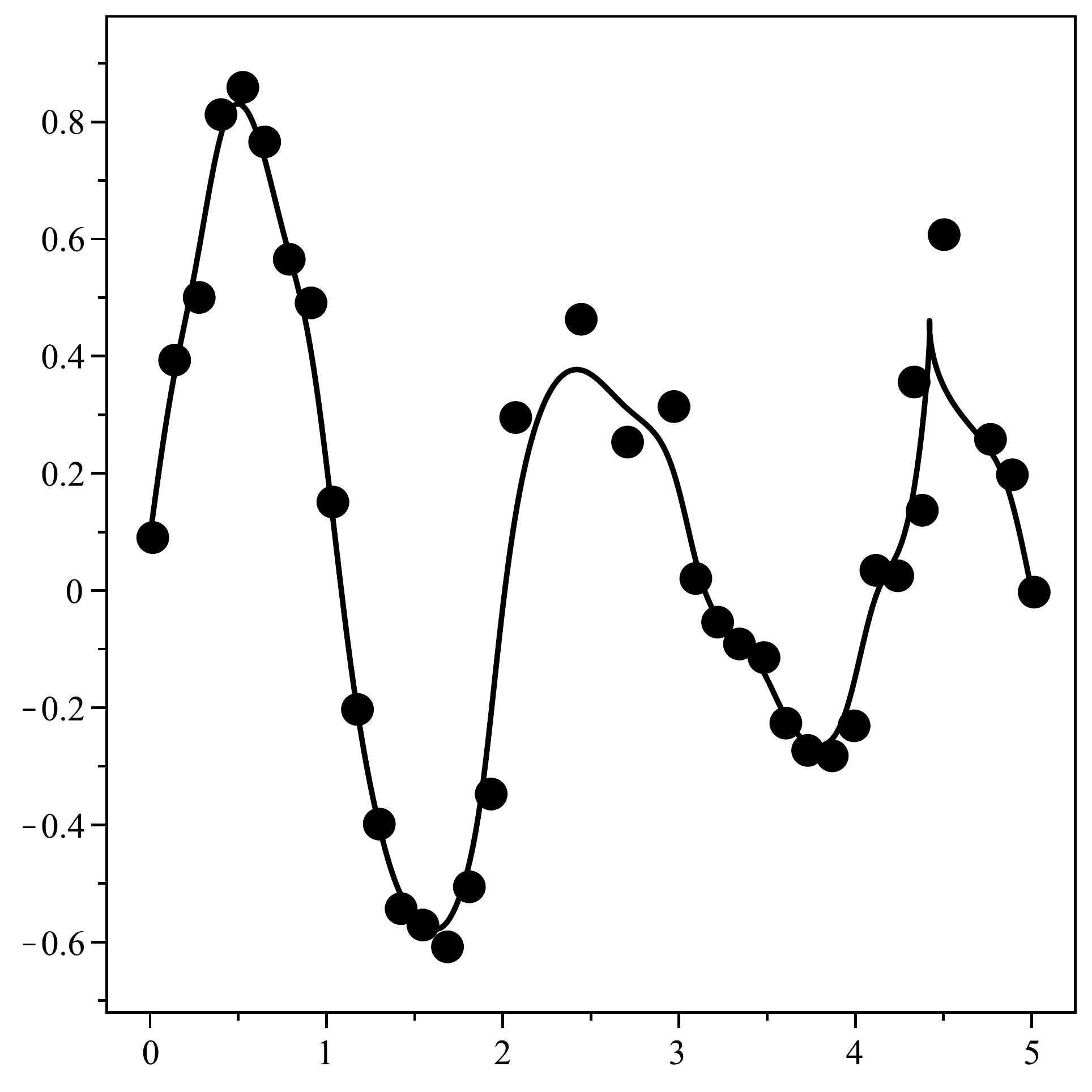, width=2.0 in} & \epsfig{file=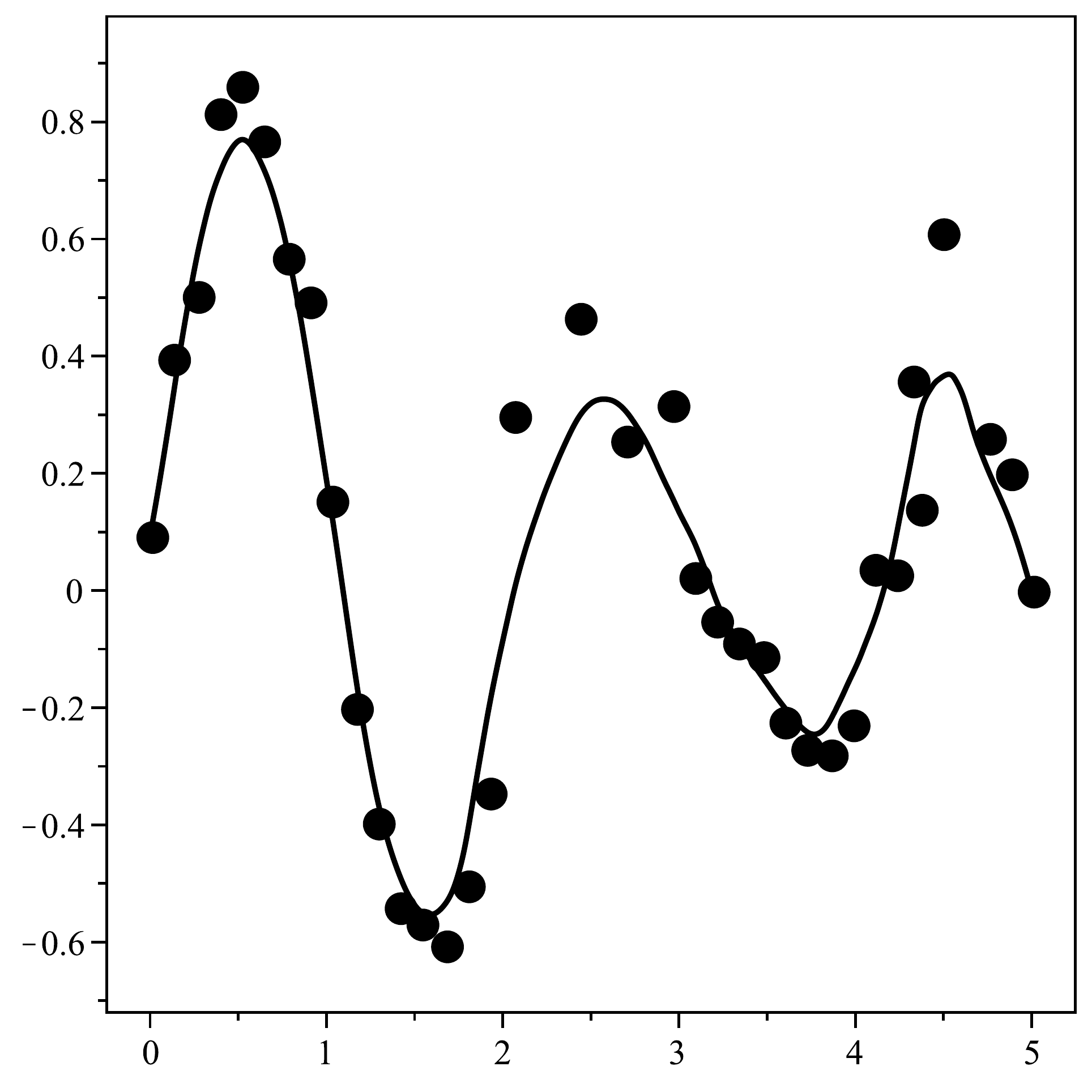, width=2.0 in} \\
(a) & (b) & (c) \\
$4$-point interpolatory scheme & $4$-point B-spline scheme & Proposed $4$-point scheme \\
& & for $\alpha=-2.5$
\end{tabular}
\end{center}
 \caption[]{\label{motivation1}\emph{Black bullets are the initial points, whereas the solid lines show the limit curves generated by the binary primal schemes.
 }}
\end{figure}

\begin{figure}[h!] 
\begin{center}
\begin{tabular}{ccc}
\epsfig{file=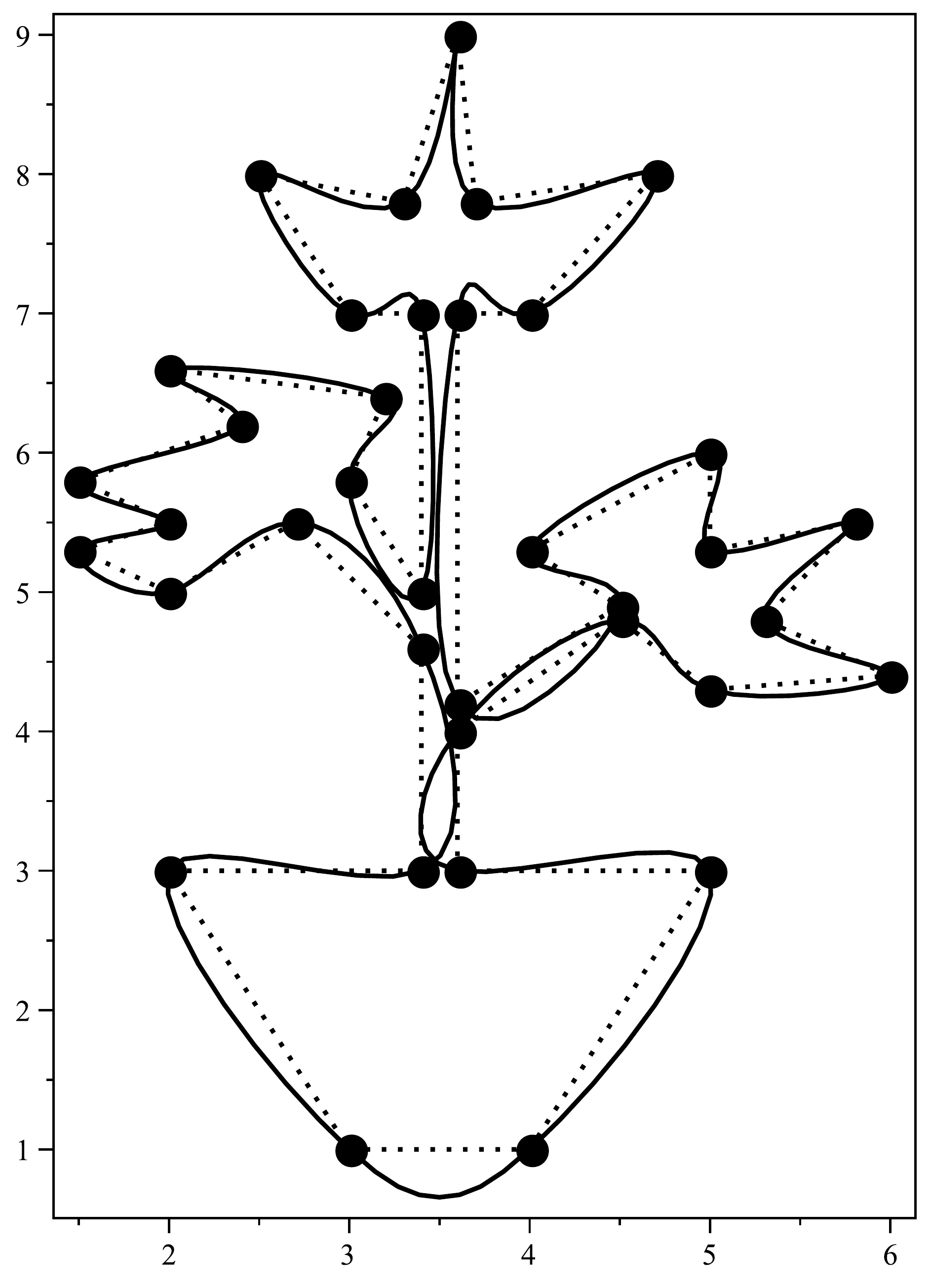, width=2.0 in} & \epsfig{file=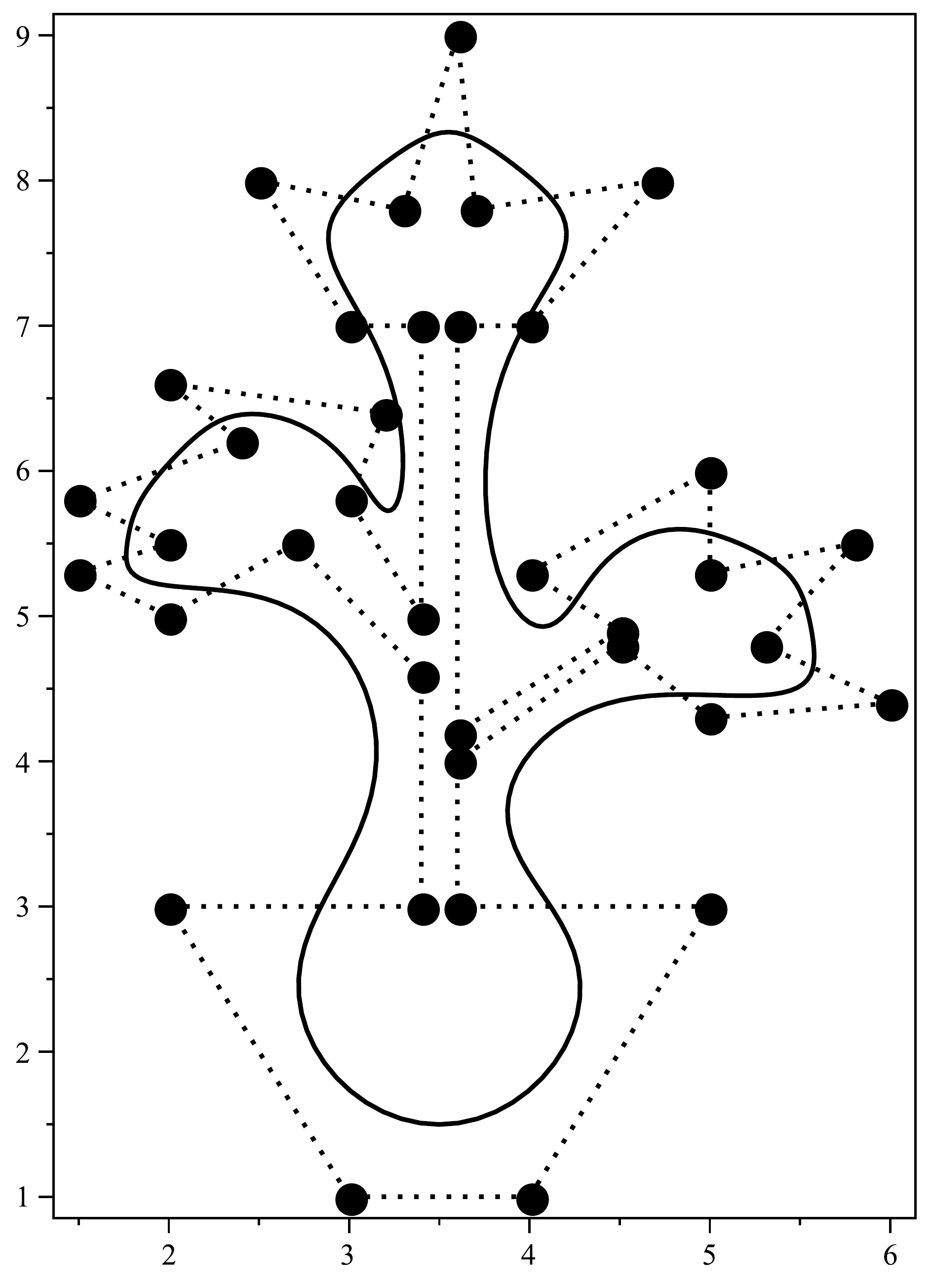, width=2.0 in} & \epsfig{file=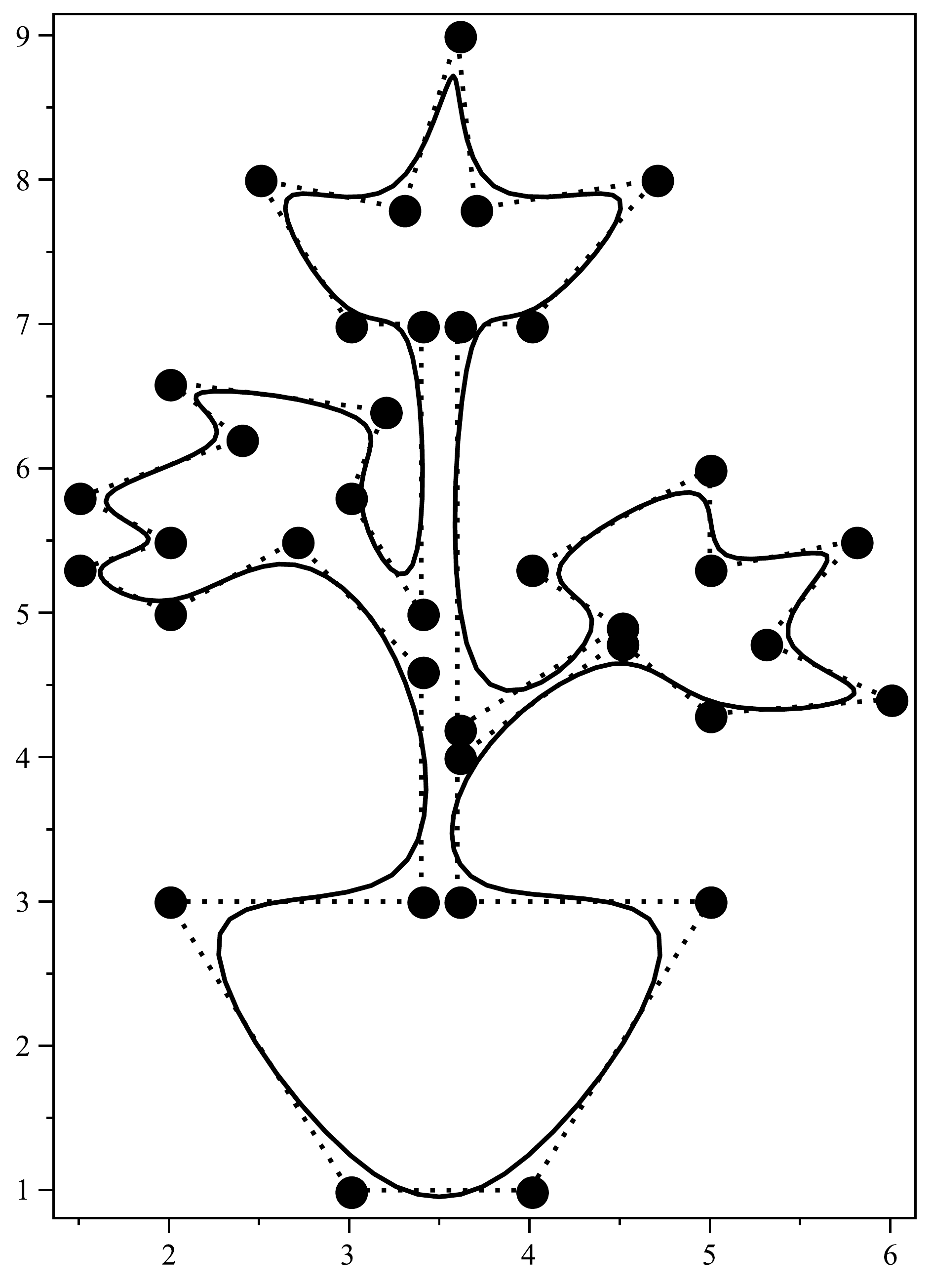, width=2.0 in} \\
(a) & (b) & (c) \\
$6$-point interpolatory scheme & $6$-point B-spline scheme & Proposed $6$-point scheme \\
& & for $\alpha=-0.32$
\end{tabular}
\end{center}
 \caption[]{\label{motivation2}\emph{Dotted lines and black bullets are the initial polygons and initial points respectively. Whereas the solid lines show the limit curves generated by the binary primal schemes.
 }}
\end{figure}

\subsection{Framework for the construction of family of subdivision schemes}
The family of $(2n+2)$-point interpolatory subdivision schemes, constructed by the Lagrange's interpolatory polynomials in \cite{Dubuc}, is
\begin{eqnarray}\label{DD}
\left\{\begin{array}{ccc}
 R_{2i,2n+2}^{k+1} &=&P_{i,2n+2}^{k}, \\ \\
 R_{2i+1,2n+2}^{k+1} &=&\frac{1}{2^{4n+1}}\sum\limits_{j=-n-1}^{n}\frac{(-1)^{j}(n+1)}{(2j+1)}\left(\begin{array}{c}2n+1\\n\end{array}\right)\left(\begin{array}{c}2n+1\\n+j+1\end{array}\right)P^{k}_{i+j+1,2n+2}.
\end{array}\right.
\end{eqnarray}
The refinement rules of the family of $(2n+2)$-point relaxed approximating schemes, i.e. degree-$(4n+1)$ binary B-spline subdivision schemes, are
\begin{eqnarray}\label{BSpline}
\left\{\begin{array}{ccc}
  Q_{2i,2n+2}^{k+1} &=&\frac{1}{2^{4n+1}}\sum\limits_{j=0}^{2n}\left(\begin{array}{c}4n+2\\2j+1\end{array}\right)P_{i+j-n,2n+2}^{k}, \\ \\
  Q_{2i+1,2n+2}^{k+1} &=&\frac{1}{2^{4n+1}}\sum\limits_{j=0}^{2n+1}\left(\begin{array}{c}4n+2\\2j\end{array}\right)P_{i+j-n,2n+2}^{k},
\end{array}\right.
\end{eqnarray}
where $n \in \mathbb{Z}^{+}$. Let us denote the displacement vectors from the degree-$(4n+1)$ B-spline refinement points $Q_{2i,2n+2}^{k+1}$ $\&$ $Q_{2i+1,2n+2}^{k+1}$ to the refinement points $R_{2i,2n+2}^{k+1}$ $\&$ $R_{2i+1,2n+2}^{k+1}$ of $(2n+2)$-point interpolatory subdivision scheme defined in (\ref{DD}) after one step of refinements by $D_{2i,2n+2}^{k+1}$ $\&$ $D_{2i+1,2n+2}^{k+1}$ respectively. Hence the displacement vectors are
\begin{eqnarray}\label{vectors}
\left\{\begin{array}{ccc}
  D_{2i,2n+2}^{k+1} &=& R_{2i,2n+2}^{k+1}-Q_{2i,2n+2}^{k+1}, \\ \\
  D_{2i+1,2n+2}^{k+1} &=& R_{2i+1,2n+2}^{k+1}-Q_{2i+1,2n+2}^{k+1}.
  \end{array}\right.
\end{eqnarray}
Now a new family of primal combined $(2n+2)$-point subdivision schemes can be obtained by translating the points $R_{2i,2n+2}^{k+1}$ $\&$ $R_{2i+1,2n+2}^{k+1}$, that are provided by the binary interpolatory subdivision schemes (\ref{DD}) to the new position according to the displacement vectors $\alpha D_{2i,2n+2}^{k+1}$ $\&$ $\alpha D_{2i+1,2n+2}^{k+1}$ respectively.
\begin{eqnarray}\label{vectors1}
\left\{\begin{array}{ccc}
  P_{2i,2n+2}^{k+1} &=& R_{2i,2n+2}^{k+1}+\alpha D_{2i,2n+2}^{k+1}, \\ \\
  P_{2i+1,2n+2}^{k+1} &=& R_{2i+1,2n+2}^{k+1}+\alpha D_{2i+1,2n+2}^{k+1}.
  \end{array}\right.
\end{eqnarray}
Which can be further written as
\begin{eqnarray}\label{proposed}
\left\{\begin{array}{ccc}
  P_{2i,2n+2}^{k+1} &=&\frac{-\alpha}{2^{4n+1}}\sum\limits_{j=0,j\neq n}^{2n}\left(\begin{array}{c}4n+2\\2j+1\end{array}\right)P_{i+j-n,2n+2}^{k}+\frac{1}{2^{4n+1}}\left[2^{4n+1}(1+\alpha)-\alpha \times \right.\\&&\left. \left(\begin{array}{c}4n+2\\2n+1\end{array}\right)\right]P_{i,2n+2}^{k}, \\ \\
  P_{2i+1,2n+2}^{k+1} &=&\frac{1}{2^{4n+1}}\sum\limits_{j=0}^{2n+1}\left[\frac{(-1)^{j-n-1}(n+1)}{2j-2n-1}\left(\begin{array}{c}2n+1\\n\end{array}\right)\left(\begin{array}{c}2n+1\\j\end{array}\right)+\alpha\left\{\frac{(-1)^{j-n-1}(n+1)}{2j-2n-1}\times \right.\right.\\&&\left.\left.
  \left(\begin{array}{c}2n+1\\n\end{array}\right)\left(\begin{array}{c}2n+1\\j\end{array}\right) -\left(\begin{array}{c}4n+2\\2j\end{array}\right)\right\}\right]P_{i+j-n,2n+2}^{k}.
\end{array}\right.
\end{eqnarray}
The rules defined in (\ref{proposed}) are the refinement rules of the proposed family of $(2n+2)$-point combined schemes.
Which are equivalent to the following general form
\begin{eqnarray}\label{proposed1}
\left\{\begin{array}{ccc}
  P_{2i,2n+2}^{k+1} &=&\sum\limits_{j=0}^{2n}a_{2j+1,2n+2}P_{i+j-n,2n+2}^{k}, \\ \\
  P_{2i+1,2n+2}^{k+1} &=&\sum\limits_{j=0}^{2n+1}a_{2j,2n+2}P_{i+j-n,2n+2}^{k},.
\end{array}\right.
\end{eqnarray}
The mask symbol of the above scheme is
\begin{eqnarray}\label{symbol-proposed1}
a_{2n+2}(z)&=&\sum\limits_{j=0}^{4n+2}a_{j,2n+2}z^{j}=\frac{(1+z)^{2n+2}}{2^{2n+1}}A_{2n+2}(z),
\end{eqnarray}
with
\begin{eqnarray}\label{symbol-proposed2}
A_{2n+2}(z)&=&\sum\limits_{j=0}^{2n}A_{j,2n+2}z^{j}.
\end{eqnarray}
where $a_{j,2n+2}:j=0,1,\ldots,4n+2$ and $A_{j,2n+2}:j=0,1,\ldots,2n$ are the coefficients of $z^{j}:j=0,1,\ldots,4n+2$ and $z^{j}:j=0,1,\ldots,2n$ in (\ref{symbol-proposed1}) and (\ref{symbol-proposed2}) respectively.
\begin{rem}
If $\alpha \rightarrow 0$, the family of proposed $(2n+2)$-point schemes converges to the family of $(2n+2)$-point interpolatory schemes proposed in \cite{Dubuc}. Whereas if $\alpha \rightarrow -1$, the family of proposed $(2n+2)$-point schemes converges to the family of $(2n+2)$-point relaxed approximating B-spline schemes of degree-$(4n+1)$.
\end{rem}
\subsection{The geometrical and mathematical interpretation of the family of schemes}
\begin{figure}[htb] 
\begin{center}
\begin{tabular}{c}
\epsfig{file=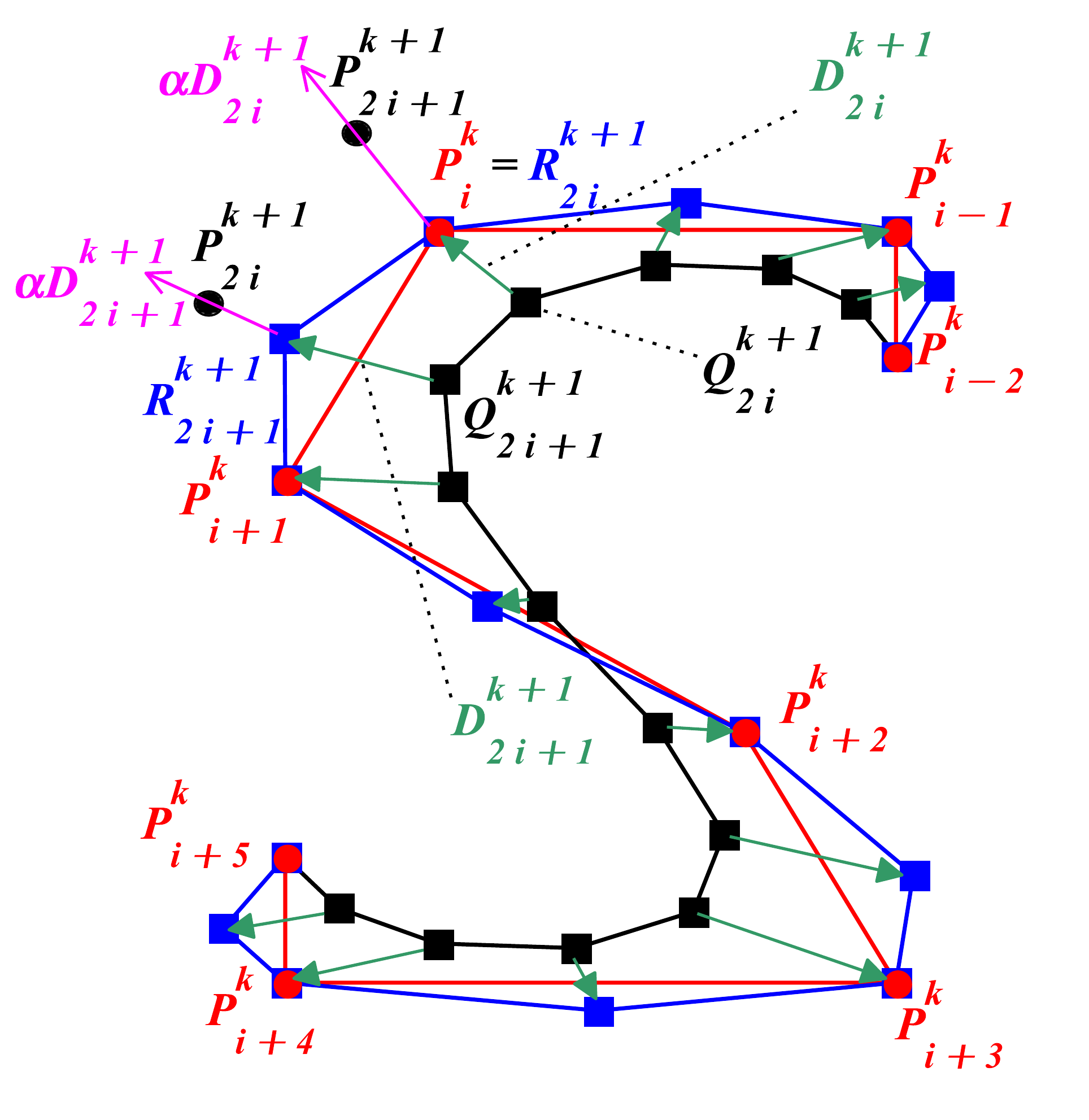, width=3.5 in}
\end{tabular}
\end{center}
 \caption[]{\label{construction}\emph{Geometrical construction of the scheme for $k=0$ and $n=1$.}}
\end{figure}
The family of proposed schemes (\ref{proposed}) is obtained by moving the points $R_{2i,2n+2}^{k+1}$ and $R_{2i+1,2n+2}^{k+1}$ of the family of schemes (\ref{DD}) to the new position according to the displacement vectors $\alpha D_{2i,2n+2}^{k+1}$ and $\alpha D_{2i+1,2n+2}^{k+1}$ respectively. In order to make the construction of the family of schemes understandable, we explain the following steps by fixing $n=1$ and $k=0$.

 \textbf{Step 1:} We take the points $P^{0}_{i-2,4}=P^{k}_{i-2}$, $P^{0}_{i-1,4}=P^{k}_{i-1}$, $P^{0}_{i,4}=P^{k}_{i}$, $P^{0}_{i+1,4}=P^{k}_{i+1}$, $P^{0}_{i+2,4}=P^{k}_{i+2}$, $P^{0}_{i+3,4}=P^{k}_{i+3}$, $P^{0}_{i+4,4}=P^{k}_{i+4}$ and $P^{0}_{i+5,4}=P^{k}_{i+5}$ as the initial control points, i.e. at zeroth level of subdivision. The points are shown by red bullets in Figure \ref{construction}. In this figure, the combination of red lines make the initial polygon.

 \textbf{Step 2:} Now we calculate points $R_{2i}^{k+1}$ and $R_{2i+1}^{k+1}$ which are denoted by blue solid squares in Figure \ref{construction}. We calculate fifteen points (blue solid squares) by applying the following refinement rules of the $4$-point interpolatory scheme of Deslauriers and Dubuc \cite{Dubuc} on the initial points
\begin{eqnarray}\label{11}
\left\{\begin{array}{ccc}
R_{2i}^{k+1}=R_{2i,4}^{k+1} &=& P_{i}^{k}, \\ \\
R_{2i+1}^{k+1} =R_{2i+1,4}^{k+1} &=& -\frac{1}{16}P^{k}_{i-1}+\frac{9}{16}P^{k}_{i}+\frac{9}{16}P^{k}_{i+1}-\frac{1}{16}P^{k}_{i+2}.
\end{array}\right.
\end{eqnarray}
All the points inserted by the above subdivision scheme are shown in Figure \ref{construction}, however we restrict our next calculations to only two points $R_{2i}^{k+1}$ and $R_{2i+1}^{k+1}$. In this figure, blue polygon is the initial polygon obtained from the scheme (\ref{11}).

\textbf{Step 3:} In this step, we calculate and restrict next calculations to only the points $Q_{2i}^{k+1}$ and $Q_{2i+1}^{k+1}$ denoted by black solid squares. Again the fifteen points, denoted by black solid squares in Figure \ref{construction}, are calculated by the refinement rules of the $4$-point relaxed binary B-spline scheme of degree-5 which is defined below
\begin{eqnarray}\label{22}
\left\{\begin{array}{ccc}
 Q_{2i}^{k+1}=Q_{2i,4}^{k+1} &=& \frac{3}{16}P^{k}_{i-1}+\frac{10}{16}P^{k}_{i}+\frac{3}{16}P^{k}_{i+1}, \\ \\
Q_{2i+1}^{k+1}=Q_{2i+1,4}^{k+1} &=& \frac{1}{32}P^{k}_{i-1}+\frac{15}{32}P^{k}_{i}+\frac{15}{32}P^{k}_{i+1}+\frac{1}{32}P^{k}_{i+2}.
\end{array}\right.
\end{eqnarray}
In this figure, black polygon is the initial polygon made by the above scheme.

\textbf{Step 4:} Now we calculate the displacement vectors from the points defined in (\ref{22}) to the points defined in (\ref{11}). These vectors are $D_{2i}^{k+1}=D_{2i,4}^{k+1}=R_{2i}^{k+1}-Q_{2i}^{k+1}=-\frac{3}{16}P^{k}_{i-1}+\frac{3}{8}P^{k}_{i}-\frac{3}{16}P^{k}_{i+1}$ and $D_{2i+1}^{k+1}=D_{2i+1,4}^{k+1}=R_{2i+1}^{k+1}-Q_{2i+1}^{k+1}=-\frac{3}{32}P^{k}_{i-1}+\frac{3}{32}P^{k}_{i}+\frac{3}{32}P^{k}_{i+1}-\frac{3}{32}P^{k}_{i+2}$. In Figure \ref{construction}, the above displacement vectors and the other related vectors are shown by green vectors.

\textbf{Step 5:} In this step, we calculate vectors $\alpha D_{2i}^{k+1}$ and $\alpha D_{2i+1}^{k+1}$, where $\alpha$ is any real number that can be change the magnitude or the direction or both of the vectors $D_{2i}^{k+1}$ and $D_{2i+1}^{k+1}$. In Figure \ref{construction}, these vectors are denoted by magenta lines.

\textbf{Step 6:} The last step of our method is to translate the points obtained by the refinement rules in (\ref{11}) to the new position by using displacement vectors $\alpha D_{2i}^{k+1}$ and $\alpha D_{2i+1}^{k+1}$. So we get following refinement rules of the new combined $4$-point relaxed scheme
\begin{eqnarray*}
P_{2i}^{k+1}=P_{2i,4}^{k+1} &=& -\frac{3}{16}\alpha P^{k}_{i-1}+\left(1+\frac{3}{8}\alpha \right)P^{k}_{i}-\frac{3}{16}\alpha P^{k}_{i+1}, \\
P_{2i+1}^{k+1}=P_{2i+1,4}^{k+1} &=& -\left(\frac{1}{16}+\frac{3}{32}\alpha \right)P^{k}_{i-1}+\left(\frac{9}{16}+\frac{3}{32}\alpha \right)P^{k}_{i}+\left(\frac{9}{16}+\frac{3}{32}\alpha \right)P^{k}_{i+1}\\&&-\left(\frac{1}{16}+\frac{3}{32}\alpha \right)P^{k}_{i+2}.
\end{eqnarray*}
In Figure \ref{construction}, the points of the above proposed scheme are denoted by black solid circles.
\begin{rem}
For the positive values of tension parameter $\alpha$, the curves generated by the proposed subdivision schemes lie outside the initial polygon, whereas for negative values of $\alpha$, the curves lie inside the initial polygon. For $\alpha=0$, the curve generated by the proposed schemes passes through the initial control points.
\end{rem}

\section{Analysis of the proposed family of subdivision schemes}
In this section, we prove the characteristics of the proposed subdivision schemes analytically. We prove few theorems and present various algorithms to check these properties.
\subsection{Support}
The support of a subdivision scheme represents the portion of the limit curve effected by the displacement of a single control point from its initial place. Now we calculate the support of the proposed family of subdivision schemes.
\begin{thm}
If $\phi_{2n+2}$ is the basic limit function of the family of subdivision schemes (\ref{proposed}). Then its support is $supp(\phi_{2n+2})=[-(2n+1),(2n+1)]$.
\end{thm}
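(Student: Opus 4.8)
The plan is to read the support off the two refinement rules, using the standard fact that the basic limit function of a scheme with a finitely supported mask is compactly supported, with support equal to the convex hull of the (centred) mask support. Here $\phi_{2n+2}$ is the limit of the process (\ref{proposed}) applied to the Kronecker data $\delta=\{\delta_i^0\}$ with $\delta_0^0=1$ and $\delta_i^0=0$ for $i\neq 0$; placing the entry $(S^k\delta)_i$ at the dyadic parameter $2^{-k}i$, one has $\mathrm{supp}(\phi_{2n+2})=\overline{\bigcup_{k\ge 0}\{\,2^{-k}i:\ (S^k\delta)_i\neq 0\,\}}$, where $S$ denotes one step of (\ref{proposed}).

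First I would record which old points each new point sees. From (\ref{proposed}) --- equivalently from the factorisation (\ref{symbol-proposed1})--(\ref{symbol-proposed2}), which displays the mask as spanning the $4n+3$ positions $0,\dots,4n+2$ --- the vertex rule writes $P_{2i,2n+2}^{k+1}$ through $P_{i-n}^k,\dots,P_{i+n}^k$ and the edge rule writes $P_{2i+1,2n+2}^{k+1}$ through $P_{i-n}^k,\dots,P_{i+n+1}^k$. The first window is symmetric about the parameter $i$ (the location of the new vertex) and the second about $i+\tfrac12$ (the location of the new edge point), so the scheme is symmetric, and in the centred form $P^{k+1}_\ell=\sum_m b_{\ell-2m}P^k_m$ its mask $\{b_m\}$ is supported on $\{-(2n+1),\dots,2n+1\}$.

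Next I would propagate the support. One refinement of $\delta$ makes $(S\delta)_i\neq 0$ precisely for $i\in\{-(2n+1),\dots,2n+1\}$, the two extreme entries being produced by the edge rule; this is the parameter interval $[-\tfrac{2n+1}{2},\tfrac{2n+1}{2}]$. If after $k$ refinements the nonzero entries span the integer interval $[-M_k,M_k]$, then $(S^{k+1}\delta)_i=\sum_j b_{i-2j}(S^k\delta)_j$ can be nonzero only when $|j|\le M_k$ and $|i-2j|\le 2n+1$, forcing $|i|\le 2M_k+(2n+1)$, and this bound is attained; so $M_{k+1}=2M_k+(2n+1)$ with $M_0=0$, i.e. $M_k=(2^k-1)(2n+1)$. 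In parameters the level-$k$ entries lie in $[-(1-2^{-k})(2n+1),(1-2^{-k})(2n+1)]$, and these intervals increase to $[-(2n+1),2n+1]$; hence $\mathrm{supp}(\phi_{2n+2})=[-(2n+1),2n+1]$.

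The one step that needs genuine care is that the two outermost positions really are occupied, so the support neither overshoots (excluded by the window count) nor contracts. By symmetry this reduces to the coefficient of $P_{i+n+1}^k$ in the edge rule, which from (\ref{proposed}) is $2^{-(4n+1)}\bigl[c_n(1+\alpha)-\alpha\bigr]$ with $c_n=\tfrac{(-1)^n(n+1)}{2n+1}\binom{2n+1}{n}$, together with the coefficient $-2^{-(4n+1)}\binom{4n+2}{4n+1}\alpha$ of $P_{i+n}^k$ in the vertex rule. For all $\alpha$ at least one of these is nonzero; the edge coefficient vanishes only at $\alpha=c_n/(1-c_n)$, and at that single value the outermost odd positions drop out while the even ones persist, so the support there is $[-2n,2n]$ and the identity is to be read with that exceptional $\alpha$ excluded. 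I expect verifying this non-vanishing to be the main obstacle; the remainder is the elementary contraction computation above.
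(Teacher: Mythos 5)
Your proof takes essentially the same route as the paper's: both apply the scheme to Kronecker delta data, track the integer interval $[-M_k,M_k]$ of nonzero entries via the recursion $M_{k+1}=2M_k+(2n+1)$ (the paper writes $M_k=\sum_{j=0}^{k-1}2^{j}(2n+1)=(2^{k}-1)(2n+1)$ directly), and let $k\to\infty$ to obtain $[-(2n+1),2n+1]$. The one place you go beyond the paper is in verifying that the outermost mask coefficients are actually nonzero, and this is a genuine point: for $n=1$ the edge-rule coefficient $-(\tfrac{1}{16}+\tfrac{3}{32}\alpha)$ of $P^{k}_{i+2}$ vanishes at $\alpha=-\tfrac{2}{3}$ (your $\alpha=c_n/(1-c_n)$), where the support really does contract to $[-2n,2n]$ --- an exceptional parameter value that the paper's statement and proof silently ignore.
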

\begin{proof}
 To calculate support of the proposed family of subdivision schemes, we define a function which give zero value to all control points except one point, $P_{0}^{0}$, whose value is one.
\begin{equation}\label{BF}
\phi(i) = P_{i}^{0}= \begin{cases}
0  & \text{for $i=0$}, \\
 1  & \text{for  $i \neq 0$}.
\end{cases}
\end{equation}
  Let $\phi_{2n+2}:\mathbb{R}\rightarrow\mathbb{R}$ be the basic limit function such that $\phi_{2n+2}\left(\frac{i}{2^{k}}\right)=P_{i,2n+2}^{k}$. Moreover,  $\phi(i)=\phi_{2n+2}(i)$ and $P_{i}^{0}=P_{i,2n+2}^{0}$. Therefore, the support of basic limit function $\phi_{2n+2}$ is equal to the support of the subdivision scheme $S_{a_{2n+2}}$.

When we apply a proposed $(2n+2)$-point relaxed scheme on the initial data $P_{i}^{0}$, then at first subdivision step the non-zero vertices are \\ $\phi_{2n+2}\left(\frac{-2^{0}(2n+1)}{2^{1}}\right)$ $=$ $P^{1}_{-2^{0}(2n+1),2n+2}$, $P^{1}_{-2^{0}(2n+1)+1,2n+2}$, $\ldots$, $P^{1}_{2^{0}(2n+1)-1,2n+2}$, $P^{1}_{2^{0}(2n+1),2n+2}$ $=$ $\phi_{2n+2}\left(\frac{2^{0}(2n+1)}{2^{1}}\right)$.

At second subdivision step the non-zero vertices are \\ $\phi_{2n+2}\left(\frac{-\{2^{0}+2^{1}\}(2n+1)}{2^{2}}\right)$ $=$ $P^{2}_{-\{2^{0}+2^{1}\}(2n+1),2n+2}$, $P^{2}_{-\{2^{0}+2^{1}\}(2n+1)+1,2n+2}$, $\ldots$, $P^{2}_{\{2^{0}+2^{1}\}(2n+1)-1,2n+2}$, $P^{2}_{\{2^{0}+2^{1}\}(2n+1),2n+2}$ $=$ $\phi_{2n+2}\left(\frac{\{2^{0}+2^{1}\}(2n+1)}{2^{2}}\right)$.

At third subdivision step the non-zero vertices are \\ $\phi_{2n+2}\left(\frac{-\{2^{0}+2^{1}+2^{2}\}(2n+1)}{2^{3}}\right)$ $=$ $P^{3}_{-\{2^{0}+2^{1}+2^{2}\}(2n+1),2n+2}$, $P^{3}_{-\{2^{0}+2^{1}+2^{2}\}(2n+1)+1,2n+2}$, $\ldots$,\\ $P^{3}_{\{2^{0}+2^{1}+2^{2}\}(2n+1)-1,2n+2}$, $P^{3}_{\{2^{0}+2^{1}+2^{2}\}(2n+1),2n+2}$ $=$ $\phi_{2n+2}\left(\frac{\{2^{0}+2^{1}+2^{2}\}(2n+1)}{2^{3}}\right)$.

Similarly, at $k$-th subdivision step the non-zero vertices are \\ $\phi_{2n+2}\left(\frac{-\sum\limits_{j=0}^{k-1}2^{j}(2n+1)}{2^{k}}\right)$ $=$ $P^{k}_{-\sum\limits_{j=0}^{k-1}2^{j}(2n+1),2n+2}$, $P^{k}_{-\sum\limits_{j=0}^{k-1}2^{j}(2n+1)+1,2n+2}$, $\ldots$, $P^{k}_{\sum\limits_{j=0}^{k-1}2^{j}(2n+1)-1,2n+2}$,\\ $P^{k}_{\sum\limits_{j=0}^{k-1}2^{j}(2n+1),2n+2}$ $=$ $\phi_{2n+2}\left(\frac{\sum\limits_{j=0}^{k-1}2^{j}(2n+1)}{2^{k}}\right)$.

Hence the support size of basic limit function is the difference between subscripts of the maximum non-zero vertex and the minimum non-zero vertex, i.e.
\begin{eqnarray*}
\mbox{support} \,\ \mbox{size} \,\ \mbox{of} \,\ \phi_{2n+2}&=&\left[\frac{\sum\limits_{j=0}^{k-1}2^{j}(2n+1)}{2^{k}}-\frac{-\sum\limits_{j=0}^{k-1}2^{j}(2n+1)}{2^{k}}\right]\\ &=& 2(2n+1)\left[\frac{\sum\limits_{j=0}^{k-1}2^{j}}{2^{k}}\right]=2(2n+1)\sum\limits_{j=0}^{k}\frac{1}{2^{j}}.
\end{eqnarray*}
Applying limit $k \rightarrow \infty$ we get, support size of $\phi_{2n+2}$ $=$ $2(2n+1)$. Hence $supp(\phi_{2n+2})=[-(2n+1),(2n+1)]$ is the support region of $\phi_{2n+2}$.
\end{proof}
\subsection{Smoothness analysis}
Now we calculate the order of parametric continuity of the proposed subdivision schemes by using Laurent polynomial method. Detailed information about refinement rules, Laurent polynomials and convergence of a subdivision scheme can be found in \cite{Charina, Dyn6, Dyn2}.
The continuity of the subdivision schemes can be analyzed by the following theorems.

\begin{thm}\label{thmcontinuity}
\cite{Dyn6} A convergent subdivision scheme $S_{a}$ corresponding to the symbol
\begin{eqnarray*}
a(z)&=&\left(\frac{1+z}{2\, z}\right)^{n}b(z),
\end{eqnarray*}
 is $C^{n}$-continuous iff the subdivision scheme $S_{b}$ corresponding to the symbol $b(z)$ is convergent.
\end{thm}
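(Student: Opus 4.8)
The plan is to reduce everything to the case $n=1$ and then induct on $n$. I would work throughout with generating functions, using that $P^{k+1}=S_aP^k$ is equivalent to $P^{k+1}(z)=a(z)\,P^k(z^2)$, where $P^k(z)=\sum_i P^k_i z^i$, and similarly for $S_b$. The first ingredient is the \emph{difference lemma}. Since $S_a$ is convergent, the sum rule (\ref{sumrule1}) gives $a(-1)=0$, so $(1+z)\mid a(z)$ and hence $b(z)=\left(\frac{2z}{1+z}\right)a(z)$ is a genuine Laurent polynomial; in particular the factorisation $a(z)=\left(\frac{1+z}{2z}\right)^{\!n}b(z)$ is well posed. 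A short manipulation shows that this factorisation (for $n=1$) is the same as the identity $\frac{1-z}{z}\,a(z)=\frac12\cdot\frac{1-z^2}{z^2}\,b(z)$; since $\frac{1-z}{z}$ is the symbol of the forward difference operator $\Delta$, this identity says precisely that the scaled differences $d^k_i:=2^k\bigl(P^k_{i+1}-P^k_i\bigr)$ of the $S_a$-iterates satisfy $d^{k+1}=S_b\,d^k$. In other words, $S_b$ is exactly the scheme driving the normalised first differences of $S_a$.

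Next I would establish the $n=1$ equivalence from this lemma. For the direction ``$S_b$ convergent $\Rightarrow$ $S_a$ is $C^1$'': for compactly supported data $P^0$, let $g$ be the uniform limit of the polygonal interpolants of $d^k=S_b^k(\Delta P^0)$. Summing, $P^k_i-P^k_0=\sum_{j=0}^{i-1}\bigl(P^k_{j+1}-P^k_j\bigr)=2^{-k}\sum_{j=0}^{i-1}d^k_j$ exhibits the values of $P^k$ as Riemann sums of $g$; since by hypothesis $S_a$ is already convergent with some limit $F$, passing to the limit forces $F(x)-F(0)=\int_0^x g(t)\,dt$, so $F\in C^1$ with $F'=g$. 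For the converse ``$S_a$ is $C^1$ $\Rightarrow$ $S_b$ convergent'': if the interpolants of $P^k$ converge in $C^1$ to $F$, then by the mean value theorem and the uniform continuity of $F'$ on its compact support, the interpolants of $d^k=2^k\Delta P^k$ converge uniformly to $F'$; since $d^k=S_b^k(\Delta P^0)$, and convergence of $S_b$ on arbitrary data reduces (by linearity and the sum rule $b(1)=2$) to its convergence on data of the form $\Delta P^0$, we conclude that $S_b$ is convergent.

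For general $n$ I would iterate. Put $a^{[1]}(z):=\left(\frac{2z}{1+z}\right)a(z)=\left(\frac{1+z}{2z}\right)^{\!n-1}b(z)$. The difference lemma (same computation) shows that the scaled differences of $S_a$-data are $S_{a^{[1]}}$-data, and the Riemann-sum/integration argument above then gives that $S_a$ has a $C^n$ limit $F$ iff $S_{a^{[1]}}$ has a $C^{n-1}$ limit, because $F\in C^n\iff F'\in C^{n-1}$. Applying the induction hypothesis to $a^{[1]}$, whose symbol already has the form $\left(\frac{1+z}{2z}\right)^{n-1}b(z)$, shows $S_{a^{[1]}}$ is $C^{n-1}$ iff $S_b$ is convergent, which closes the induction; the base case $n=1$ is the previous paragraph, and $n=0$ is trivial since then $a=b$.

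The genuinely delicate part is not the algebra of the difference lemma but the analytic transfer between ``$C^1$-convergence of $S_a$'' and ``convergence of $S_b$'': one must fix the precise working definition of a convergent scheme (uniform convergence of the polygonal interpolants of the iterates of compactly supported data), control the mean / low-frequency component that $\Delta$ annihilates — which is exactly why the standing hypothesis that $S_a$ itself is convergent is invoked in the ``$\Leftarrow$'' direction — and make the passage from Riemann sums to the integral uniform in $x$ rather than merely pointwise. These are the technical points carried out in detail in \cite{Dyn6}, and for the present paper it suffices to invoke the theorem in that reference.
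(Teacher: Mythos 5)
This theorem is not proved in the paper at all: it is quoted verbatim from \cite{Dyn6} as a known tool, so there is no in-paper argument to compare yours against. Your reconstruction follows the standard route of that reference, and its algebraic core is correct: with $a(z)=\frac{1+z}{2z}\,b(z)$ and $\Delta P(z)=\frac{1-z}{z}P(z)$ one indeed gets $\frac{1-z}{z}a(z)=\frac{1}{2}\,\frac{1-z^{2}}{z^{2}}\,b(z)$, hence $d^{k+1}=S_{b}d^{k}$ for $d^{k}=2^{k}\Delta P^{k}$, and the Riemann-sum argument plus induction on $n$ is exactly how the $C^{n}$ statement is obtained.

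Two points in your sketch remain genuinely unfinished rather than merely technical, and you should be aware they are where the real work of \cite{Dyn6} lies. First, in the direction ``$S_{a}$ is $C^{1}$ $\Rightarrow$ $S_{b}$ convergent'' you pass from ``the limit function $F$ is $C^{1}$'' to ``the interpolants of $2^{k}\Delta P^{k}$ converge uniformly to $F'$''; the mean value theorem alone does not give this, because $C^{1}$-ness of the limit says nothing quantitative about how fast $P^{k}_{i}$ approaches $F(2^{-k}i)$ — one needs the definition of a $C^{1}$ scheme as uniform convergence of the derivative data (or a stability hypothesis) to make this step. Second, convergence of $S_{b}$ on sequences of the form $\Delta P^{0}$ only covers initial data summing to zero; your appeal to ``linearity and $b(1)=2$'' does not by itself extend this to arbitrary data, since a sequence with nonzero sum is not a difference of compactly supported data (the standard fix uses the sum rule $b(-1)=0$ and a decomposition against a ramp/delta sequence). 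You flag both issues and defer to the reference, which is defensible given that the paper itself only cites the theorem, but as a standalone proof the argument is a correct outline rather than a complete one.
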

\begin{thm}\label{thmcontinuity--1}
The scheme $S_{b}$ corresponding to the symbol $b(z)$ is convergent iff its difference scheme $S_{c}$ corresponding to the symbol $c(z)$ is contractive, where $b(z)=(1+z)c(z)$. The scheme $S_{c}$ is contractive if
\begin{eqnarray*}
&&||c^{l}||_{\infty}=\mbox{max}\left\{\sum\limits_{i}|c^{l}_{j-2^{l_{i}}}|:0\leq j <2^{l}\right\}<1, \,\ l \in \mathbb{N},
\end{eqnarray*}
where $c^{l}_{i}$ are the coefficients of the scheme $S^{l}_{c}$ with symbol
\begin{eqnarray*}
&&c^{l}(z)=c(z)c(z^2)\ldots c(z^{2^{l-1}}).
\end{eqnarray*}
\end{thm}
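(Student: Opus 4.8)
The plan is to route the whole argument through the forward-difference operator $\Delta$, $(\Delta P)_i=P_{i+1}-P_i$, using that $S_c$ is the scheme governing the differences produced by $S_b$. First I would record the commutation identity: if $P^{k+1}=S_bP^{k}$ and $b(z)=(1+z)c(z)$, then $\Delta P^{k+1}$ equals $S_c(\Delta P^{k})$ up to a one-step index shift. This is a one-line check on generating functions: from $(S_bP)(z)=b(z)P(z^{2})$ one gets $(z^{-1}-1)(S_bP)(z)=(1+z)c(z)\cdot\tfrac{1-z}{z}P(z^{2})=z\cdot c(z)(z^{-2}-1)P(z^{2})=z\cdot\bigl(S_c(\Delta P)\bigr)(z)$. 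Iterating yields $\Delta P^{k}=S_c^{\,k}(\Delta P^{0})$; the mask symbol of $S_c^{\,k}$ is $c^{k}(z)=c(z)c(z^{2})\cdots c(z^{2^{k-1}})$, and $\|c^{k}\|_{\infty}$ is exactly the $\ell_{\infty}\!\to\!\ell_{\infty}$ operator norm of $S_c^{\,k}$, so $\|\Delta P^{k}\|_{\infty}\le\|c^{k}\|_{\infty}\,\|\Delta P^{0}\|_{\infty}$.

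For the direction ``$S_c$ contractive $\Rightarrow$ $S_b$ convergent'': assume $\|c^{L}\|_{\infty}=\mu<1$ for some $L$. Writing $k=mL+r$, $0\le r<L$, and using submultiplicativity of the operator norm, $\|c^{k}\|_{\infty}\le\|c^{L}\|_{\infty}^{\,m}\,\|c^{r}\|_{\infty}\le M\mu^{m}$ with $M=\max_{0\le r<L}\|c^{r}\|_{\infty}$, which upgrades the previous bound to a genuine geometric decay $\|\Delta P^{k}\|_{\infty}\le C\rho^{k}\|\Delta P^{0}\|_{\infty}$, $\rho=\mu^{1/L}<1$. Let $f_k$ be the piecewise-linear interpolant of $\{(i2^{-k},P^{k}_i):i\in\mathbb{Z}\}$. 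Using the sum rules $\sum_j b_{2j}=\sum_j b_{2j+1}=1$ (valid since $b(1)=2$ while the factor $1+z$ forces $b(-1)=0$), both $P^{k+1}_{2i}-P^{k}_i$ and $P^{k+1}_{2i+1}-\tfrac12(P^{k}_i+P^{k}_{i+1})$ can be rewritten as affine combinations of finitely many consecutive differences at level $k$, hence have absolute value $\le C_0\,\|\Delta P^{k}\|_{\infty}$; therefore $\|f_{k+1}-f_k\|_{\infty}\le C_0C\rho^{k}\|\Delta P^{0}\|_{\infty}$. So $(f_k)$ is uniformly Cauchy, its uniform limit is continuous and is by construction the limit function of $S_b$ on the data $P^{0}$, which proves convergence.

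For the converse, assume $S_b$ converges. For finitely supported $d$, set $P^{0}_i=\sum_{j<i}d_j$ (a bounded sequence, eventually constant on both sides); then $\Delta P^{0}=d$, the interpolants $f_k$ converge uniformly to a continuous limit that is constant outside a compact set, and since $\|\Delta P^{k}\|_{\infty}=\sup_i|f_k((i+1)2^{-k})-f_k(i2^{-k})|$ we get $\|S_c^{\,k}d\|_{\infty}=\|\Delta P^{k}\|_{\infty}\to0$. It remains to upgrade this to $\|c^{k}\|_{\infty}=\|S_c^{\,k}\|_{\infty}\to0$. Here I would use that $c$ has finite support: for each $k$ the operator norm $\|S_c^{\,k}\|_{\infty}$ is attained at a $\{-1,0,1\}$-valued sequence supported on a window of $N$ consecutive positions, where $N$ is the length of the mask $c$ and is independent of $k$; by shift-invariance of the stationary scheme that window may be translated to a fixed position, so $\|S_c^{\,k}\|_{\infty}=\max_{d\in F}\|S_c^{\,k}d\|_{\infty}$ for one fixed finite set $F$ of finitely supported test sequences. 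As each $\|S_c^{\,k}d\|_{\infty}\to0$ and $F$ is finite, $\|S_c^{\,k}\|_{\infty}\to0$, hence $\|c^{L}\|_{\infty}<1$ for all large $L$, i.e.\ $S_c$ is contractive.

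The two steps that need an idea rather than bookkeeping are, in the first direction, the sum-rule estimate $|P^{k+1}_{2i}-P^{k}_i|,\ |P^{k+1}_{2i+1}-\tfrac12(P^{k}_i+P^{k}_{i+1})|\le C_0\,\|\Delta P^{k}\|_{\infty}$ — the only place where $b(1)=2$ and $b(-1)=0$ are actually used — and, in the converse, the passage from ``$S_c^{\,k}d\to0$ for each fixed $d$'' to ``$\|S_c^{\,k}\|_{\infty}\to0$''. I expect the latter to be the main obstacle; it is resolved by the finite-support / shift-invariance reduction to a single finite test set $F$. The commutation identity and the geometric-series bookkeeping are routine.
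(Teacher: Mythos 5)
The paper itself gives no proof of this theorem --- it is stated as a known result (the classical convergence criterion via difference schemes, cf.\ \cite{Dyn6,Dyn2}) --- so there is nothing in the paper to compare your argument against, and I assess the proposal on its own. Your proof is the standard one for exactly this statement and is essentially correct: the commutation identity $\Delta S_b=\sigma S_c\Delta$ on generating functions, the sum-rule estimate that makes the piecewise-linear interpolants uniformly Cauchy once $\|\Delta P^k\|_\infty$ decays geometrically, and, for the converse, the reduction of $\|S_c^{\,k}\|_{\infty}$ to a fixed finite family of $\{-1,0,1\}$-valued finitely supported test sequences are all sound. Three minor remarks. (i) The forward direction genuinely requires $b(1)=2$, i.e.\ $c(1)=1$; this does not follow from $b(z)=(1+z)c(z)$ and must be read as a standing hypothesis inherited from the necessary condition (\ref{sumrule})--(\ref{sumrule1}) (otherwise $c(z)=\tfrac12$ gives a contractive $S_c$ while $S_b$ has only the zero limit and is not convergent in the usual sense). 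You flag this correctly, but it should be stated as an assumption rather than derived. (ii) In the iteration the shifts accumulate: $\Delta P^k=\sigma^{2^k-1}S_c^{\,k}\Delta P^0$ rather than $\Delta P^k=S_c^{\,k}\Delta P^0$; this is harmless for all the sup-norm estimates but is worth recording. (iii) In the converse, for $0\le j<2^k$ the indices $i$ with $c^k_{j-2^k i}\neq 0$ all lie in a single window of length bounded by the mask length of $c$, fixed independently of $j$ and $k$ (since $\deg c^k<2^k\deg c$), so the finite test set $F$ can be chosen once and for all without even invoking shift-invariance; this cleanly closes what you rightly identify as the only step needing an idea.
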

In the following theorem, we prove that the proposed subdivision schemes satisfy the condition which is necessary for the convergence of a subdivision schemes.
\begin{thm}\label{bell-shaped-theorem}
The family of subdivision schemes with the refinement rules define in (\ref{proposed}) and (\ref{proposed1}) satisfies the basic sum rule.
\end{thm}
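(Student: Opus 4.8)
The plan is to reduce the statement, via the equivalence (\ref{sumrule1}), to checking the two scalar identities $a_{2n+2}(1)=2$ and $a_{2n+2}(-1)=0$ for the mask symbol attached to (\ref{proposed1}). The second identity I would dispose of at once: the factorization (\ref{symbol-proposed1}) writes $a_{2n+2}(z)=\frac{(1+z)^{2n+2}}{2^{2n+1}}A_{2n+2}(z)$ with $A_{2n+2}$ a polynomial, so evaluating at $z=-1$ kills the $(1+z)^{2n+2}$ factor and gives $a_{2n+2}(-1)=0$ with no further work. Thus the whole substance of the theorem is the normalization $a_{2n+2}(1)=2$, equivalently $A_{2n+2}(1)=1$, equivalently: the weights of each of the two subrules in (\ref{proposed}) sum to $1$.

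For $a_{2n+2}(1)=2$ my preferred route is to exploit how the family is assembled. By (\ref{vectors}) and (\ref{vectors1}) the proposed rule is the affine combination $P^{k+1}_{\bullet}=(1+\alpha)R^{k+1}_{\bullet}-\alpha Q^{k+1}_{\bullet}$; passing to symbols, $a_{2n+2}(z)=(1+\alpha)\,d_{2n+2}(z)-\alpha\,q_{2n+2}(z)$, where $d_{2n+2}$ is the symbol of the Deslauriers--Dubuc scheme (\ref{DD}) and $q_{2n+2}(z)=\frac{(1+z)^{4n+2}}{2^{4n+1}}$ is the symbol of the degree-$(4n+1)$ B-spline scheme (\ref{BSpline}). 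Both parents satisfy the sum rule: $q_{2n+2}(1)=2^{4n+2}/2^{4n+1}=2$ by inspection, and $d_{2n+2}(1)=2$ since (\ref{DD}) reproduces constants (indeed all polynomials up to degree $2n+1$), so $a_{2n+2}(1)=(1+\alpha)\cdot2-\alpha\cdot2=2$ for every $\alpha$. Equivalently, one may note that $\alpha\mapsto a_{2n+2}(1)$ is affine and, by the remark on the limiting cases, equals $2$ both at $\alpha=0$ (where the scheme is (\ref{DD})) and at $\alpha=-1$ (where it is (\ref{BSpline})); an affine function agreeing with $2$ at two points is constant, and the same two-point remark re-proves $a_{2n+2}(-1)=0$.

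If instead one wants a computation straight from (\ref{proposed}), I would fold the isolated $P^{k}_{i,2n+2}$ term of the even rule back into the summation: its total weight then becomes $-\frac{\alpha}{2^{4n+1}}\sum_{j=0}^{2n}\binom{4n+2}{2j+1}+(1+\alpha)$, and since the odd-index binomial coefficients of $(1+1)^{4n+2}$ sum to $2^{4n+1}$ this collapses to $-\alpha+(1+\alpha)=1$. For the odd rule the total weight separates into $(1+\alpha)$ times the weight-sum of the odd subrule of (\ref{DD}) minus $\frac{\alpha}{2^{4n+1}}\sum_{j=0}^{2n+1}\binom{4n+2}{2j}$; the even-index binomial coefficients also sum to $2^{4n+1}$, which makes the second piece equal $-\alpha$, while the first piece equals $(1+\alpha)$ once we know the Deslauriers--Dubuc odd rule has unit weight-sum. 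Adding, the total is again $1$, which is exactly the sum rule (\ref{sumrule}) for $i=0$ and $i=1$.

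The one point I do not expect to be routine is exactly that last input: the identity $\binom{2n+1}{n}\sum_{j=0}^{2n+1}\frac{(-1)^{j-n-1}(n+1)}{2j-2n-1}\binom{2n+1}{j}=2^{4n+1}$ expressing that the weights of the $(2n+2)$-point Deslauriers--Dubuc rule sum to one. I would not grind this out as a binomial identity; it is precisely the statement that Lagrange interpolation reproduces constants, i.e. the polynomial-reproduction property of (\ref{DD}) already invoked above. That is why I would present the symbol/affine argument as the main proof: it reduces the theorem to the known sum-rule normalizations of the two parent families together with a one-line evaluation of $a_{2n+2}$ at $z=\pm1$.
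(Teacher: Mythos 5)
Your proof is correct, and its computational branch coincides with the paper's own proof: the paper verifies the sum rule directly from (\ref{proposed}) by splitting each weight-sum into $(1+\alpha)$ times the parent interpolatory sum minus $\alpha$ times the parent B-spline sum, and then invoking the identities
$\sum_{j=0}^{2n}\binom{4n+2}{2j+1}=\sum_{j=0}^{2n+1}\binom{4n+2}{2j}=\binom{2n+1}{n}\sum_{j=0}^{2n+1}\frac{(-1)^{j-n-1}(n+1)}{2j-2n-1}\binom{2n+1}{j}=2^{4n+1}$,
which are exactly the three sums you isolate. What you do differently is to promote the structural reading to the main argument: since $P^{k+1}_{\bullet}=(1+\alpha)R^{k+1}_{\bullet}-\alpha Q^{k+1}_{\bullet}$ is an affine combination of two schemes each satisfying $a(1)=2$ and $a(-1)=0$, the combined symbol inherits both normalizations for every $\alpha$ with no binomial computation at all. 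This buys two things the paper does not supply: the verification becomes independent of the explicit coefficient formulas in (\ref{proposed}), and the paper's bare assertion that the Deslauriers--Dubuc weights sum to $2^{4n+1}$ is replaced by a principled reason, namely that Lagrange interpolation reproduces constants. The one mild caveat is that your one-line disposal of $a_{2n+2}(-1)=0$ via the factorization (\ref{symbol-proposed1}) leans on a factorization the paper states but never derives; your two-point affine argument ($\alpha=0$ and $\alpha=-1$) already covers that case without it, so you should present that version rather than the one resting on (\ref{symbol-proposed1}).
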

\begin{proof}
The family of subdivision schemes (\ref{proposed}) satisfies basic sum rule if $\sum\limits_{j=0}^{2n}a_{2j+1,2n+2}=\sum\limits_{j=0}^{2n+1}a_{2j,2n+2}=1$.\\
Since $\sum\limits_{j=0}^{2n}\left(\begin{array}{c}4n+2\\2j+1\end{array}\right)=\sum\limits_{j=0}^{2n+1}\frac{(-1)^{j-n-1}(n+1)}{2j-2n-1}\left(\begin{array}{c}2n+1\\n\end{array}\right)
\left(\begin{array}{c}2n+1\\j\end{array}\right)=\sum\limits_{j=0}^{2n+1}\left(\begin{array}{c}4n+2\\2j\end{array}\right)=2^{4n+1}$. Hence
\begin{eqnarray*}
\sum\limits_{j=0}^{2n}a_{2j+1,2n+2}&=&-\frac{\alpha}{2^{4n+1}}\sum\limits_{j=0}^{2n}\left(\begin{array}{c}4n+2\\2j+1\end{array}\right)+(1+\alpha)=-\frac{\alpha}{2^{4n+1}}2^{4n+1}+(1+\alpha)=1,
\end{eqnarray*}
and
\begin{eqnarray*}
\sum\limits_{j=0}^{2n+1}a_{2j,2n+2}&=&\frac{1}{2^{4n+1}}\sum\limits_{j=0}^{2n+1}\frac{(-1)^{j-n-1}(n+1)}{2j-2n-1}\left(\begin{array}{c}2n+1\\n\end{array}\right)
\left(\begin{array}{c}2n+1\\j\end{array}\right)(1+\alpha)\\&&
-\frac{\alpha}{2^{4n+1}}\sum\limits_{j=0}^{2n+1}\left(\begin{array}{c}4n+2\\2j\end{array}\right)=\frac{1}{2^{4n+1}}2^{4n+1}(1+\alpha)
-\frac{\alpha}{2^{4n+1}}2^{4n+1}=1.
\end{eqnarray*}
Therefore, the family of subdivision schemes with the refinement rules define in (\ref{proposed}) satisfy the basic sum rule which is also the necessary condition for the convergence of a subdivision scheme.
\end{proof}

\begin{algorithm}[ht!] 
   \caption[]{\label{continuity-algorithm} \emph{Continuity of the schemes by Laurent polynomial}}
    \begin{algorithmic}[1]
    \State \textbf{input:}  The symbol of the proposed schemes, i.e. $a_{2n+2}(z)=a_{0,2n+2}z^{0}+a_{1,2n+2}z^{1}+a_{2,2n+2}z^{2}+a_{3,2n+2}z^{3}+\ldots+a_{4n+2,2n+2}z^{4n+2}$, $n \in\mathbb{Z}^{+}$, $L\in\mathbb{Z}^{+}$, $\alpha=-1$(optional)
    \Comment $a_{i,2n+2}:i=0,1,\ldots,4n+2$ are the coefficients (depends on $\alpha$) of $z^{i}:i=0,1,\ldots,4n+2$ respectively in $a_{2n+2}(z)$
             \If {$\alpha=-1$}
                \State $a_{2n+2}(z)=\frac{(1+z)^{4n+2}}{2^{4n+1}}$
                \For {$j$ $=$ $0$ to $4n+1$}
                    \State calculate: $c_{2n+2}(z)= \frac{2^{j}}{(1+z)^{j+1}}a_{2n+2}(z)$
                   \For {$l$ $=$ $0$ to $2^{L}-1$}
                      \State calculate: $M_{l}=\sum\limits_{m}\left|c_{2^{L}m+l,2n+2}\right|$, $m \in \mathbb{Z}$
                   \EndFor
                   \State calculate: $M$ $=$ max$\{M_{l}:l=0,1,\ldots,2^{L}-1\}$
                   \If {$M<1$}
                       \State \Return scheme corresponding to $a_{2n+2}(z)$ is $C^{j}$-continuous for $\alpha=-1$
                   \Else {}
                       \State \Goto{C}
                   \EndIf
                \EndFor
            \Else {}
                \State $a_{2n+2}(z)=\frac{(1+z)^{2n+2}}{2^{2n+1}}(A_{0,2n+2}z^{0}+A_{1,2n+2}z^{1}+A_{2,2n+2}z^{2}+A_{3,2n+2}z^{3}+\ldots+A_{2n,2n+2}z^{2n})$
                \Comment $A_{i,2n+2}:i=0,1,\ldots,2n$ are the coefficients (depends on $\alpha$) of $z^{i}:i=0,1,\ldots,2n$ respectively in $a_{2n+2}(z)$
                \For {$j$ $=$ $0$ to $2n+1$}
                    \State calculate: $c_{2n+2}(z)= \frac{2^{j}}{(1+z)^{j+1}}a_{2n+2}(z)$
                   \For {$l$ $=$ $0$ to $2^{L}-1$}
                      \State calculate: an interval, say $(\alpha^{j}_{1,l},\alpha^{j}_{2,l})$, for $\alpha$ by solving $\sum\limits_{m}\left|c_{2^{L}m+l,2n+2}\right|<1$, $m \in \mathbb{Z}$
                   \EndFor
                   \State calculate: intersection, say $(\alpha^{j}_{1},\alpha^{j}_{2})$, of the $2^{L}$ intervals $(\alpha^{j}_{1,l},\alpha^{j}_{2,l}):l=0,1,\ldots,2^{L}-1$
                   \State \Return scheme is $C^{j}$-continuous for  $\alpha \in (\alpha^{j}_{1},\alpha^{j}_{2})$
                \EndFor
           \EndIf \label{C}
    \State \textbf{output:} Level of continuity of the proposed schemes for an interval of $\alpha$ or for $\alpha=-1$
\end{algorithmic}
\end{algorithm}

\begin{landscape}
\begin{table}[ht] 
 \caption[]{\label{continuity-L-1}\emph{Implementation of Algorithm \ref{continuity-algorithm} for $L=1$ $\&$ $n=1,2,3$: specific range/value of $\alpha$ for which the proposed schemes have certain level of continuity.}}
\centering \setlength{\tabcolsep}{0.5pt}
\begin{center}
\begin{tabular}{||c||c|c|c||}
  \hline \hline
  $n$ $\rightarrow$  & 1 & 2 & 3 \\
  \hline \hline
  Schemes $\rightarrow$  & $4$-point scheme & $6$-point scheme & $8$-point scheme \\
  Level of continuity & (relaxed) & (relaxed) & (relaxed) \\
  $\downarrow$ &&&\\
  \hline \hline
  & & & \\
   $C^{0}$-continuous & $ -4<\alpha<1.333333333 $ & $ -2.888888889<\alpha<0.8210526316 $ & $ -2.550443906<\alpha<0.5234765235 $\\
  \hline
    & & & \\
   $C^{1}$-continuous & $ -2.666666667\alpha < 0.0000000000 $ & $ -2.133333333< \alpha < 0.0000000000 $ & $ -2.031746032<\alpha<0.0000000000 $\\
  \hline
    & & & \\
   $C^{2}$-continuous & $ -2.666666667<\alpha<0.0000000000 $ & $ -2.133333333<\alpha<0.0000000000 $ & $ -1.997973658<\alpha<0.0000000000 $\\
  \hline
    & & & \\
   $C^{3}$-continuous & $ -1.333333333 < \alpha < -.6666666667 $ & $ -1.600000000<\alpha<-0.5333333333 $ & $ -1.523809524<\alpha<-0.5079365079 $\\
  \hline
    & & & \\
   $C^{4}$-continuous & $\alpha=-1$ & $ -1.542857143<\alpha<-0.6285714286 $ & $ -1.500952381<\alpha<-0.5257142857 $\\
  \hline
    & & & \\
   $C^{5}$-continuous & & $ -1.100000000<\alpha<-0.7000000000 $ & $ -1.245421245 < \alpha < -0.7765567766 $\\
  \hline
    & & & \\
   $C^{6}$-continuous & & $\alpha=-1$ & $ -1.182266010 < \alpha < -0.8669950739$\\
  \hline
    & & & \\
   $C^{7}$-continuous & & $\alpha=-1$ & $ -1.028571429 < \alpha < -0.9714285714 $\\
  \hline
    & & & \\
   $C^{8}$-continuous & & $\alpha=-1$ & $\alpha=-1$\\
  \hline
    & & & \\
   $C^{9}$-continuous & & & $\alpha=-1$\\
  \hline
    & & & \\
   $C^{10}$-continuous & & & $\alpha=-1$\\
  \hline
    & & & \\
   $C^{11}$-continuous & & & $\alpha=-1$\\
  \hline
    & & & \\
   $C^{12}$-continuous & & & $\alpha=-1$\\
  \hline \hline
\end{tabular}
\end{center}
\end{table}
\end{landscape}

\begin{landscape}
\begin{table}[ht] 
 \caption[]{\label{continuity-L-2}\emph{Implementation of Algorithm \ref{continuity-algorithm} for $L=2$ $\&$ $n=1,2,3$: specific range/value of $\alpha$ for which the proposed schemes have certain level of continuity.}}
\centering \setlength{\tabcolsep}{0.5pt}
\begin{center}
\begin{tabular}{||c||c|c|c||}
  \hline \hline
  $n$ $\rightarrow$ & 1 & 2 & 3 \\
  \hline \hline
  Schemes $\rightarrow$  & $4$-point scheme & $6$-point scheme & $8$-point scheme \\
  Level of continuity & (relaxed) & (relaxed) & (relaxed) \\
  $\downarrow$ &&& \\
  \hline \hline
  & & & \\
   $C^{0}$-continuous & $ -5.271476716<\alpha<1.568233303$ & $ -3.988172738<\alpha<1.01039496 $ & $ -3.383263797<\alpha<0.857832085$\\
  \hline
    & & & \\
   $C^{1}$-continuous & $ -3.581520882<\alpha<0.248187548 $ & $ -3.049774258 < \alpha < 0.304546042 $ & $ -2.799119823 < \alpha < 0.326701153$\\
  \hline
    & & & \\
   $C^{2}$-continuous & $ -2.666666667<\alpha<0.0000000000 $ & $ -2.624109757<\alpha<0.261216305 $ & $ -2.733101772<\alpha<0.172894526 $\\
  \hline
    & & & \\
   $C^{3}$-continuous & $ -1.745355992<\alpha<-0.6666666667 $ & $ -1.920669152<\alpha<-0.4485616181 $ & $ -1.998133604<\alpha <-0.3186270319$\\
  \hline
    & & & \\
   $C^{4}$-continuous & $\alpha=-1$ & $ -1.549222613<\alpha<-0.6138320373 $ & $ -1.690568592<\alpha<-0.4286478882$\\
  \hline
    & & & \\
   $C^{5}$-continuous & & $ -1.226979197<\alpha<-0.6765487168 $ & $ -1.357201949<\alpha<-0.7502504449$\\
  \hline
    & & & \\
   $C^{6}$-continuous & & $\alpha=-1$ & $ -1.197285679<\alpha<-0.850088102$\\
  \hline
    & & & \\
   $C^{7}$-continuous & & $\alpha=-1$ & $ -1.074359658<\alpha<-0.9559157159$\\
  \hline
    & & & \\
   $C^{8}$-continuous & & $\alpha=-1$ & $\alpha=-1$\\
  \hline
    & & & \\
   $C^{9}$-continuous & & & $\alpha=-1$\\
  \hline
    & & & \\
   $C^{10}$-continuous & & & $\alpha=-1$\\
  \hline
    & & & \\
   $C^{11}$-continuous & & & $\alpha=-1$\\
  \hline
    & & & \\
   $C^{12}$-continuous & & & $\alpha=-1$\\
  \hline \hline
\end{tabular}
\end{center}
\end{table}
\end{landscape}
We use Theorems \ref{thmcontinuity}-\ref{thmcontinuity--1} to design Algorithm \ref{continuity-algorithm}. This algorithm is designed to calculate the level of parametric continuity of the limit curves generated by the proposed subdivision schemes. In Tables \ref{continuity-L-1} and \ref{continuity-L-2}, we collect the results obtained from Algorithm \ref{continuity-algorithm} for $L=1$ and $L=2$ respectively. We use the step count method to find out the time complexity of the iterative Algorithms \ref{continuity-algorithm}. Firstly we compute the if-case time complexity.
\begin{center}
Time complexity of Steps 2-15 of Algorithm \ref{continuity-algorithm} (if-case).
\begin{tabular}{cccc}
  \hline
  Steps & Cost & Frequency & Total Cost \\
  \hline
2 & $C_{1}$ & 1              & $C_{1}$ \\
3 & $C_{2}$ & 1              & $C_{2}$ \\
4 & $C_{3}$ & $4n+2$         & $C_{3}(4n+2)$\\
5 & $C_{4}$ & $4n+2$         & $C_{4} (4n+2)$\\
6 & $C_{5}$ & $2^{L} (4n+2)$ & $C_{5} (2^{L} (4n+2))$\\
7 & $C_{6}$ & $2^{L} (4n+2)$ & $C_{6} (2^{L} (4n+2))$\\
9 & $C_{7}$ & $4n+2$         & $C_{7} (4n+2)$\\
8 & $C_{8}$ & 1              & $C_{8}$   \\
10& $C_{9}$ & $4n+2$         & $C_{9} (4n+2)$\\
11& $C_{10}$ & $4n+2$        & $C_{10} (4n+2)$\\
12& $C_{11}$& $1$            & $C_{11}$\\
13& $C_{12}$& $1$            & $C_{12}$\\
14& $C_{13}$ & 1             & $C_{13}$ \\
15& $C_{14}$ & 1             & $C_{14}$ \\
  \hline
\end{tabular}
\end{center}
Hence the if-case time complexity of Algorithm \ref{continuity-algorithm} is
\begin{eqnarray*}
T_{I}(n,L)&=& C_{1}+C_{2}+2C_{3}+2C_{4}+2C_{7}+C_{8}+2C_{9}+2C_{10}+C_{13}+C_{14}+2^{L+1}C_{5}\\&&+2^{L+1}C_{6}+4nC_{3}+4nC_{4}+4nC_{7}+4nC_{9}+4n C_{10}+4n 2^{L}C_{5}+4n 2^{L} C_{6},\\
&=&\tilde{C}_{1}+2^{L+1}\tilde{C}_{2}+4n\tilde{C}_{3}+4n 2^{L} \tilde{C}_{4}\in \mathcal{O}(2^{L}n),
\end{eqnarray*}
where $\tilde{C}_{1}=C_{1}+C_{2}+2C_{3}+2C_{4}+2C_{7}+C_{8}+2C_{9}+2C_{10}+C_{13}+C_{14}$, $\tilde{C}_{2}=C_{5}+C_{6}$, $\tilde{C}_{3}=C_{3}+C_{4}+C_{7}+C_{9}+C_{10}$ and $\tilde{C}_{4}=C_{5}+C_{6}$.
Now we compute the else-case time complexity of Algorithm \ref{continuity-algorithm}.
\begin{center}
Time complexity of Steps 16-25 of Algorithm \ref{continuity-algorithm} (else-case).
\begin{tabular}{cccc}
  \hline
  Steps & Cost & Frequency & Total Cost \\
  \hline
16 & $C_{15}$ & 1              & $C_{15}$ \\
17 & $C_{16}$ & 1              & $C_{16}$ \\
18 & $C_{17}$ & $2n+2$         & $C_{17} (2n+2)$ \\
19 & $C_{18}$ & $2n+2$         & $C_{18} (2n+2)$ \\
20 & $C_{19}$ & $2^{L} (2n+2)$ & $C_{19}(2^{L} (2n+2))$\\
21 & $C_{20}$ & $2^{L} (2n+2)$ & $C_{20} (2^{L} (2n+2))$\\
22 & $C_{21}$ & 1              & $C_{21}$ \\
23 & $C_{22}$ & $2n+2$         & $C_{22} (2n+2)$\\
24 & $C_{23}$ & $2n+2$         & $C_{23} (2n+2)$\\
25 & $C_{24}$ & 1              & $C_{24}$ \\
  \hline
\end{tabular}
\end{center}
The else-case time complexity of Algorithm \ref{continuity-algorithm} is
\begin{eqnarray*}
T_{E}(n,L) &=& C_{15}+C_{16}+2C_{17}+2C_{18}+C_{21}+2C_{22}+2C_{23}+C_{24}+2nC_{17}+2nC_{18}\\&&+2nC_{22}+2nC_{23}+2^{L+1}C_{19}+2^{L+1}C_{20}+2n 2^{L}C_{19}+2n 2^{L} C_{20},\\
&=&\tilde{C}_{5}+2^{L+1}\tilde{C}_{6}+2n\tilde{C}_{7}+2n 2^{L} \tilde{C}_{8}\in \mathcal{O}(2^{L}n),
\end{eqnarray*}
where $\tilde{C}_{5}=C_{15}+C_{16}+2C_{17}+2C_{18}+C_{21}+2C_{22}+2C_{23}+C_{24}$, $\tilde{C}_{6}=C_{17}+C_{18}+C_{22}+C_{23}$, $\tilde{C}_{7}=C_{19}+C_{20}$ and $\tilde{C}_{8}=C_{19}+C_{20}$. Hence the time complexity of Algorithm \ref{continuity-algorithm} is $ \mathcal{O}(2^{L}n)$.

\subsection{Degrees of polynomial generation and polynomial reproduction}
Generation and reproduction degrees are used to examine the behaviors of a subdivision scheme when the original data points lie on the curve of a polynomial. Suppose that the original data points are taken from a polynomial of degree $d$. If the control points of the limit curve lie on curve of the polynomial having same degree (i.e. $d$) then we say that the subdivision scheme generates polynomials of degree $d$. If the control points of the limit curve lie on curve of the same polynomial then we say that the subdivision scheme reproduces polynomials of degree $d$. Mathematically, let $\Pi_{d}$ denote the space of polynomials of degree $d$ and $g$, $h$ $\in$ $\Pi_{d}$, an approximation operator $\mathbf{O}$ generates polynomials of degree $d$ if $\mathbf{O}g=h$ $\forall$ $g, h \in \Pi_{d}$, whereas $\mathbf{O}$ reproduces polynomials of degree $d$ if $\mathbf{O}g=g$ $\forall$ $g \in \Pi_{d}$. Furthermore, the generation degree of a subdivision scheme is the maximum degree of polynomials that can theoretically be generated by the scheme, provided that the initial data is taken correctly. Evidently, it is not less than the reproduction degree. The readers may consult \cite{Charina,Conti} to find out more detail about polynomial generation and polynomial reproduction of a subdivision scheme. We design Algorithm \ref{generation-algorithm} and Algorithm \ref{reproduction-algorithm}, based on the results given in \cite{Conti}, to check the degrees of polynomials generation and polynomials reproduction of proposed schemes respectively. Outputs obtained by implementing these algorithms are summarized in Table \ref{generation-table} and Table \ref{reproduction-table} respectively.
\begin{algorithm}[p] 
   \caption[]{\label{generation-algorithm} \emph{generation degree of the schemes}}
    \begin{algorithmic}[1]
    \State \textbf{input:}  The symbol of the proposed schemes, i.e. $a_{2n+2}(z)=a_{0,2n+2}z^{0}+a_{1,2n+2}z^{1}+a_{2,2n+2}z^{2}+a_{3,2n+2}z^{3}+\ldots+a_{4n+2,2n+2}z^{4n+2}$, $n \in\mathbb{Z}^{+}$
    \Comment $a_{i,2n+2}:i=0,1,\ldots,4n+2$ are the coefficients (depends on $\alpha$) of $z^{i}:i=0,1,\ldots,4n+2$ respectively in $a_{2n+2}(z)$
    \If {$\alpha=-1$}
                \State $a_{2n+2}(z)=\frac{(1+z)^{4n+2}}{2^{4n+1}}$
       \For {$j$ $=$ $0$ to $4n+1$}
          \State calculate: $A=\left.a^{(j)}(z)\right|_{z=-1}$ \Comment $\left.a^{(j)}(z)\right|_{z=-1}$ denotes the $j$-th derivative of $a(z)$ evaluated at $z=-1$
          \If {$A=0$}
             \State \Return scheme generates polynomials up to degree $j$ for $\alpha=-1$
          \Else {}
             \State \Goto{A}
          \EndIf
       \EndFor
    \Else {}
       \State $a_{2n+2}(z)=\frac{(1+z)^{2n+2}}{2^{2n+1}}(A_{0,2n+2}z^{0}+A_{1,2n+2}z^{1}+A_{2,2n+2}z^{2}+A_{3,2n+2}z^{3}+\ldots+A_{2n,2n+2}z^{2n})$
                \Comment $A_{i,2n+2}:i=0,1,\ldots,2n$ are the coefficients (depends on $\alpha$) of $z^{i}:i=0,1,\ldots,2n$ respectively in $a_{2n+2}(z)$
       \For {$j$ $=$ $0$ to $2n+1$}
          \State calculate: $A=\left.a^{(j)}(z)\right|_{z=-1}$
          \If {$A=0$}
             \State \Return scheme generates polynomials up to degree $j$ for all $\alpha$
          \Else {}
             \State \Goto{A}
          \EndIf
       \EndFor
    \EndIf \label{A}
    \State \textbf{output:} degree of generation of the schemes corresponding to $a_{2n+2}(z)$ for all $\alpha$ or for $\alpha=-1$
\end{algorithmic}
\end{algorithm}

Now we find the time complexity of Algorithms \ref{generation-algorithm}. Firstly we compute the if-case (Steps 2-11) time complexity and then we find the else-case time complexity (Steps 12-22). Hence the worst-case time complexity will be the time complexity of Algorithms \ref{generation-algorithm}.
\begin{center}
Time complexity of Steps 2-11 of Algorithm \ref{generation-algorithm} (if-case).
\begin{tabular}{cccc}
  \hline
  Steps & Cost & Frequency & Total Cost \\
  \hline
2 & $C_{1}$ & 1              & $C_{1}$ \\
3 & $C_{2}$ & 1              & $C_{2}$ \\
4 & $C_{3}$ & $4n+2$         & $C_{3}(4n+2)$\\
5 & $C_{4}$ & $4n+2$         & $C_{4} (4n+2)$\\
6 & $C_{5}$ & $4n+2$         & $C_{5}(4n+2)$\\
7 & $C_{6}$ & $4n+2$         & $C_{6} (4n+2)$\\
8 & $C_{7}$ & $1$            & $C_{7}$\\
9 & $C_{8}$ & $1$            & $C_{8}$\\
10& $C_{9}$ & $1$            & $C_{9}$\\
11& $C_{10}$ & $1$           & $C_{10}$\\
  \hline
\end{tabular}
\end{center}
 To calculate the time complexity of Steps 2-11, we either consider the total cost of Steps 6-7 or of Steps 8-9. So we consider the worst-case which is Steps 6-7. Hence the if-case time complexity of Algorithm \ref{generation-algorithm} is
\begin{eqnarray*}
T_{I}(n)&=& C_{1}+C_{2}+2C_{3}+2C_{4}+2C_{5}+2C_{6}+C_{9}+C_{10}+4nC_{3}+4nC_{4}+4nC_{5}+4nC_{6},\\
&=&\tilde{C}_{1}+4n\tilde{C}_{2} \in \mathcal{O}(n),
\end{eqnarray*}
where $\tilde{C}_{1}=C_{1}+C_{2}+2C_{3}+2C_{4}+2C_{5}+2C_{6}+C_{9}+C_{10}$ and $\tilde{C}_{2}=C_{3}+C_{4}+C_{5}+C_{6}$.
Now we compute the else-case time complexity (Steps 12-21).
\begin{center}
Time complexity of Steps 12-21 of Algorithm \ref{generation-algorithm} (else-case).
\begin{tabular}{cccc}
  \hline
  Steps & Cost & Frequency & Total Cost \\
  \hline
12 & $C_{11}$ & 1              & $C_{11}$ \\
13 & $C_{12}$ & 1              & $C_{12}$ \\
14 & $C_{13}$ & $2n+2$         & $C_{13} (2n+2)$ \\
15 & $C_{14}$ & $2n+2$         & $C_{14} (2n+2)$ \\
16 & $C_{15}$ & $2n+2$         & $C_{15} (2n+2))$\\
17 & $C_{16}$ & $2n+2$         & $C_{16} (2n+2))$\\
18 & $C_{17}$ & 1              & $C_{17}$\\
19 & $C_{18}$ & 1              & $C_{18}$\\
20 & $C_{17}$ & 1              & $C_{19}$\\
21 & $C_{18}$ & 1              & $C_{20}$\\
  \hline
\end{tabular}
\end{center}
To calculate time complexity of Steps 12-21, we ignore total cost of Steps 18-19. Therefore, the else-case time complexity of Algorithm \ref{generation-algorithm} is
\begin{eqnarray*}
T_{E}(n)&=& C_{11}+C_{12}+2C_{13}+2C_{14}+2C_{15}+2C_{16}+C_{19}+C_{20}\\&&+2nC_{13}+2nC_{14}+2nC_{15}+2nC_{16}=\tilde{C}_{3}+2n\tilde{C}_{4} \in \mathcal{O}(n),
\end{eqnarray*}
where $\tilde{C}_{3}=C_{11}+C_{12}+2C_{13}+2C_{14}+2C_{15}+2C_{16}+C_{17}+C_{18}+C_{19}+C_{20}$ and $\tilde{C}_{4}=C_{13}+C_{14}+C_{15}+C_{16}$.
Hence time complexity of Algorithm \ref{generation-algorithm} is $\mathcal{O}(n)$. In the same manner, we can prove that the time complexity of Algorithm \ref{reproduction-algorithm} is also $\mathcal{O}(n)$.

\begin{table}[p] 
 \caption[]{\label{generation-table}\emph{Implementation of Algorithm \ref{generation-algorithm} for $n=1,2,3,4$.}}
\centering \setlength{\tabcolsep}{0.5pt}
\begin{center}
\begin{tabular}{||c||c|c|c|c||}
  \hline \hline
  $n$ $\rightarrow$  & 1 & 2 & 3 & 4\\
  \hline \hline
  Schemes $\rightarrow$  & $4$-point scheme & $6$-point scheme & $8$-point scheme & $10$-point scheme\\
  & (relaxed) & (relaxed) & (relaxed) & (relaxed) \\
  \hline \hline
  & & & & \\
  generation degree $\forall$ $\alpha$ & 3 & 5 & 7 & 9 \\
  \hline
    & & & & \\
  generation degree $\alpha=-1$ & 5 & 9 & 13 & 17 \\
    \hline \hline
\end{tabular}
\end{center}
\end{table}

\begin{algorithm}[p] 
   \caption[]{\label{reproduction-algorithm} \emph{Reproduction degree of the schemes}}
    \begin{algorithmic}[1]
    \State \textbf{input:}  The symbol of the proposed schemes, i.e. $a_{2n+2}(z)=a_{0,2n+2}z^{0}+a_{1,2n+2}z^{1}+a_{2,2n+2}z^{2}+a_{3,2n+2}z^{3}+\ldots+a_{4n+2,2n+2}z^{4n+2}=\frac{(1+z)^{2n+2}}{2^{2n+1}}(A_{0,2n+2}z^{0}+A_{1,2n+2}z^{1}+A_{2,2n+2}z^{2}+A_{3,2n+2}z^{3}+\ldots+A_{2n,2n+2}z^{2n})$, $n \in\mathbb{Z}^{+}$
    \Comment $a_{i,2n+2}:i=0,1,\ldots,4n+2$ are the coefficients (depends on $\alpha$) of $z^{i}:i=0,1,\ldots,4n+2$ respectively in $a_{2n+2}(z)$
    \State calculate: $\tau=\frac{a^{'}(z)}{2}$
    \For {$j$ $=$ $0$ to $4n+3$}
       \State calculate: $A_{j}=\left.a^{(j)}_{2n+2}\right|_{z=1}$, $B_{j}=\prod\limits_{p=0}^{j-1}(\tau-p)$ and $C_{j}=\left.a^{(j)}_{2n+2}(z)\right|_{z=-1}$
       \If {$A_{j}=B_{j}$} and {$C_{j}=0$}
          \State scheme reproduce polynomials of degree $j$ $\forall$ $\alpha$
       \Else {}
          \State calculate: $A_{j}$, $B_{j}$ and $C_{j}$ for $\alpha=0$
          \If {$A_{j}=B_{j}$ and $C_{j}=0$ for $\alpha=0$}
             \State scheme reproduce polynomials of degree $j$ for $\alpha=0$
          \Else
             \State \Goto{B}
          \EndIf
       \EndIf
    \EndFor \label{B}
    \State \textbf{output:} degree of reproduction of the schemes corresponding to $a_{2n+2}(z)$ for all $\alpha$ or for $\alpha=0$
\end{algorithmic}
\end{algorithm}

\begin{table}[p] 
 \caption[]{\label{reproduction-table}\emph{Implementation of Algorithm \ref{reproduction-algorithm} for $n=1,2,3,4$.}}
\centering \setlength{\tabcolsep}{0.5pt}
\begin{center}
\begin{tabular}{||c||c|c|c|c||}
  \hline \hline
  $n$ $\rightarrow$  & 1 & 2 & 3 & 4 \\
  \hline \hline
  Schemes $\rightarrow$  & $4$-point scheme & $6$-point scheme & $8$-point scheme & $10$-point scheme\\
  & (relaxed) & (relaxed) & (relaxed) & (relaxed) \\
  \hline \hline
  & & & & \\
  reproduction degree $\forall$ $\alpha$ & 1 & 1 & 1 & 1\\
  \hline
    & & & & \\
  reproduction degree $\alpha=0$ & 3 & 5 & 7 & 9\\
    \hline \hline
\end{tabular}
\end{center}
\end{table}
\subsection{Gibbs oscillations}
According to \cite{Hameed1}, linear subdivision schemes exhibit overshoots or undershoots Gibbs oscillations near to the points of discontinuity of a discontinuous function. The schemes that show undershoots generate curves free from oscillations while the schemes with overshoots give unpleasant oscillations in the limit curves.
Let $0 \leqslant \xi \leqslant h$, a function $p$ is said to be discontinuous at a point $\xi$, if
\begin{eqnarray*}
&&\forall x\leqslant  \xi, \,\ p(x)=p_{-(x)} \,\ \mbox{with} \,\ p_{-} \in C^{\infty}(]-\infty, \xi]),\\
&&\forall x \geqslant \xi, \,\ p(x)=p_{+(x)} \,\ \mbox{with} \,\ p_{+} \in C^{\infty}(]\xi, \infty[),
\end{eqnarray*}
with $p_{-}(\xi)>p_{+}(\xi)$.

A scheme has the overshoots near to the point $\xi$ of discontinuity if
\begin{eqnarray*}
&&\forall x \leqslant \xi, \,\ p_{-(x)}<S_{A}^{\infty}(p^{0})(x),\\
&&\forall x \geqslant \xi, \,\  p_{+(x)}>S_{A}^{\infty}(p^{0})(x).
\end{eqnarray*}
A scheme has the undershoots near to the point $\xi$ of discontinuity if
\begin{eqnarray*}
&&\forall x \leqslant \xi, \,\ p_{-(x)}>S_{A}^{\infty}(p^{0})(x),\\
&&\forall x \geqslant \xi, \,\  p_{+(x)}<S_{A}^{\infty}(p^{0})(x).
\end{eqnarray*}
Algorithm \ref{gibbs-algorithm} is designed to calculate the range of tension parameter $\alpha$ for which the proposed schemes exhibit undershoot Gibbs oscillations. The results obtained from this algorithm are summarized in Table \ref{gibbs-table}. We calculate the time complexity of Algorithm \ref{gibbs-algorithm} which depends on the input value $k$ by using step count method. The time complexity of this algorithm does not depend on the input value $n$. Hence the time complexity of this algorithm is $\mathcal{O}(k)$.
\begin{algorithm}[p] 
   \caption[]{\label{gibbs-algorithm} \emph{Undershoot Gibbs phenomenon of the proposed schemes.}}
    \begin{algorithmic}[1]
    \State \textbf{input:} $n \in\mathbb{Z}^{+}$, $k$, $P_{i}^{0}:i \in \mathbb{Z}$
    \Comment  $P_{i}^{0}$ are taken from a discontinuous function, i.e. \begin{equation*}
 P_{i}^{0}= \begin{cases}
\,\ 10  & \text{for $i \in ]-\infty,-1]$}, \\
 -10  & \text{for  $i \in ]0,+\infty[$}.
\end{cases}
\end{equation*}
    \For {$m$ $=$ $0$ to $k$}
       \State calculate:   $P_{2i,n}^{m+1} =\frac{-\alpha}{2^{4n+1}}\sum\limits_{j=0,j\neq n}^{2n}\left(\begin{array}{c}4n+2\\2j+1\end{array}\right)P_{i+j-n,2n+2}^{m}+\frac{1}{2^{4n+1}}\left[2^{4n+1}(1+\alpha)-\alpha \times \right.$ $\left. \left(\begin{array}{c}4n+2\\2n+1\end{array}\right)\right]P_{i,2n+2}^{m},$
  \State calculate:   $P_{2i+1,2n+2}^{m+1} =\frac{1}{2^{4n+1}}\sum\limits_{j=0}^{2n+1}\left[\frac{(-1)^{j-n-1}(n+1)}{2j-2n-1}\left(\begin{array}{c}2n+1\\n\end{array}\right)\left(\begin{array}{c}2n+1\\j\end{array}\right)+\alpha \times \right.$ $\left.\left\{\frac{(-1)^{j-n-1}(n+1)}{2j-2n-1}
  \left(\begin{array}{c}2n+1\\n\end{array}\right)\left(\begin{array}{c}2n+1\\j\end{array}\right) -\left(\begin{array}{c}4n+2\\2j\end{array}\right)\right\}\right]P_{i+j-n,2n+2}^{m}.$
    \EndFor
     \State calculate: intervals of $\alpha$ for which $P_{-1,2n+2}^{0}>S^{k+1}_{a_{2n+2}}(P_{-1,2n+2}^{0})$\label{D}
     \State calculate: intervals of $\alpha$ for which $P_{0,2n+2}^{0}<S^{k+1}_{a_{2n+2}}(P_{0,2n+2}^{0})$\label{E}
     \State calculate: intersection of intervals of $\alpha$ from Steps \ref{D}-\ref{E}
    \State \textbf{output:} range of $\alpha$ for which the given scheme exibits undershoot Gibbs phenomenon at specific level of subdivision $k$.
\end{algorithmic}
\end{algorithm}

\begin{table}[p] 
 \caption[]{\label{gibbs-table}\emph{Implementation of Algorithm \ref{gibbs-algorithm} for $k=0,1,2,3$ and $n=1,2,3$.}}
\centering \setlength{\tabcolsep}{0.5pt}
\begin{center}
\begin{tabular}{||c||c|c|c||}
  \hline \hline
$n$ $\rightarrow$  & 1 & 2 & 3 \\
  \hline \hline
Schemes $\rightarrow$  & $4$-point scheme & $6$-point scheme & $8$-point scheme \\
   k & (relaxed) & (relaxed) & (relaxed)  \\
 $\downarrow$ &&& \\
  \hline \hline
  0  & $\alpha<0$ & $\alpha<0$ & $\alpha<0$  \\
  &  &  &  \\
  \hline
 1  & $-5.444444444<\alpha<0$ & $-4.013223140<\alpha<0$ & $-3.523131552<\alpha<0$  \\
  & & & \\
   \hline
 2  & $-9.494261050<\alpha<0$ & $-6.840555349<\alpha<0$ & $-5.821687339<\alpha<0$  \\
  & & & \\
    \hline
 3  & $-5.618133906<\alpha<0$ & $-5.583827202<\alpha<0$  &  $-7.639304050<\alpha<0$ \\
  & & & \\
    \hline \hline
\end{tabular}
\end{center}
\end{table}

\subsection{Shape preserving properties}
Here we discuss the shape preserving properties, monotonicity and convexity preservation, of the proposed family of subdivision schemes. We prove that the proposed subdivision schemes preserve monotonicity and convexity for a special interval of tension parameter. For this, firstly we calculate an interval of $\alpha$ for which the proposed subdivision schemes having the bell-shaped mask. Then by using results given in \cite{Hameed1}, we prove that the proposed schemes preserve monotonicity and convexity for a special interval of $\alpha$.

A $2N$-degree polynomial $\sum\limits_{j=0}^{2N}b_{j}x^{j}$ is said to be a polynomial with bell-shaped coefficients if it satisfied the following conditions

\begin{eqnarray}\label{}
\left\{\begin{array}{ccc}
&&\mbox{supp}(b_{j})=[0,2N], \\ \\
&&b_{j}>0 \,\ \mbox{for} \,\ j \in [0,2N],\\ \\
&&b_{j}=b_{2N-j} \,\ \mbox{for} \,\ j \in [0,2N],\\ \\
&&b_{j}<b_{j+1}  \,\ \mbox{for} \,\ j \in [0, N-1]
  \end{array}\right.
\end{eqnarray}

The mask symbol of the proposed family of schemes is defined in (\ref{proposed1}) which is a polynomial of degree-$(4n+2)$ satisfying the following two conditions.
\begin{eqnarray}\label{}
\left\{\begin{array}{ccc}
&&\mbox{supp}(a_{j,2n+2})=[0,4n+2], \\ \\
&&a_{j,2n+2}=a_{4n+2-j,2n+2} \,\ \mbox{for} \,\ j \in [0,4n+2].
  \end{array}\right.
\end{eqnarray}
The other two conditions depend on value of $\alpha$ which can be calculated by Algorithm \ref{bell-shaped}. The outputs obtained from Algorithm \ref{bell-shaped} are given in Table \ref{bell-mask-table}.
\begin{algorithm}[p] 
   \caption[]{\label{bell-shaped} \emph{Bell-shaped mask of the proposed schemes.}}
    \begin{algorithmic}[1]
    \State \textbf{input:}  The mask $a_{j,2n+2}:j=0,1,\ldots,4n+2$ of the family of proposed schemes whose mask symbol is defined in (\ref{symbol-proposed1}), $n \in\mathbb{Z}^{+}$
    \State set: $A_{j}=a_{j,2n+2}$
    \For {$j$ $=$ $0$ to $4n+2$}
       \State calculate: an interval, say $(\alpha_{1}^{j},\alpha_{2}^{j})$, for $\alpha$ by solving $A_{j}>0$
    \EndFor
    \State calculate: intersection, say $(\alpha_{1},\alpha_{2})$, of $4n+3$ intervals $(\alpha_{1}^{j},\alpha_{2}^{j}):j=0,1,\ldots,4n+2$
    \For {$j$ $=$ $0$ to $2n$}
       \State calculate: an interval, say $(\beta_{1}^{j},\beta_{2}^{j})$, for $\alpha$ by solving $A_{j}<A_{j+1}$
    \EndFor
    \State calculate: intersection, say $(\beta_{1},\beta_{2})$, of $2n+1$ intervals $(\beta_{1}^{j},\beta_{2}^{j}):j=0,1,\ldots,2n$
    \State calculate: intersection, say $(\gamma_{1},\gamma_{2})$, of $2$ intervals $(\alpha_{1},\alpha_{2})$ and $(\beta_{1},\beta_{2})$
    \State \Return $(\alpha_{1},\alpha_{2})$, $(\beta_{1},\beta_{2})$, $(\gamma_{1},\gamma_{2})$
    \State \textbf{output:} the proposed $(2n+2)$-point scheme has a bell-shaped mask for $\alpha \in (\gamma_{1},\gamma_{2})$
\end{algorithmic}
\end{algorithm}

\begin{table}[p] 
 \caption[]{\label{bell-mask-table}\emph{Implementation of Algorithm \ref{bell-shaped} for $n=1,2,3$.}}
\centering \setlength{\tabcolsep}{0.3pt}
\small
\begin{center}
\begin{tabular}{||c||c|c|c||}
\hline \hline
$n$ $\rightarrow$  & 1 & 2 & 3 \\
  \hline \hline
Schemes $\rightarrow$  & $4$-point scheme & $6$-point scheme & $8$-point scheme \\
    & (relaxed) & (relaxed) & (relaxed)  \\
\hline \hline
$(\alpha_{1},\alpha_{2})$ $\rightarrow$ & $(-2.666666667,-0.6666666667)$ & $(-1.200000000, -0.5263157895)$ & $(-1.721008403,-0.9523809524)$ \\
&&&\\
\hline
$(\beta_{1},\beta_{2})$ $\rightarrow$ & $(-1.555555556,0.6666666667)$ & $(-1.247058824,-0.5882352941)$ & $(-1.149842822,-0.6060606061)$\\
&&&\\
\hline
$(\gamma_{1},\gamma_{2})$ $\rightarrow$ & $(-1.555555556,-0.6666666667)$ & $(-1.200000000,-0.5882352941)$ & $(-1.149842822,-0.9523809524)$\\
&&&\\
\hline \hline
\end{tabular}
\end{center}
\end{table}

\begin{thm}
The proposed $4$-point, $6$-point and $8$-point subdivision schemes preserve monotonicity for $-1.555555556< \alpha <-0.6666666667$, $-1.200000000< \alpha <-0.5882352941$ and $-1.149842822 < \alpha < -0.9523809524$ respectively.
\end{thm}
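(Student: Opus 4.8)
The plan is to reduce the claim to the bell-shapedness of the three masks combined with the monotonicity-preservation criterion of \cite{Hameed1}, which states that a convergent binary subdivision scheme whose mask is a polynomial with bell-shaped coefficients preserves monotonicity of the data. Thus it suffices, for each of $n=1,2,3$, to determine exactly the range of $\alpha$ on which the mask $a_{j,2n+2}$ of the proposed $(2n+2)$-point scheme is bell-shaped, and to verify that the scheme is convergent on that range.

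First I would recall that two of the four bell-shaped conditions hold for every admissible $\alpha$: the support condition $\mathrm{supp}(a_{j,2n+2})=[0,4n+2]$ and the symmetry $a_{j,2n+2}=a_{4n+2-j,2n+2}$ are built into the construction (\ref{proposed})--(\ref{proposed1}), as already noted before Algorithm \ref{bell-shaped}. What remains is (i) positivity, $a_{j,2n+2}>0$ for $j=0,\dots,4n+2$, and (ii) strict increase, $a_{j,2n+2}<a_{j+1,2n+2}$ for $j=0,\dots,2n$. Since the interpolatory part of the mask is $\alpha$-free and the displacement part is linear in $\alpha$, every $a_{j,2n+2}$ is an affine function of $\alpha$, so each inequality in (i) and (ii) is a single linear inequality in $\alpha$ and cuts out an open half-line. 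Carrying out Algorithm \ref{bell-shaped} then means writing down these $4n+3$ inequalities for (i) and $2n+1$ inequalities for (ii), intersecting each family to get $(\alpha_1,\alpha_2)$ and $(\beta_1,\beta_2)$, and intersecting those to obtain $(\gamma_1,\gamma_2)$. For $n=1$ the outermost/central coefficients give $(\alpha_1,\alpha_2)=(-\tfrac83,-\tfrac23)$, the two binding increase inequalities give $(\beta_1,\beta_2)=(-\tfrac{14}{9},\tfrac23)$, hence $(\gamma_1,\gamma_2)=(-\tfrac{14}{9},-\tfrac23)$; the values for $n=2,3$ are those tabulated in Table \ref{bell-mask-table}. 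These are precisely the intervals in the statement.

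Finally I would observe that each of these intervals lies strictly inside the $C^{1}$-continuity range of the corresponding scheme recorded in Tables \ref{continuity-L-1}--\ref{continuity-L-2} (for instance $(-\tfrac{14}{9},-\tfrac23)\subset(-\tfrac83,0)$ for the $4$-point scheme), so on $(\gamma_1,\gamma_2)$ the scheme is in particular convergent. Applying the criterion of \cite{Hameed1} to the mask, which has now been checked to be bell-shaped there, yields monotonicity preservation on the claimed ranges of $\alpha$.

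The main obstacle is the bookkeeping in step (ii) for the larger schemes. Unlike positivity, which is governed by just a couple of the coefficients once the mask is roughly bell-shaped, the strict-increase inequalities $a_{j,2n+2}<a_{j+1,2n+2}$ compare neighbouring mask entries whose $\alpha$-coefficients change sign as $j$ moves along the mask, so for $n=2,3$ one must carefully identify which of the $2n+1$ linear inequalities is binding at each endpoint; this is the source of the non-round values such as $-1.149842822$ and is the only genuinely non-routine part of the argument.
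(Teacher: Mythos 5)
Your proposal follows essentially the same route as the paper: both reduce the claim to bell-shapedness of the mask on the stated $\alpha$-intervals (obtained exactly as in Algorithm \ref{bell-shaped} and Table \ref{bell-mask-table}) combined with the monotonicity-preservation criterion of \cite{Hameed1} (Theorem 4.3 there). Your explicit linear inequalities for $n=1$ reproduce the tabulated intervals correctly, and your convergence remark plays the same role as the paper's appeal to the basic sum rule in Theorem \ref{bell-shaped-theorem}.
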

\begin{proof}
By Theorem \ref{bell-shaped-theorem}, we see that the proposed subdivision schemes satisfy the basic sum rule. Also from Algorithm \ref{bell-shaped}, we get that the proposed $4$-point, $6$-point and $8$-point subdivision schemes have bell-shaped mask for $-1.555555556< \alpha <-0.6666666667$, $-1.200000000< \alpha <-0.5882352941$ and $-1.149842822 < \alpha < -0.9523809524$ respectively, so it is easy to prove the required result by using (\cite{Hameed1}, Theorem 4.3).
\end{proof}

\begin{thm}
The proposed $4$-point, $6$-point and $8$-point subdivision schemes preserve convexity for $-1.555555556< \alpha <-0.6666666667$, $-1.200000000< \alpha <-0.5882352941$ and $-1.149842822 < \alpha < -0.9523809524$ respectively.
\end{thm}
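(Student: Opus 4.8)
The plan is to mirror exactly the structure of the preceding monotonicity theorem, since the two results are proved from the same ingredients. First I would invoke Theorem \ref{bell-shaped-theorem}, which guarantees that the proposed family of schemes satisfies the basic sum rule, hence $a_{2n+2}(1)=2$ and $a_{2n+2}(-1)=0$; this is the convergence-type prerequisite needed before any shape-preservation result can be applied. Next I would recall the output of Algorithm \ref{bell-shaped} as recorded in Table \ref{bell-mask-table}: for $n=1,2,3$ the mask of the $(2n+2)$-point scheme is bell-shaped precisely on the intervals $(\gamma_1,\gamma_2)$ equal to $(-1.555555556,-0.6666666667)$, $(-1.200000000,-0.5882352941)$ and $(-1.149842822,-0.9523809524)$ respectively. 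Together with the already-verified symmetry $a_{j,2n+2}=a_{4n+2-j,2n+2}$ and the support condition $\mathrm{supp}(a_{j,2n+2})=[0,4n+2]$, this means the full bell-shapedness hypothesis of the convexity criterion is met on exactly the stated $\alpha$-ranges.

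Then I would apply the convexity-preservation result of \cite{Hameed1} (the convexity analogue of Theorem 4.3 there, presumably a subsequent theorem in that reference) which states that a symmetric, interpolatory-or-approximating binary scheme with bell-shaped mask satisfying the basic sum rule preserves convexity of the initial data. Feeding in the three intervals from Table \ref{bell-mask-table} yields the claim for the $4$-point, $6$-point and $8$-point members. The argument is essentially a citation-plus-verification, so the writeup is short.

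The only genuine obstacle — and it is a mild one — is making sure the hypotheses of the cited convexity theorem from \cite{Hameed1} really are subsumed by ``bell-shaped mask $+$ basic sum rule,'' i.e. that no extra condition (such as a sign condition on second differences of the mask, or a bound relating $\gamma_1$ to the mask coefficients) is needed beyond what Algorithm \ref{bell-shaped} already checks. If \cite{Hameed1} requires strict inequalities $b_j<b_{j+1}$ on the rising half of the mask, these are exactly the strict inequalities solved in the second loop of Algorithm \ref{bell-shaped} producing $(\beta_1,\beta_2)$, so the intersection interval $(\gamma_1,\gamma_2)$ is designed to supply them. Assuming that match-up, the proof reads as follows.

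\begin{proof}
By Theorem \ref{bell-shaped-theorem}, the proposed subdivision schemes satisfy the basic sum rule. Also from Algorithm \ref{bell-shaped} and Table \ref{bell-mask-table}, the proposed $4$-point, $6$-point and $8$-point subdivision schemes have bell-shaped mask for $-1.555555556< \alpha <-0.6666666667$, $-1.200000000< \alpha <-0.5882352941$ and $-1.149842822 < \alpha < -0.9523809524$ respectively. Hence the result follows by applying the convexity-preservation theorem of \cite{Hameed1}.
\end{proof}
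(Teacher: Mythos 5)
Your approach is exactly the paper's: cite Theorem \ref{bell-shaped-theorem} for the basic sum rule, read the bell-shapedness intervals off Algorithm \ref{bell-shaped}/Table \ref{bell-mask-table}, and conclude by the convexity criterion of \cite{Hameed1} (which is indeed Theorem 4.4 there, the successor of the monotonicity result). However, the obstacle you flagged as ``mild'' is real: the convexity theorem in \cite{Hameed1} needs one hypothesis beyond ``bell-shaped mask $+$ basic sum rule,'' and your written proof omits it. The paper's proof adds the sentence that, by (\ref{symbol-proposed1}), the mask symbol $a_{2n+2}(z)$ contains the factor $(1+z)^{2}$; only with that divisibility condition does (\cite{Hameed1}, Theorem 4.4) apply. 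So the convexity proof is \emph{not} a verbatim copy of the monotonicity proof --- that is precisely the extra condition you suspected might be lurking.

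The gap is trivially filled: from (\ref{symbol-proposed1}) one has $a_{2n+2}(z)=\frac{(1+z)^{2n+2}}{2^{2n+1}}A_{2n+2}(z)$ with $n\in\mathbb{Z}^{+}$, so $(1+z)^{2}$ divides the symbol for every member of the family. Add that one sentence before invoking \cite{Hameed1} and your proof matches the paper's.
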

\begin{proof}
By Theorem \ref{bell-shaped-theorem}, we see that the proposed subdivision schemes satisfy the basic sum rule. Also from Algorithm \ref{bell-shaped}, we get that the proposed $4$-point, $6$-point and $8$-point subdivision schemes have bell-shaped mask for $-1.555555556< \alpha <-0.6666666667$, $-1.200000000< \alpha <-0.5882352941$ and $-1.149842822 < \alpha < -0.9523809524$ respectively. Further from (\ref{symbol-proposed1}), we can also see that the mask symbols of the proposed subdivision schemes have the factor $(1+z)^{2}$. Hence the required result is proved by (\cite{Hameed1}, Theorem 4.4).
\end{proof}

The proposed Algorithm \ref{bell-shaped} needs $\mathcal{O}(n)$ time to give output.
\section{Numerical examples}
In this section, we show the numerical performance, of the proposed schemes, based on the results which have proved in previous section. Figure \ref{BF-Schems} displays the support of the subdivision scheme when the initial data is taken from a basic limit function defined in (\ref{BF}). Figure \ref{scheme1} show the curves fitted by the proposed $(2n+2)$-point subdivision schemes. It is clear from this figure that the tension parameter involved in the proposed subdivision schemes provides relaxation in drawing and designing different types of curves by using the same initial points. Figure \ref{monotonic} and Figure \ref{convex} show the performance of the proposed subdivision schemes when the initial data is taken from monotonic and convex functions respectively. Figure \ref{flower} and Figure \ref{noise} show that the proposed subdivision schemes also carry the characteristics of keeping the shape of the initial polygon and of fitting the noisy data points respectively. Which is an important characteristic of the proposed family. In Literature, it is hard to find a single subdivision scheme which carry both characteristics simultaneously. The performance of the tensor product scheme of the proposed $4$-point scheme in surface fitting is shown in Figure \ref{surface}.  Sub-figures \ref{surface}(e), \ref{surface}(f), \ref{surface}(g), \ref{surface}(h), \ref{surface}(m), \ref{surface}(n), \ref{surface}(o) and \ref{surface}(p) are the mirror images in $ij$-plane of Sub-figures \ref{surface}(a), \ref{surface}(b), \ref{surface}(c), \ref{surface}(d), \ref{surface}(i), \ref{surface}(j), \ref{surface}(k) and \ref{surface}(l) respectively.

\begin{figure}[h!] 
\begin{center}
\begin{tabular}{ccc}
\epsfig{file=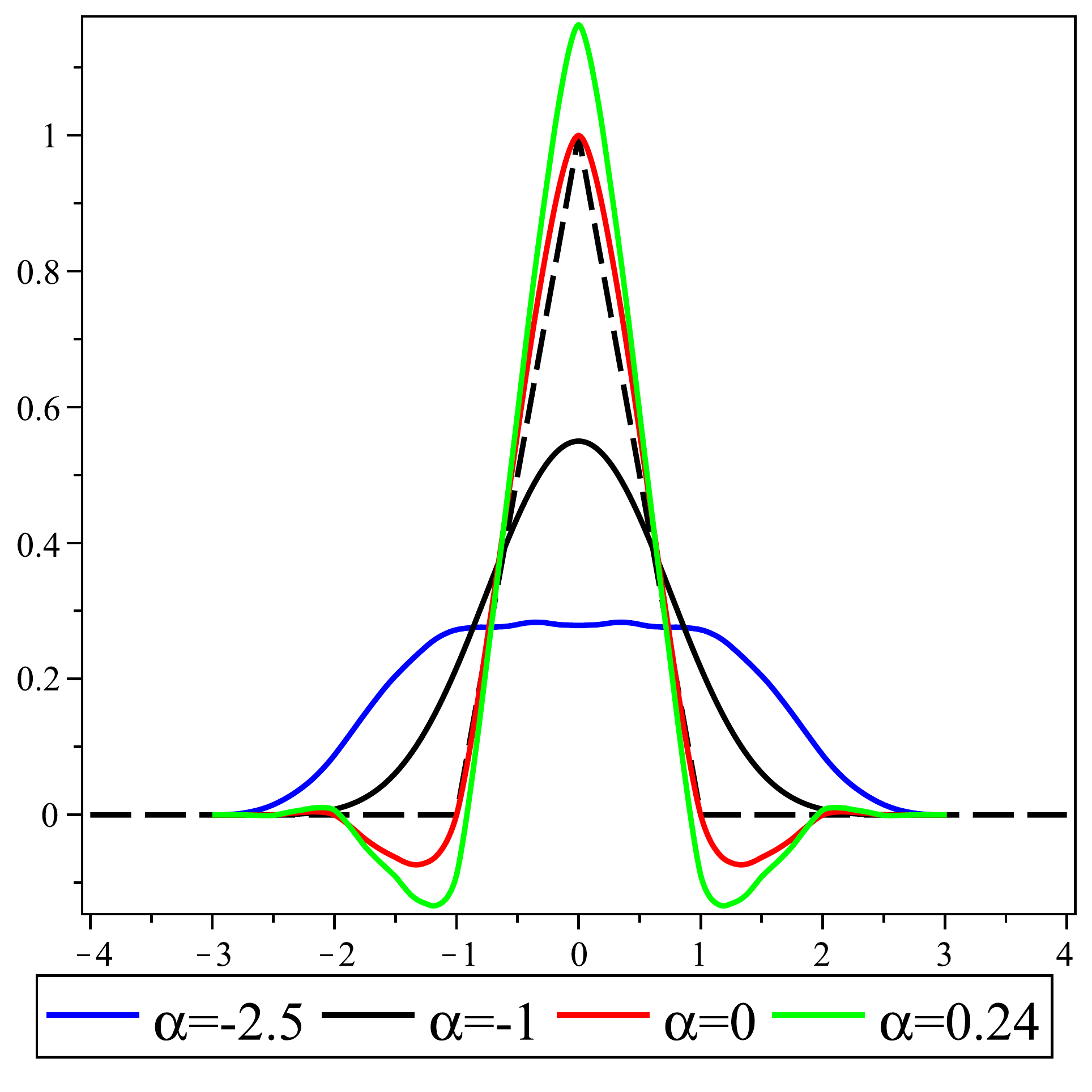, width=2.0 in} & \epsfig{file=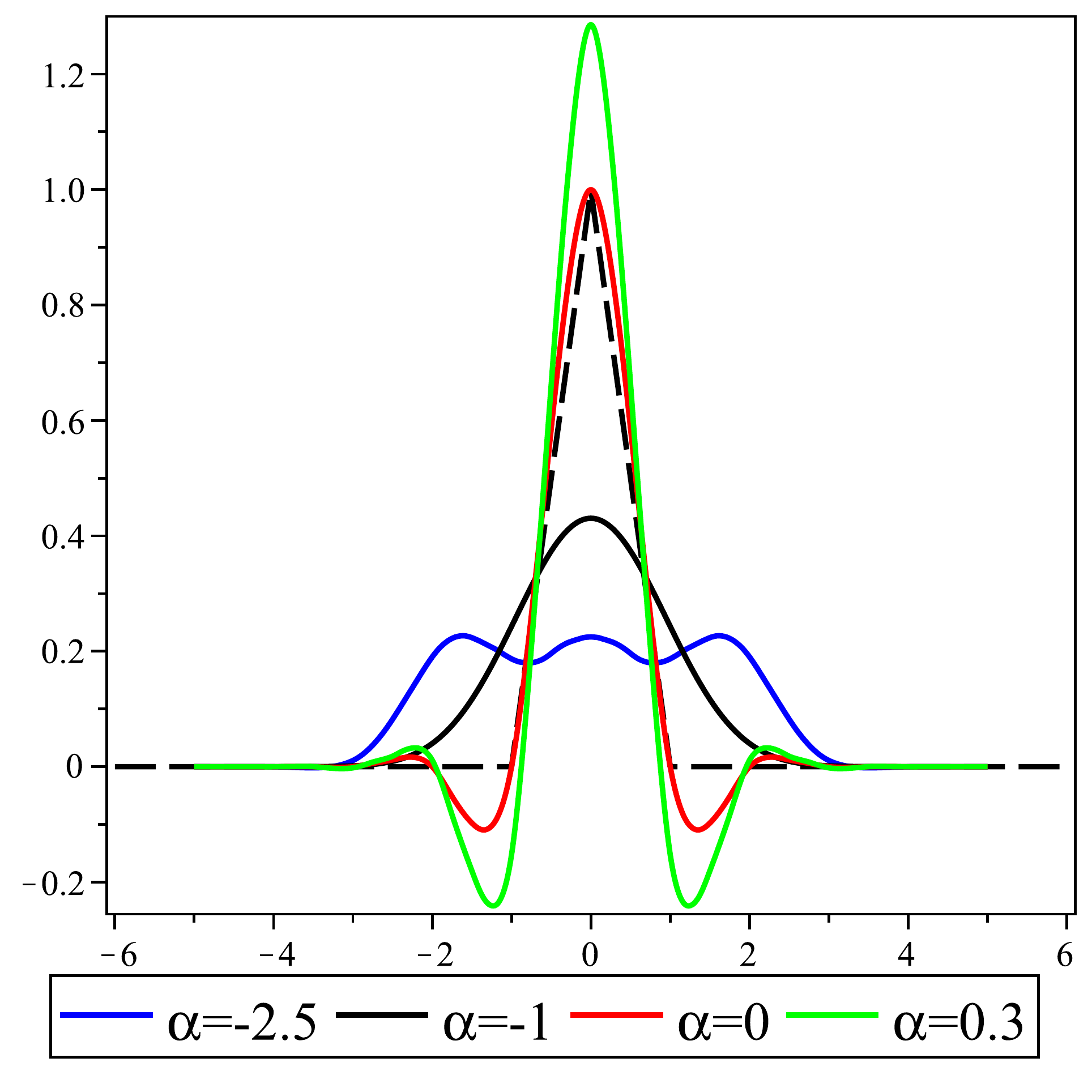, width=2.0 in} & \epsfig{file=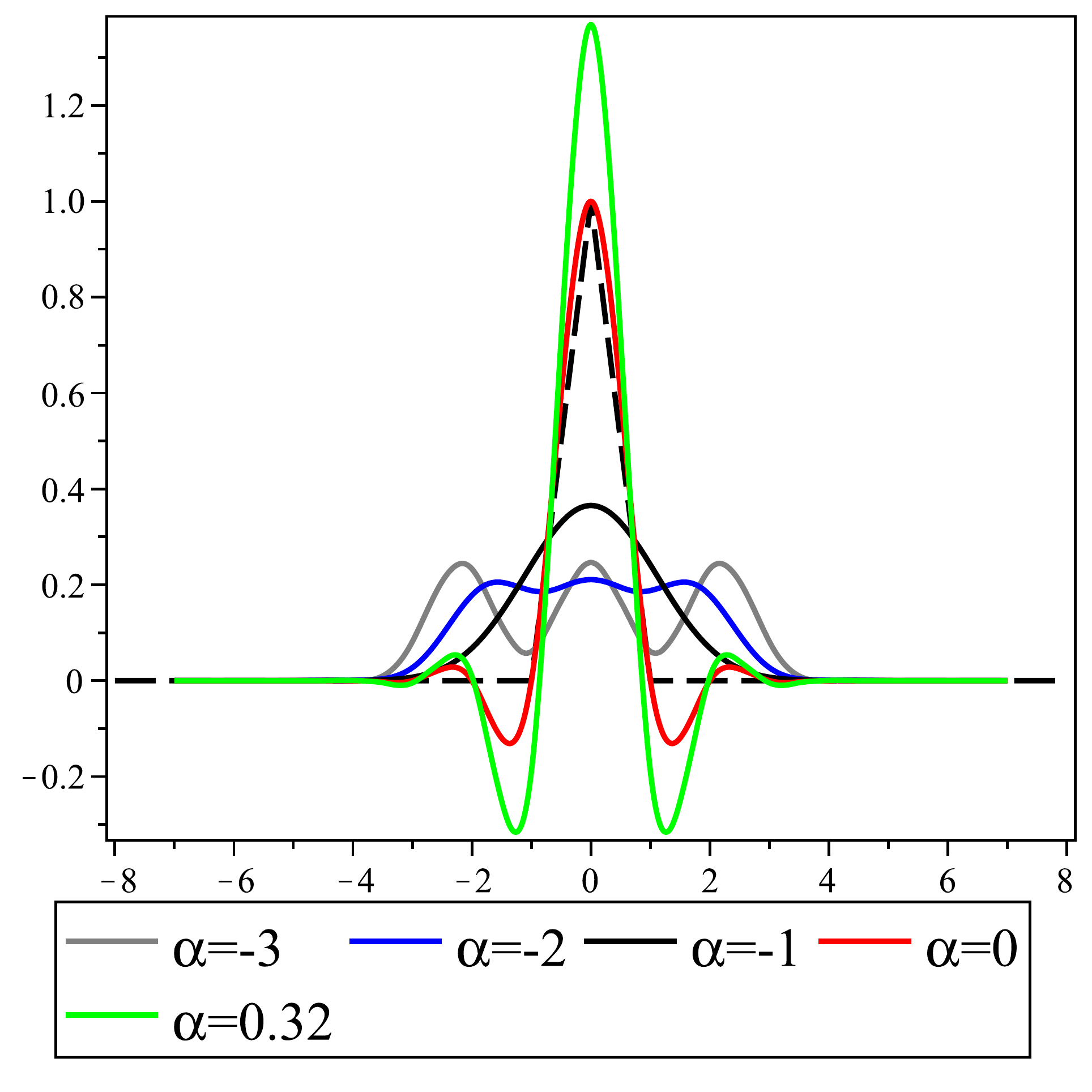, width=2.0 in} \\
(a) & (b) & (c) \\
$4$-point scheme & $6$-point scheme & $8$-point scheme
\end{tabular}
\end{center}
 \caption[]{\label{BF-Schems}\emph{Dashed lines are the initial polylines, while the solid lines show the limit curves generated by the proposed schemes at different values of parameter $\alpha$.
 }}
\end{figure}
\begin{figure}[h!] 
\begin{center}
\begin{tabular}{ccc}
\epsfig{file=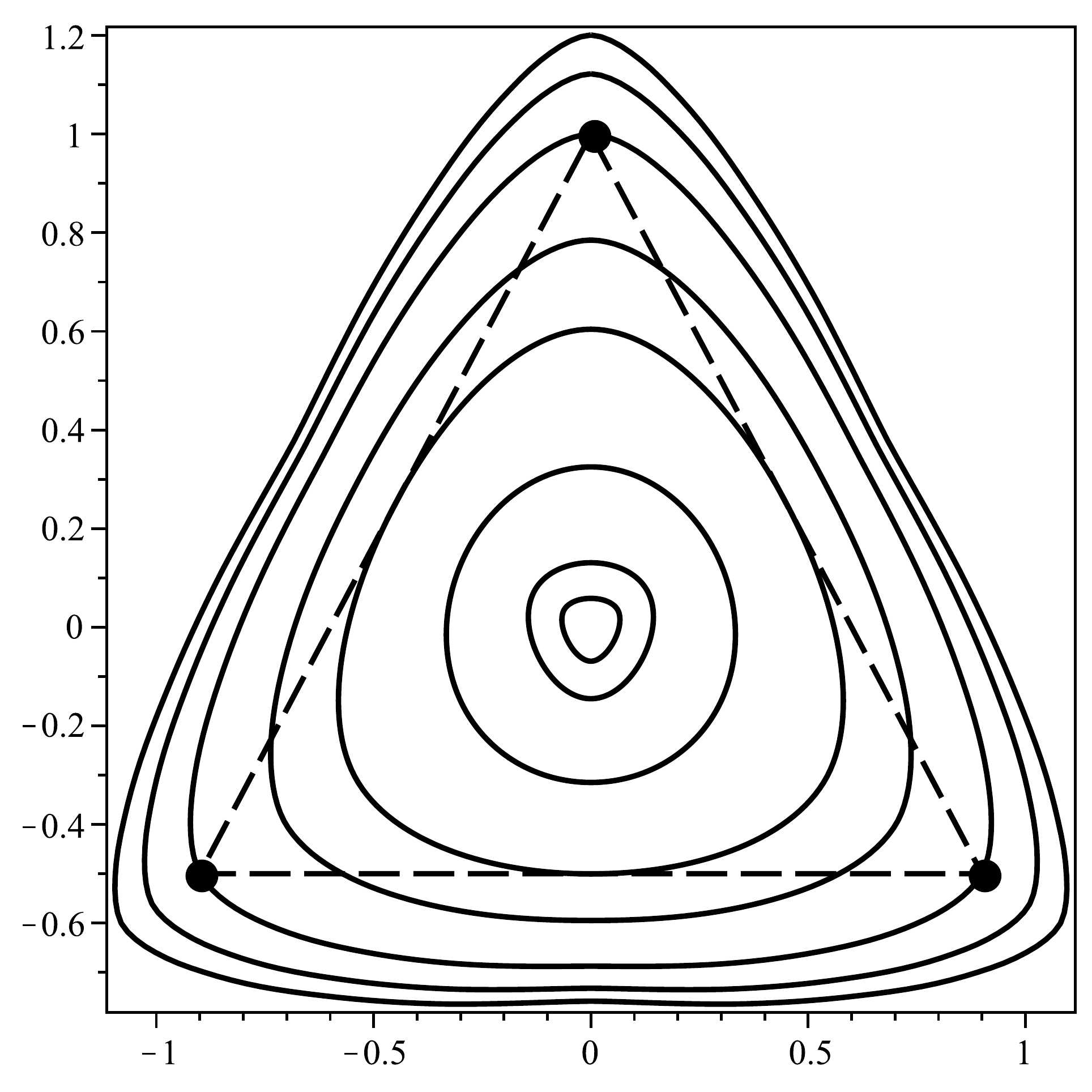, width=2.0 in} & \epsfig{file=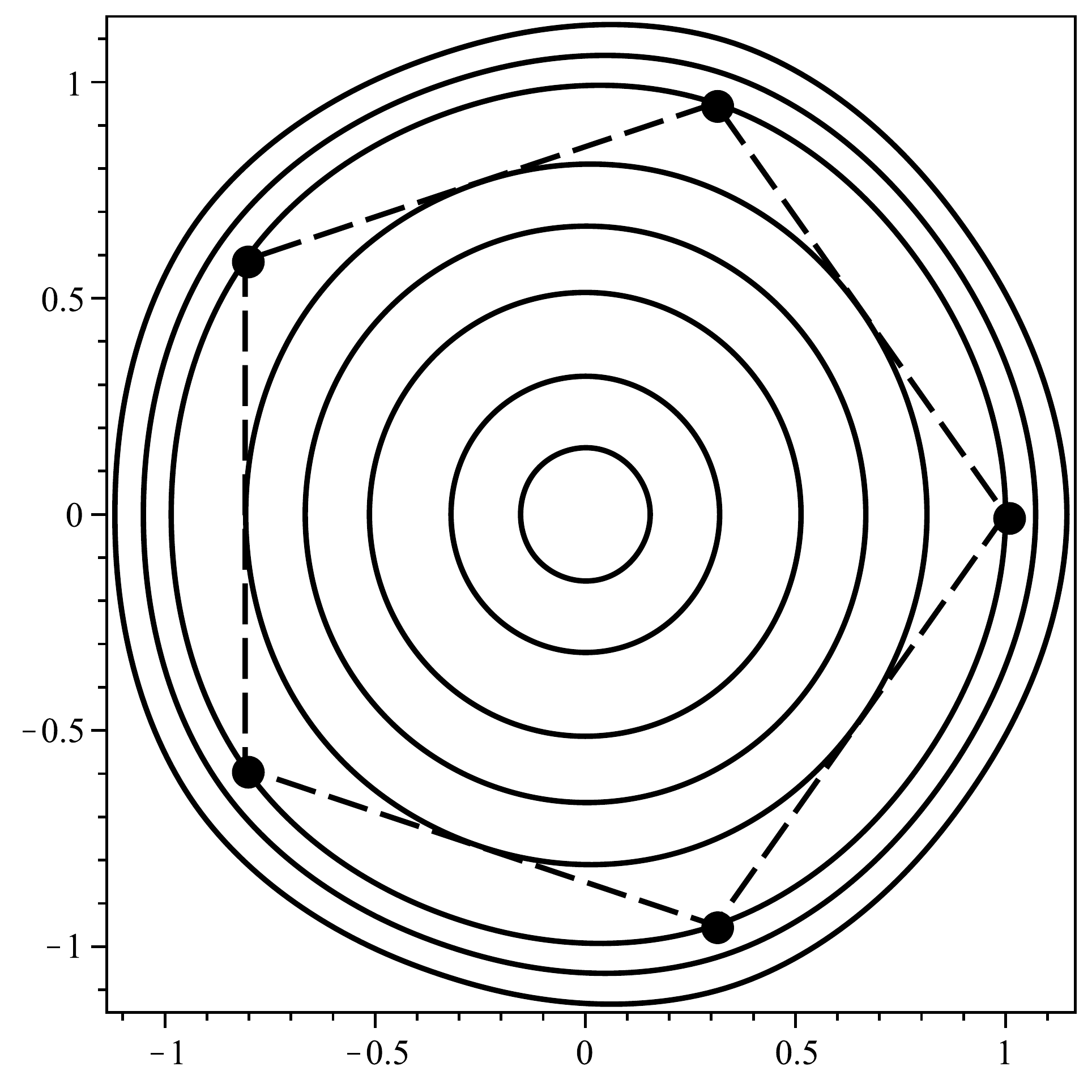, width=2.0 in} & \epsfig{file=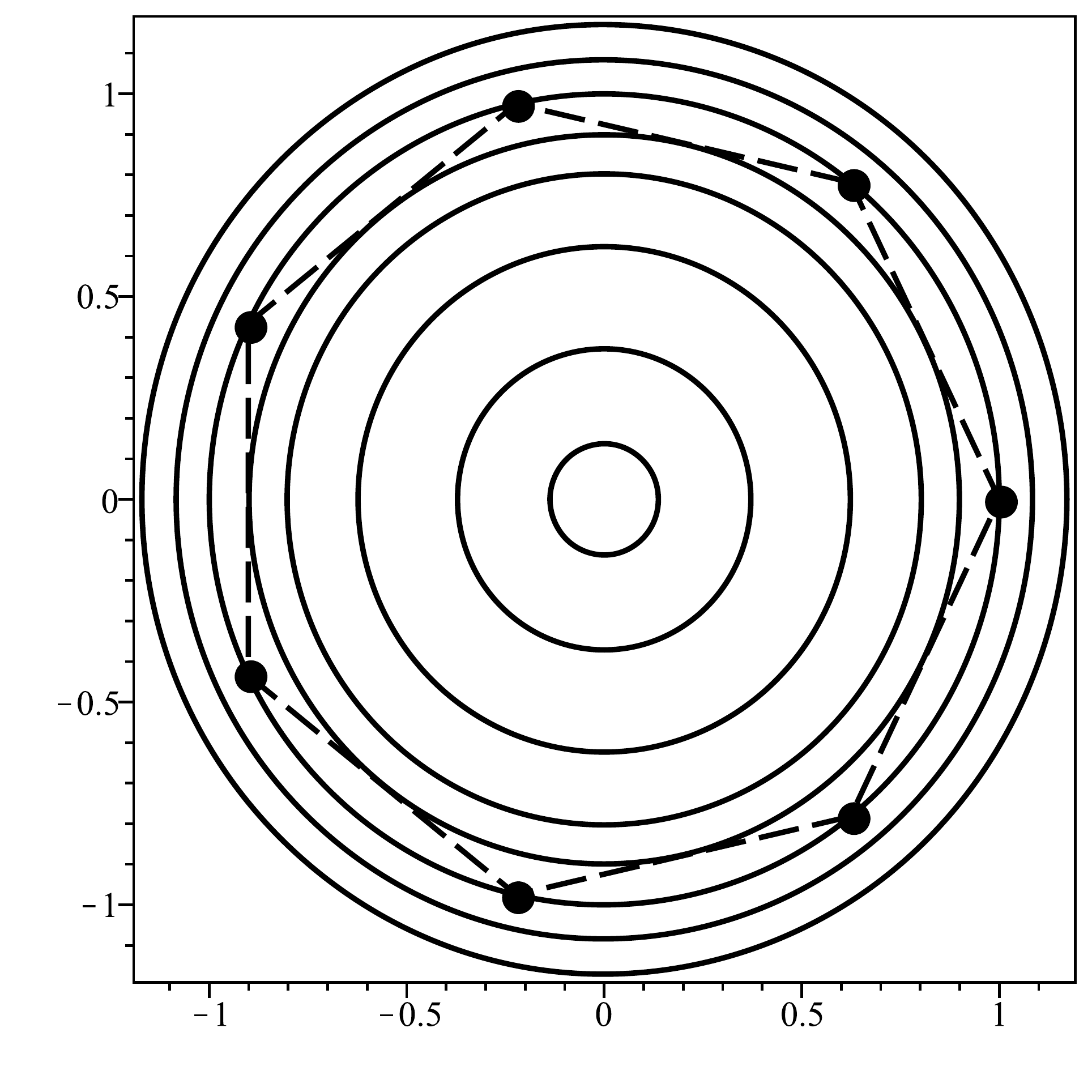, width=2.0 in} \\
(a) & (b) & (c)
\end{tabular}
\end{center}
 \caption[]{\label{scheme1}\emph{Black bullets are the initial control points, dashed lines polygons are the initial polygons, while black solid curves show the limit curves generated by the proposed subdivision schemes after 6 subdivision levels in (a) by the $4$-point scheme from outer to inner with $\alpha=0.2, 0.125, 0, -0.25, -0.5, -1,-1.5,-1.75$, in (b) by the $6$-point scheme from outer to inner with $\alpha=0.25, 0.125, 0, -0.35, -0.65, -1,-1.5,-2$ and in (c) by the $8$-point scheme from outer to inner with $\alpha=0.4, 0.2, 0, -0.25, -0.5, -1,-1.8,-2.7$.}}
\end{figure}

\begin{figure}[h!] 
\begin{center}
\begin{tabular}{ccc}
\epsfig{file=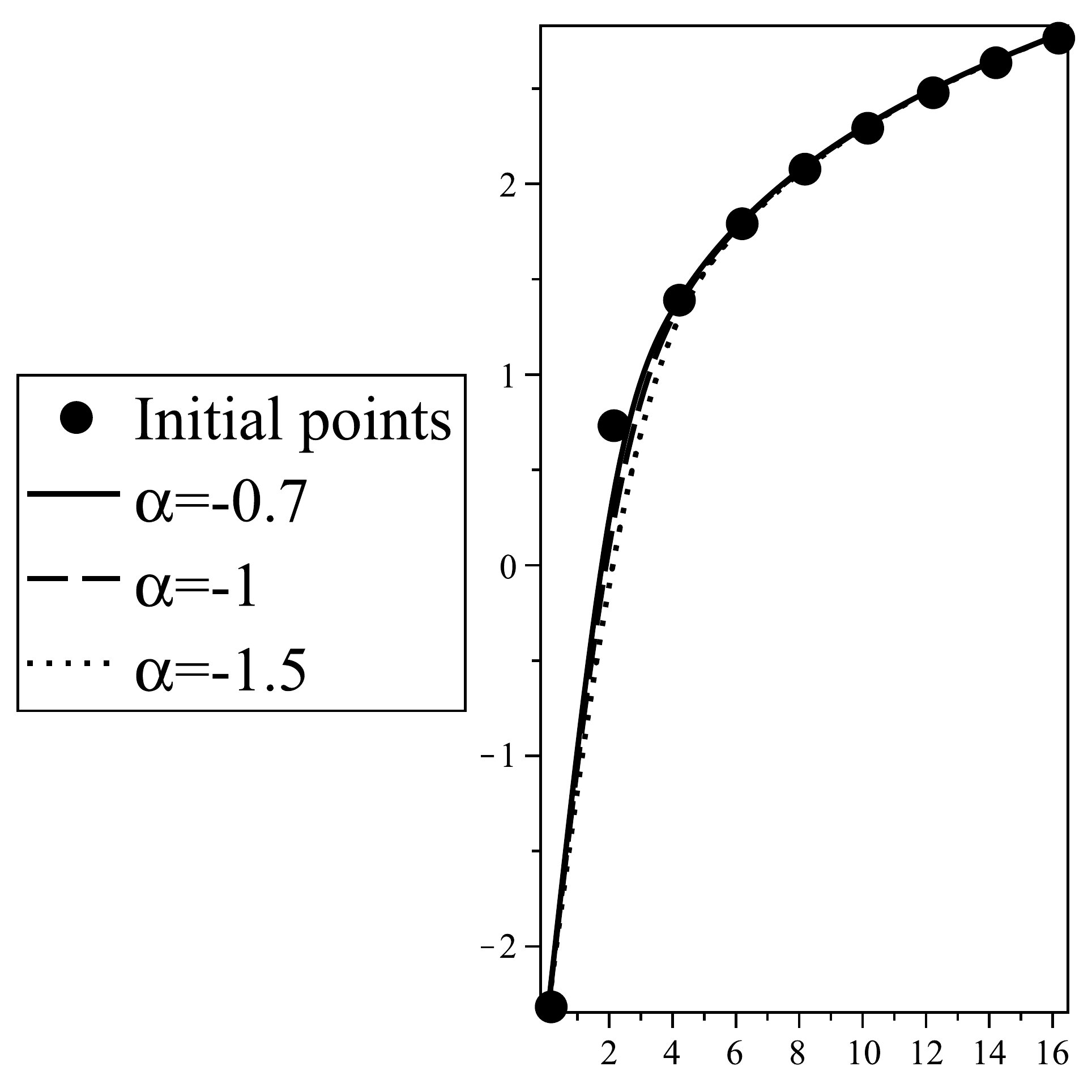, width=2.0 in} & \epsfig{file=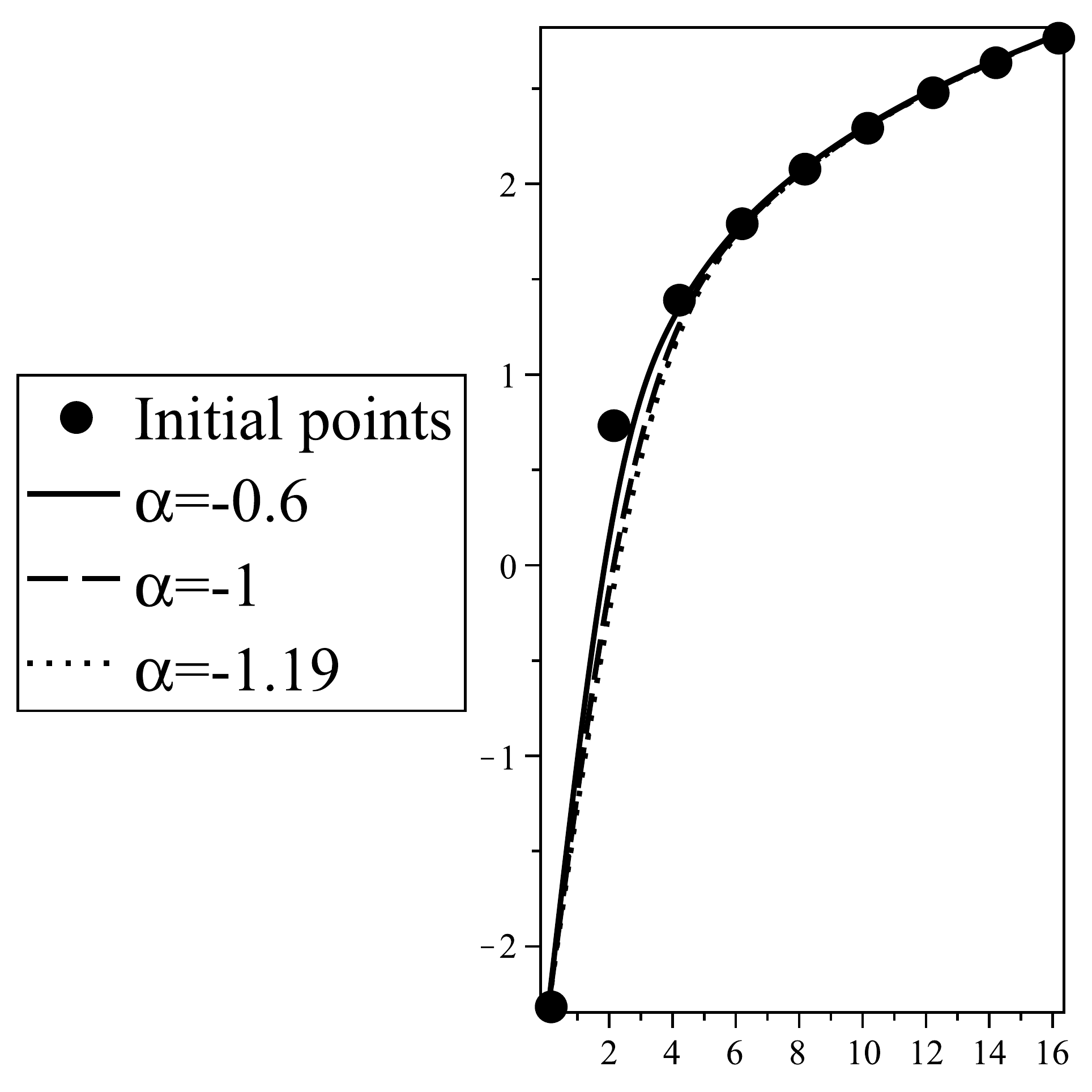, width=2.0 in} & \epsfig{file=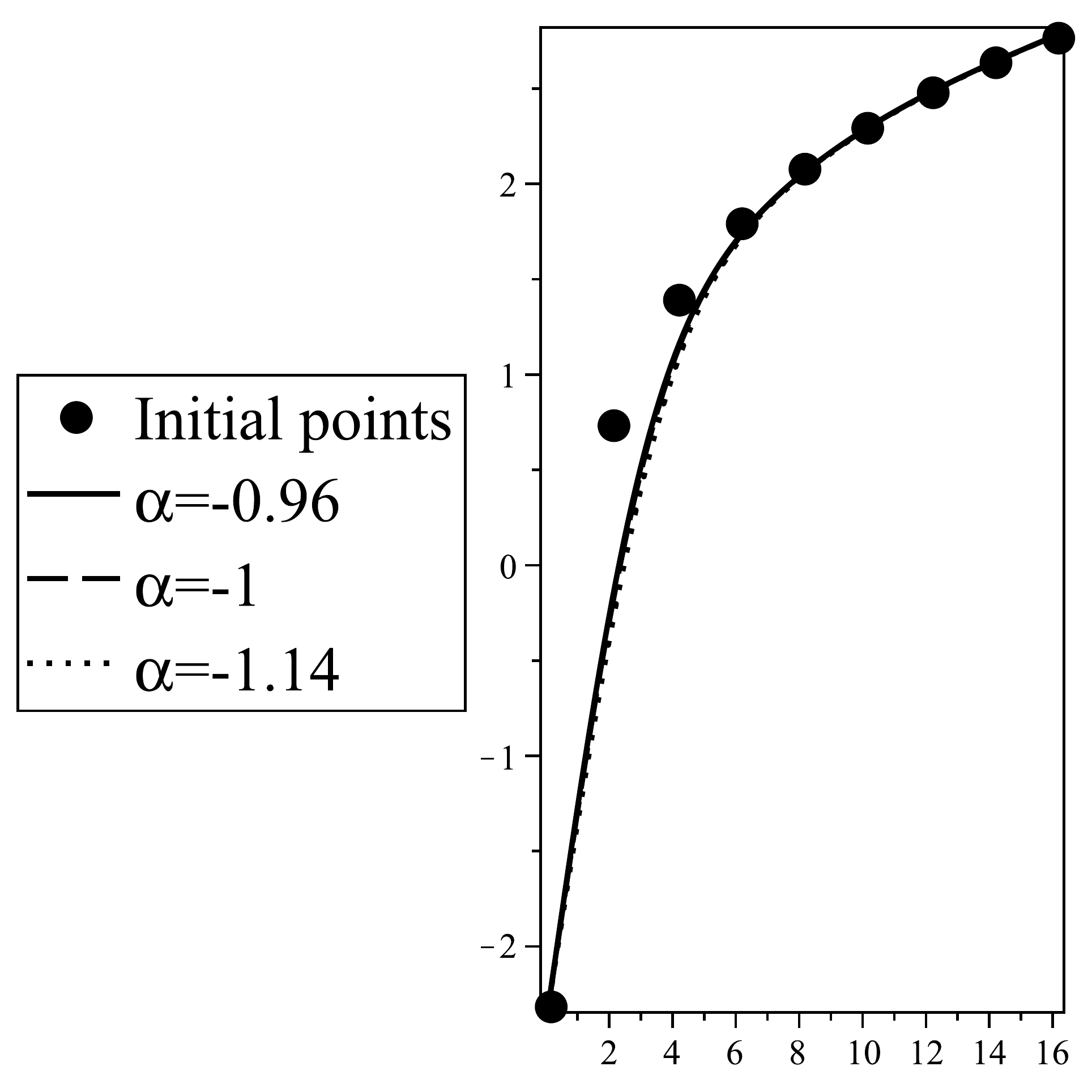, width=2.0 in} \\
(a) $4$-point scheme & (b) $6$-point scheme & (c) $8$-point scheme
\end{tabular}
\end{center}
 \caption[]{\label{monotonic}\emph{Curves generated by the proposed schemes after four subdivision steps where the initial data is taken from the monotonic function $f(x)=ln(x)$: $x \in (0,18)$.}}
\end{figure}

\begin{figure}[h!] 
\begin{center}
\begin{tabular}{ccc}
\epsfig{file=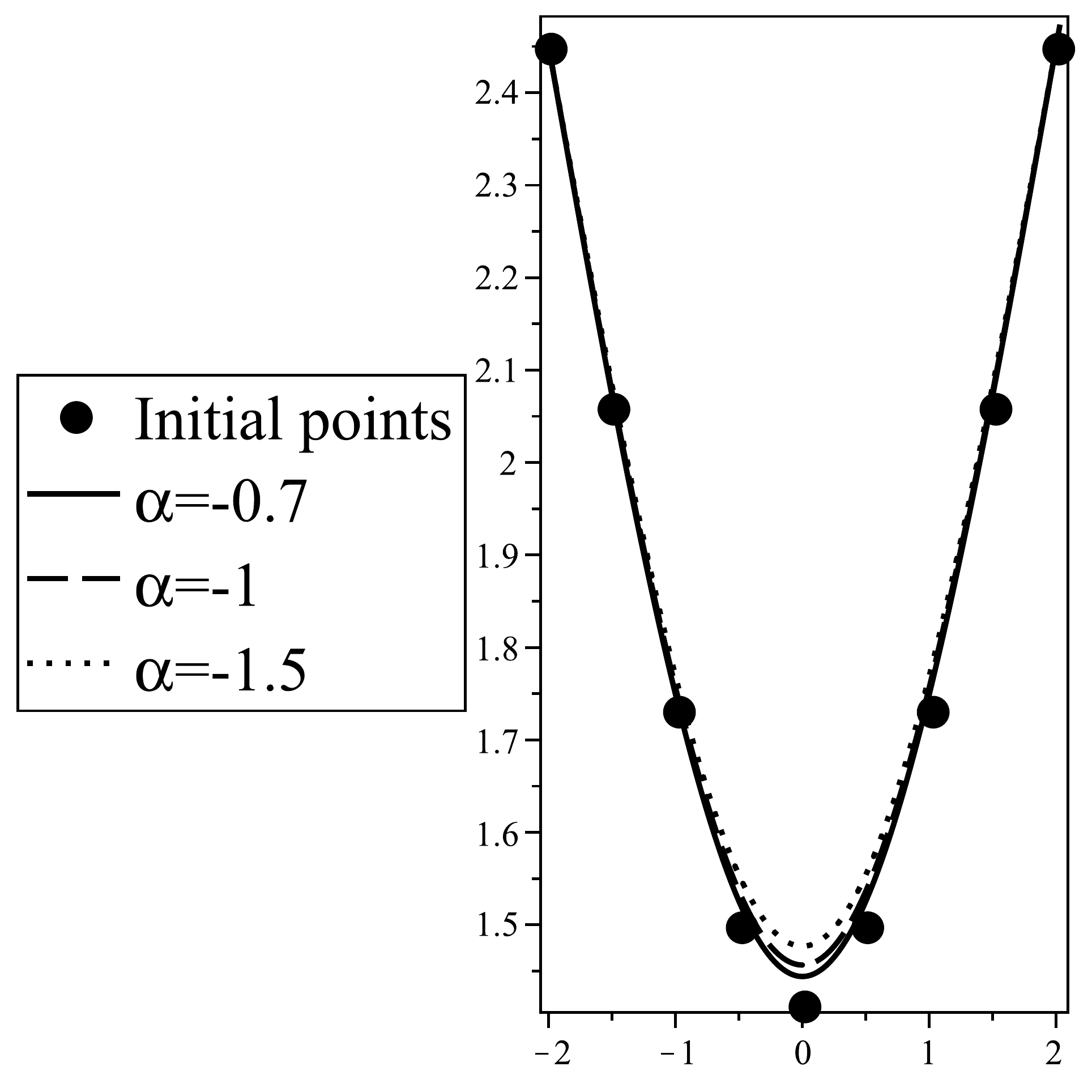, width=2.0 in} & \epsfig{file=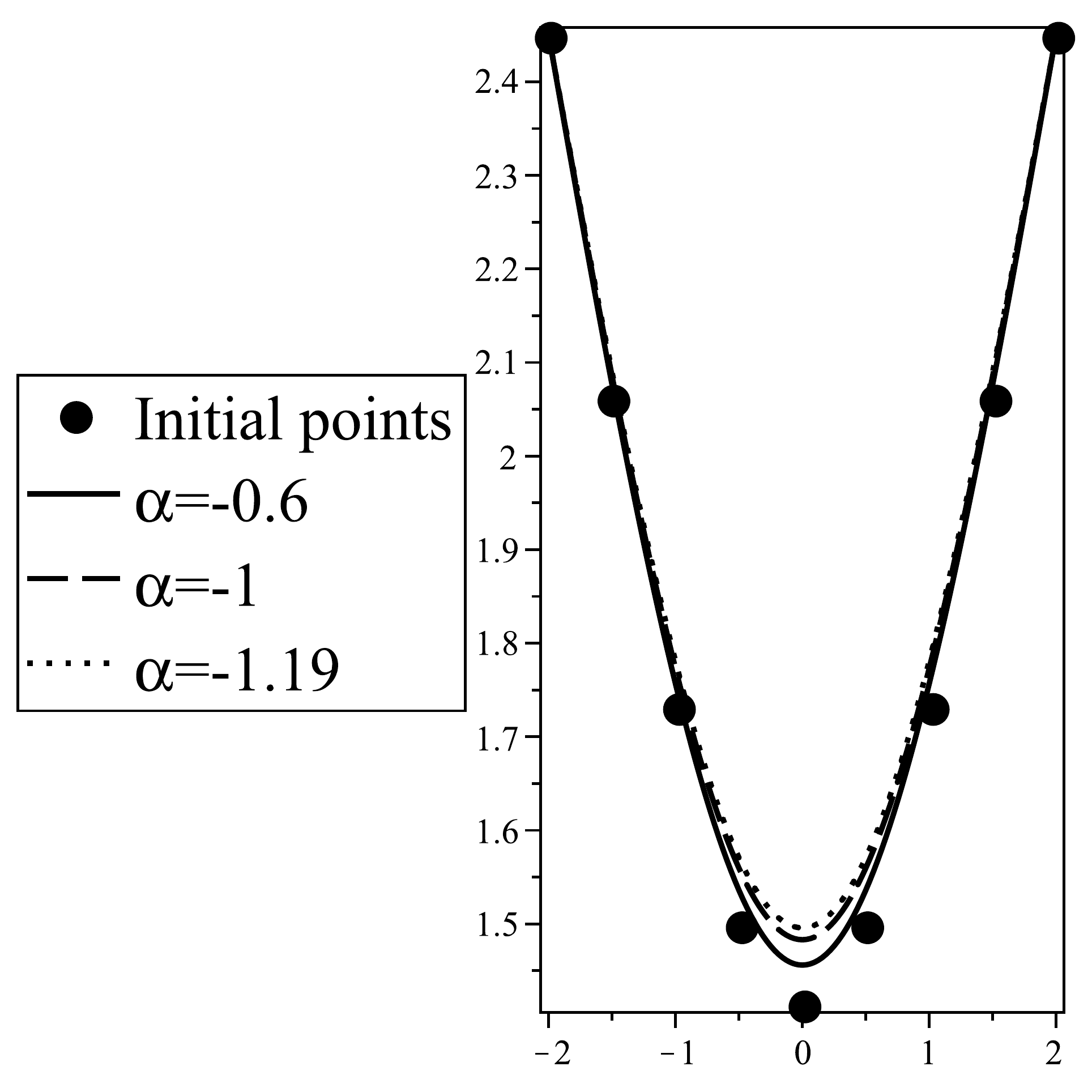, width=2.0 in} & \epsfig{file=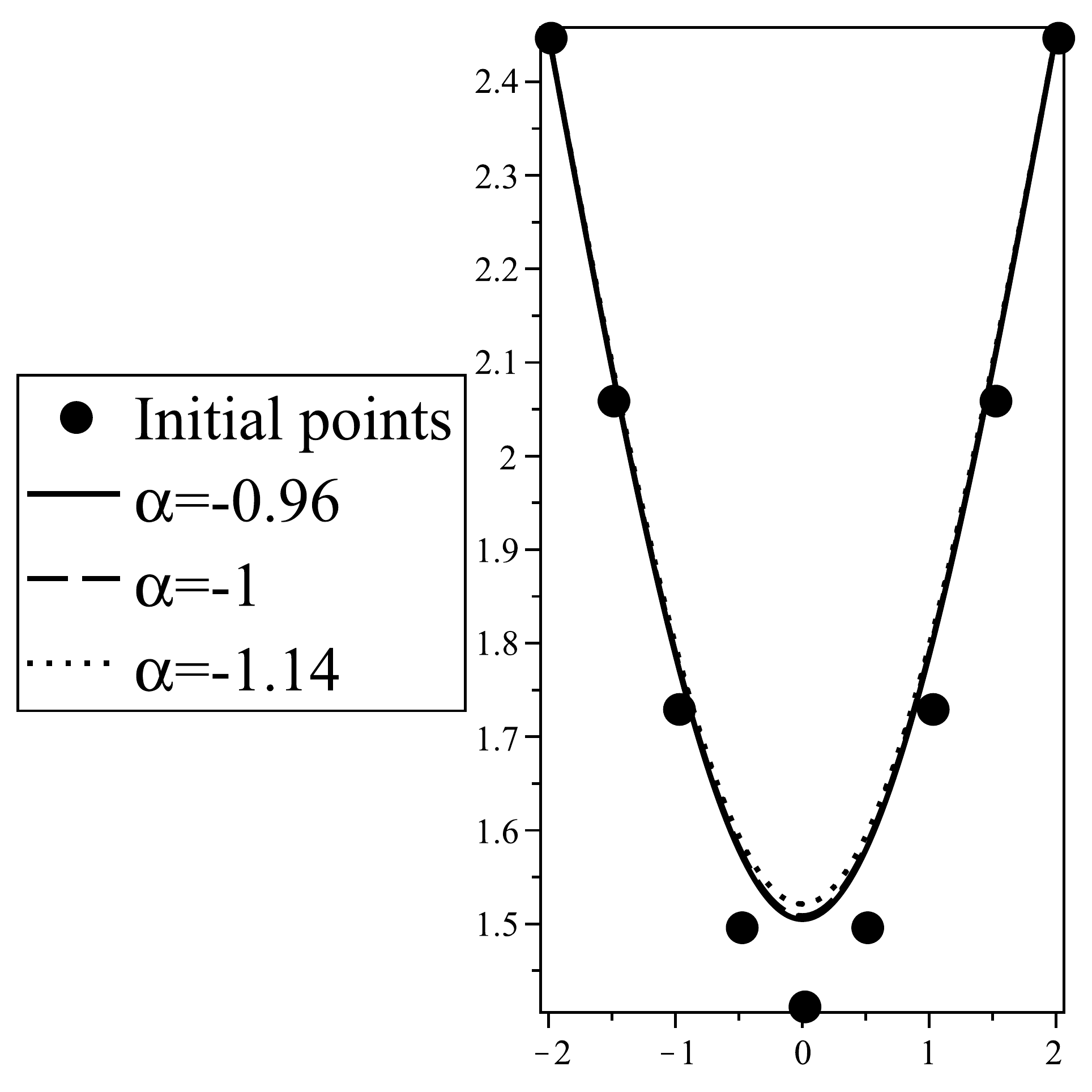, width=2.0 in} \\
(a) $4$-point scheme & (b) $6$-point scheme & (c) $8$-point scheme
\end{tabular}
\end{center}
 \caption[]{\label{convex}\emph{Curves generated by the proposed schemes after four subdivision steps where the initial data is taken from the convex function $f(x)=\sqrt{x^{2}+2}$.}}
\end{figure}

\begin{figure}[h!] 
\begin{center}
\begin{tabular}{ccc}
\epsfig{file=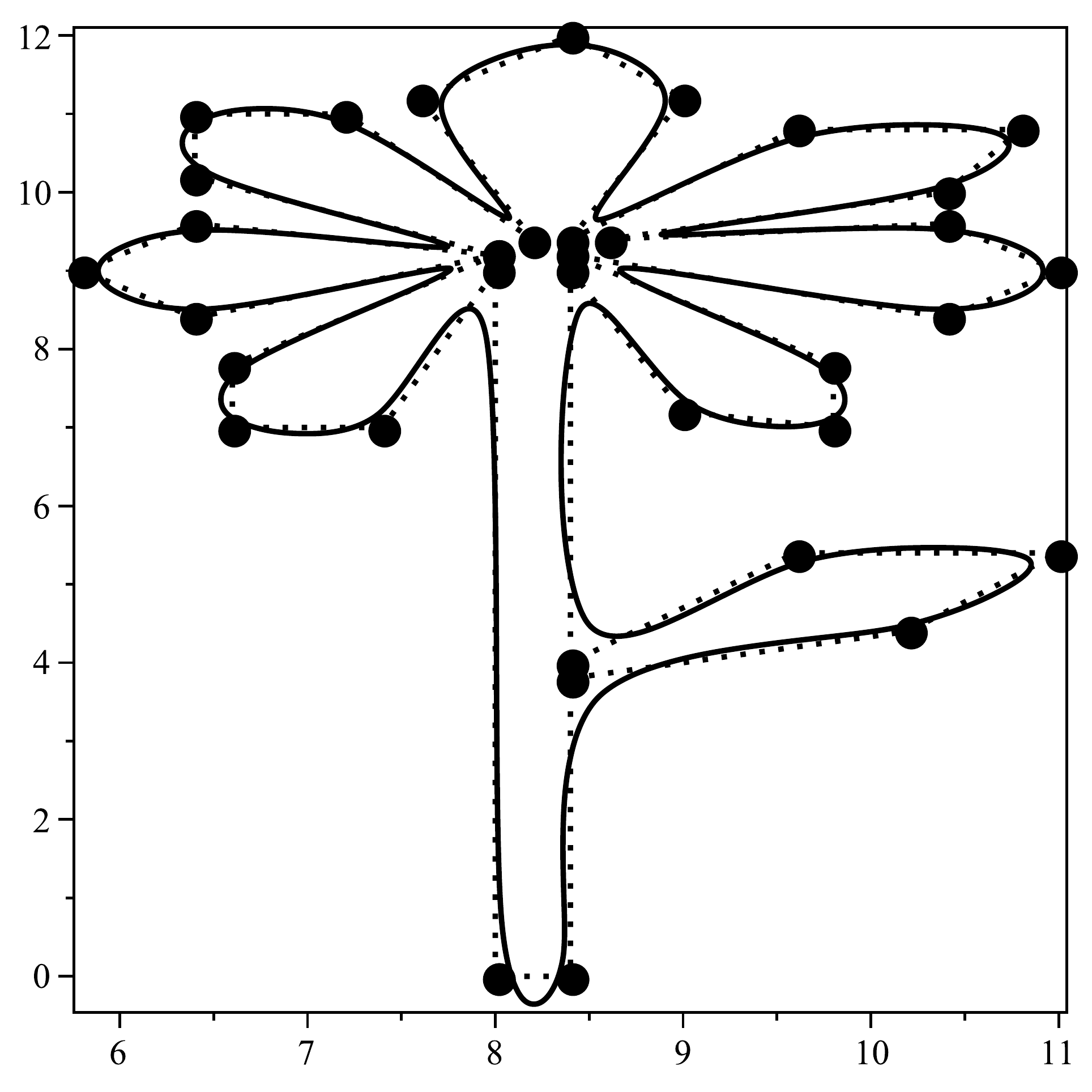, width=2.0 in} & \epsfig{file=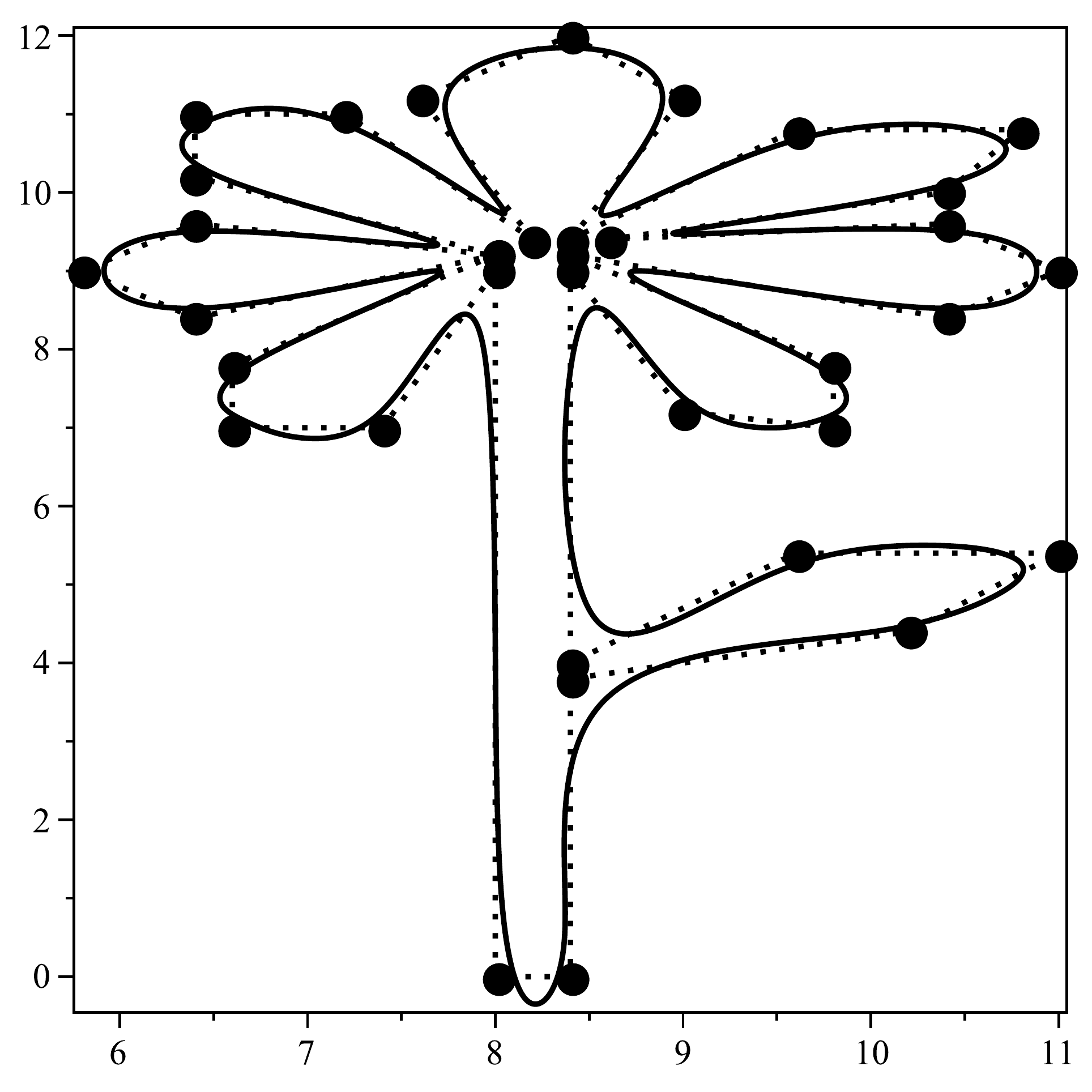, width=2.0 in} & \epsfig{file=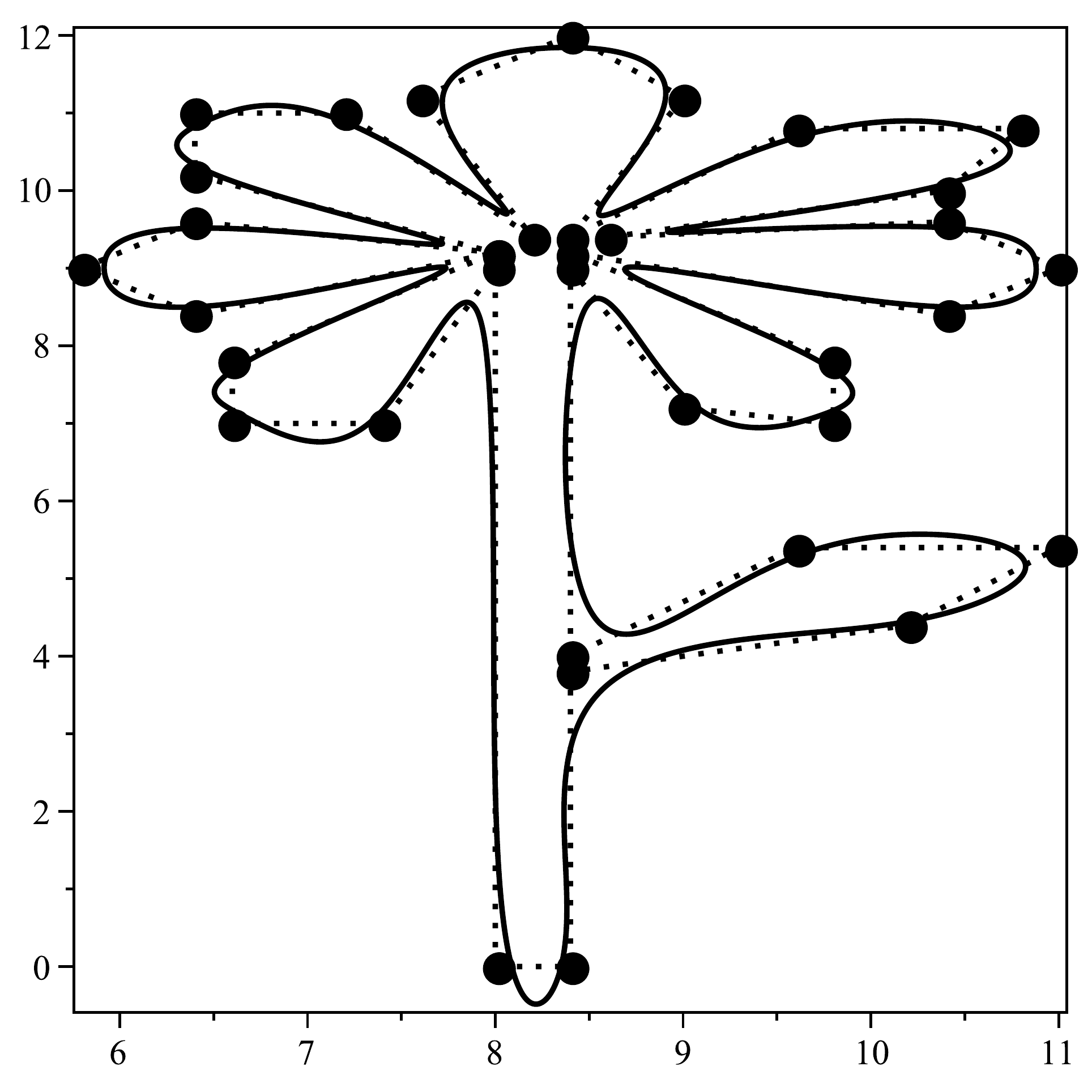, width=2.0 in} \\
(a) $\alpha=-0.28$ & (b) $\alpha=-0.25$ & (c) $\alpha=-0.19$\\
\end{tabular}
\end{center}
 \caption[]{\label{flower}\emph{Solid lines in (a), (b) and (c) show the limit curve generated by the $4$-point, $6$-point and $8$-point proposed schemes.
 }}
\end{figure}

\begin{figure}[h!] 
\begin{center}
\begin{tabular}{ccc}
\epsfig{file=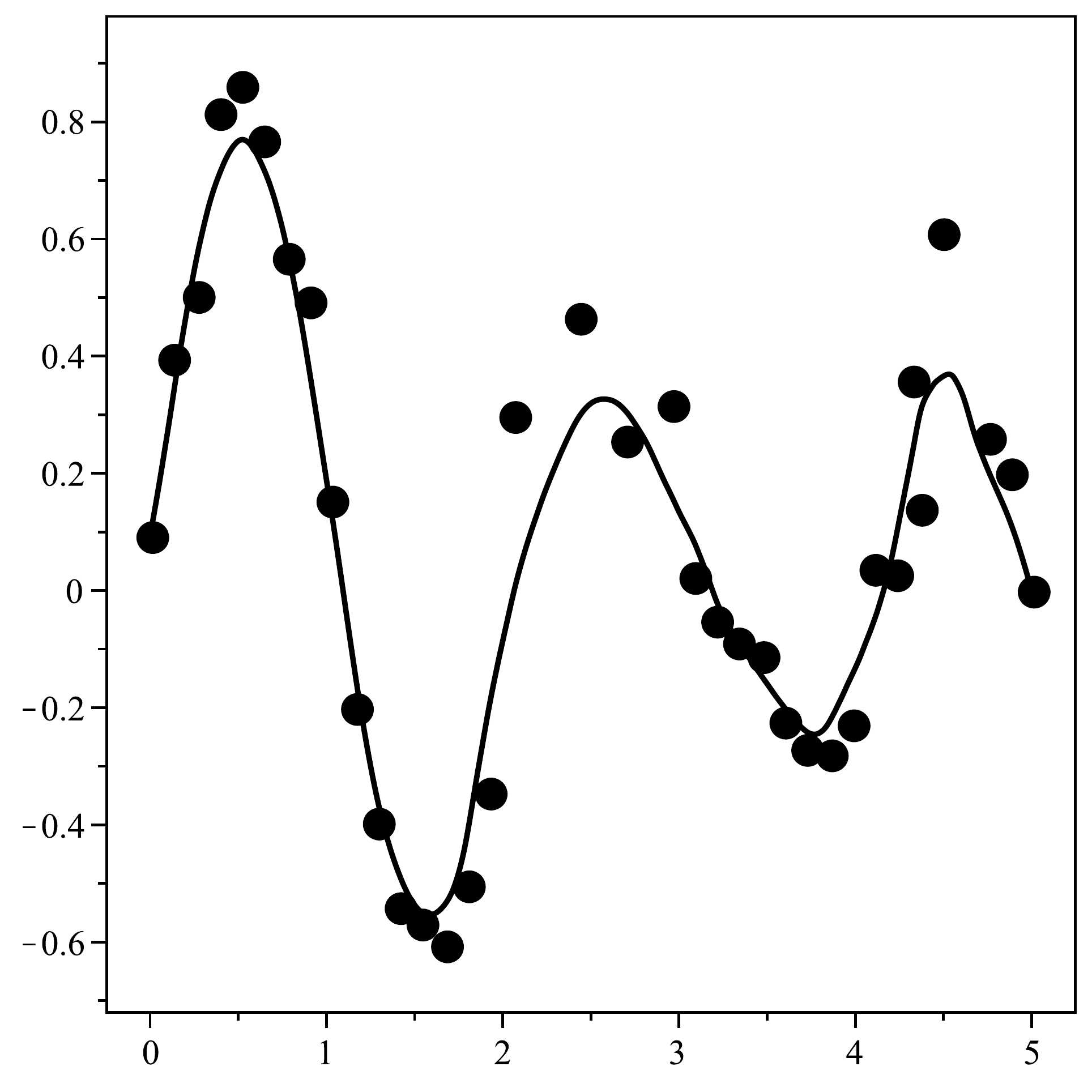, width=2.0 in} & \epsfig{file=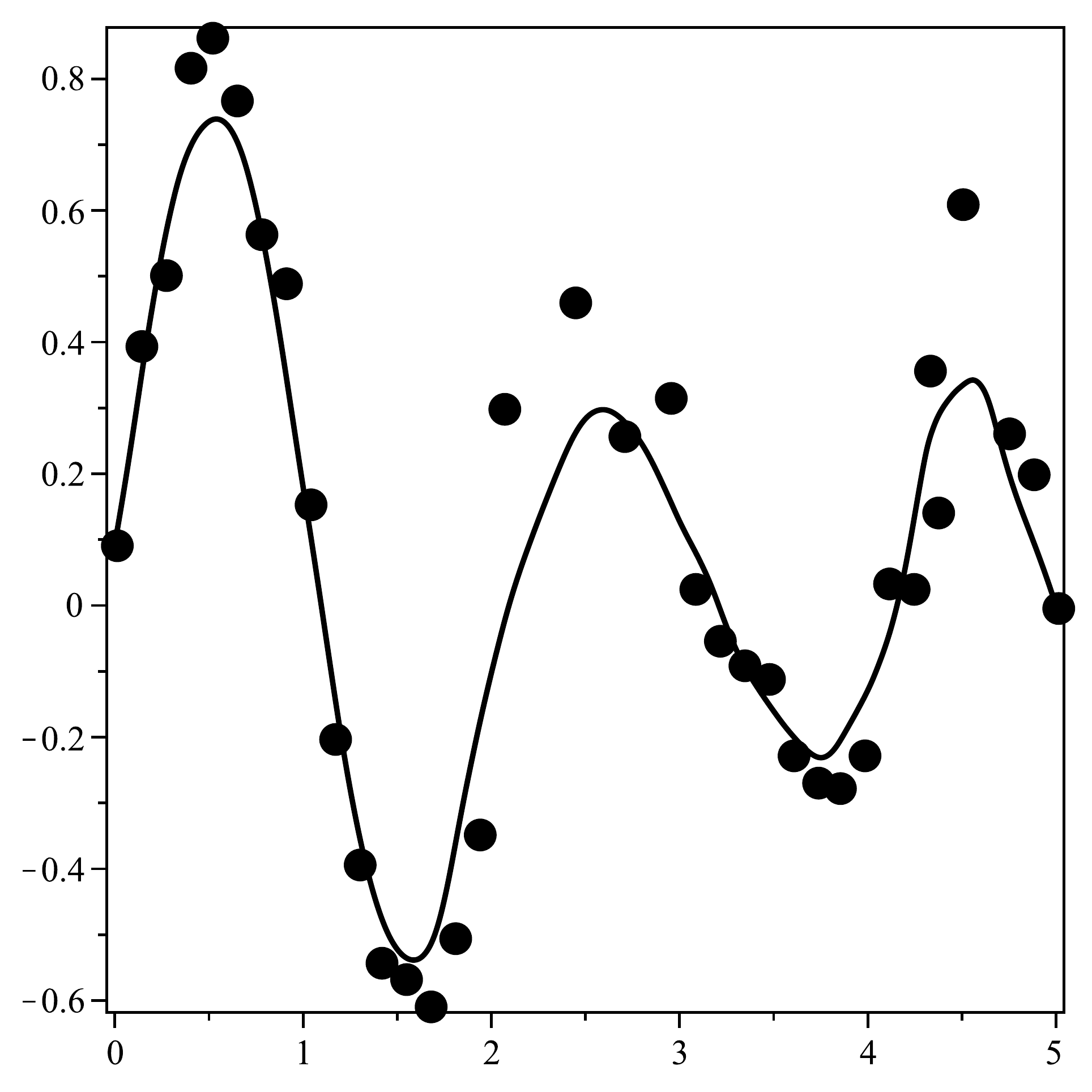, width=2.0 in} & \epsfig{file=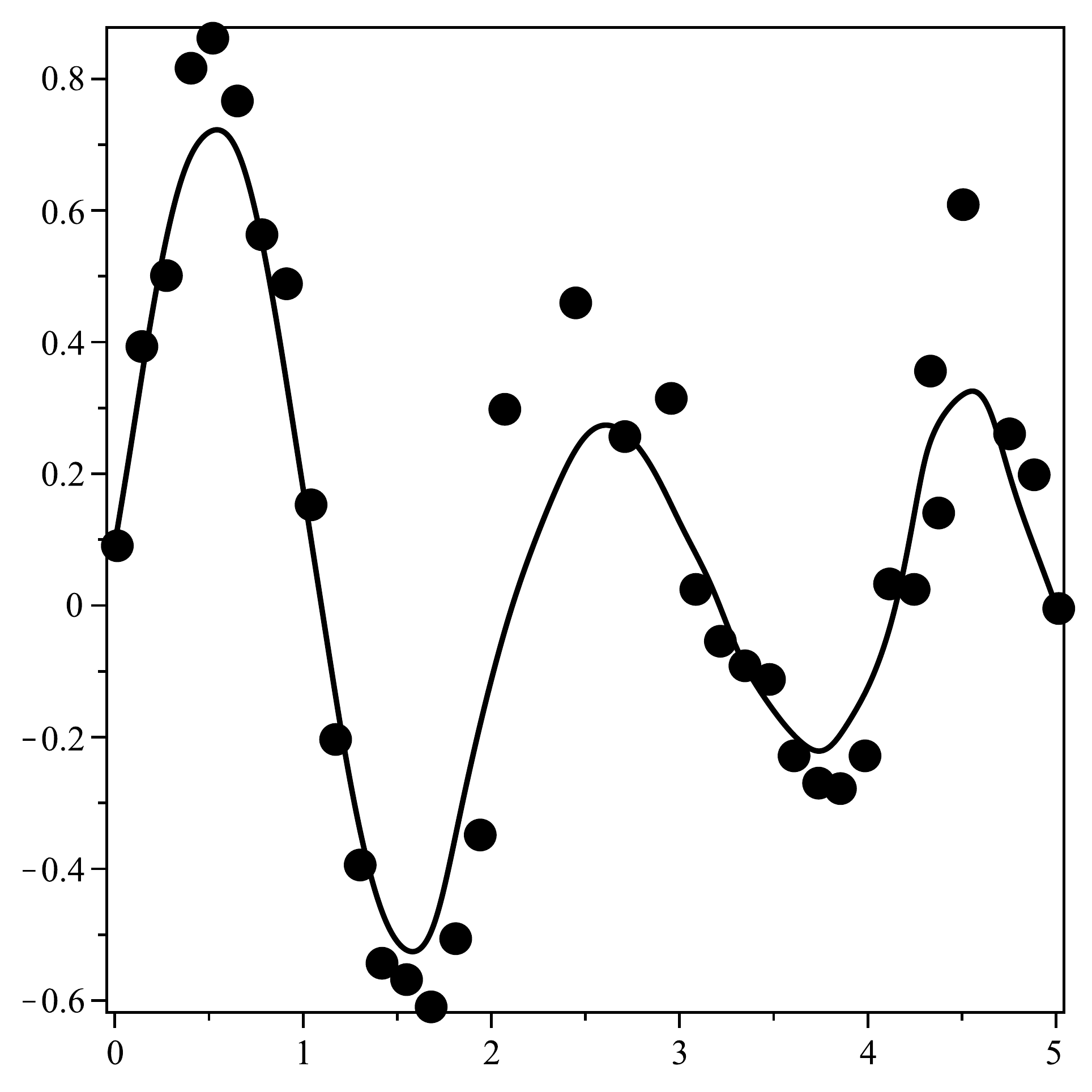, width=2.0 in} \\
(a) $\alpha=-2.5$ & (b) $\alpha=-2$ & (c) $\alpha=-1.7$\\
\end{tabular}
\end{center}
 \caption[]{\label{noise}\emph{Solid lines in (a), (b) and (c) show the limit curve generated by the $4$-point, $6$-point and $8$-point proposed schemes.
 }}
\end{figure}

\begin{landscape}
\begin{figure}[h!] 
\begin{center}
\begin{tabular}{cccc}
\epsfig{file=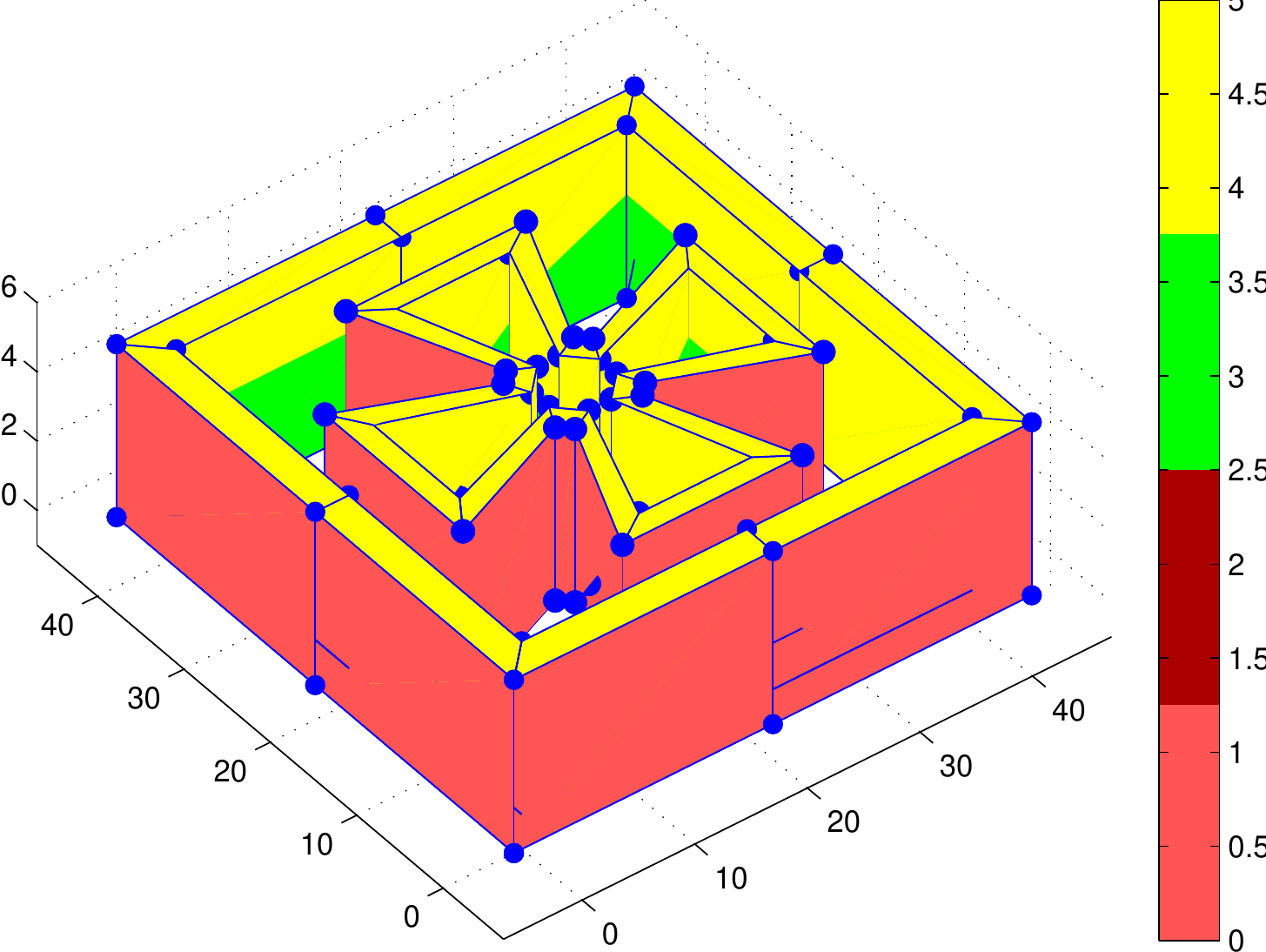, width=1.8 in} & \epsfig{file=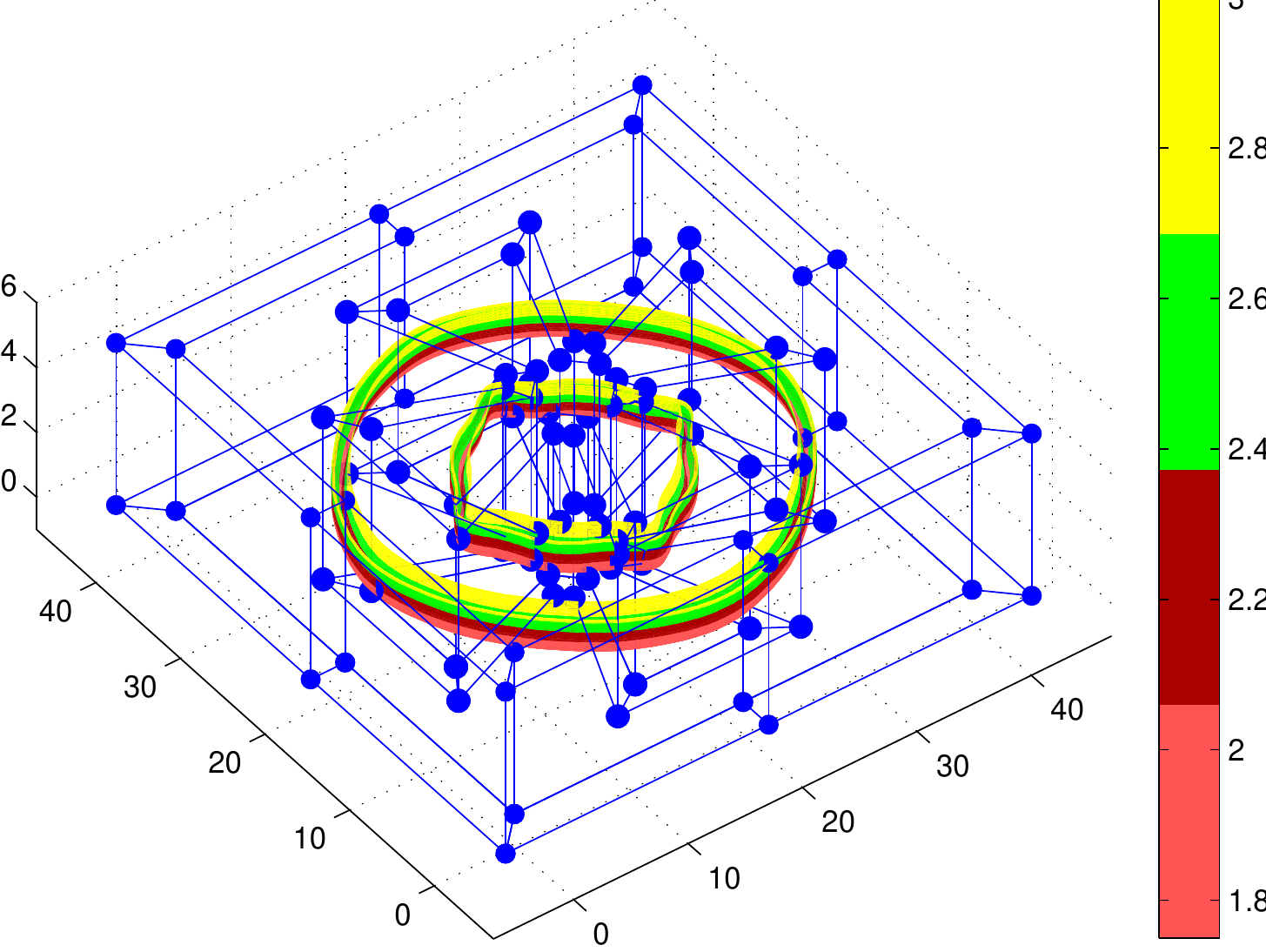, width=1.8 in} & \epsfig{file=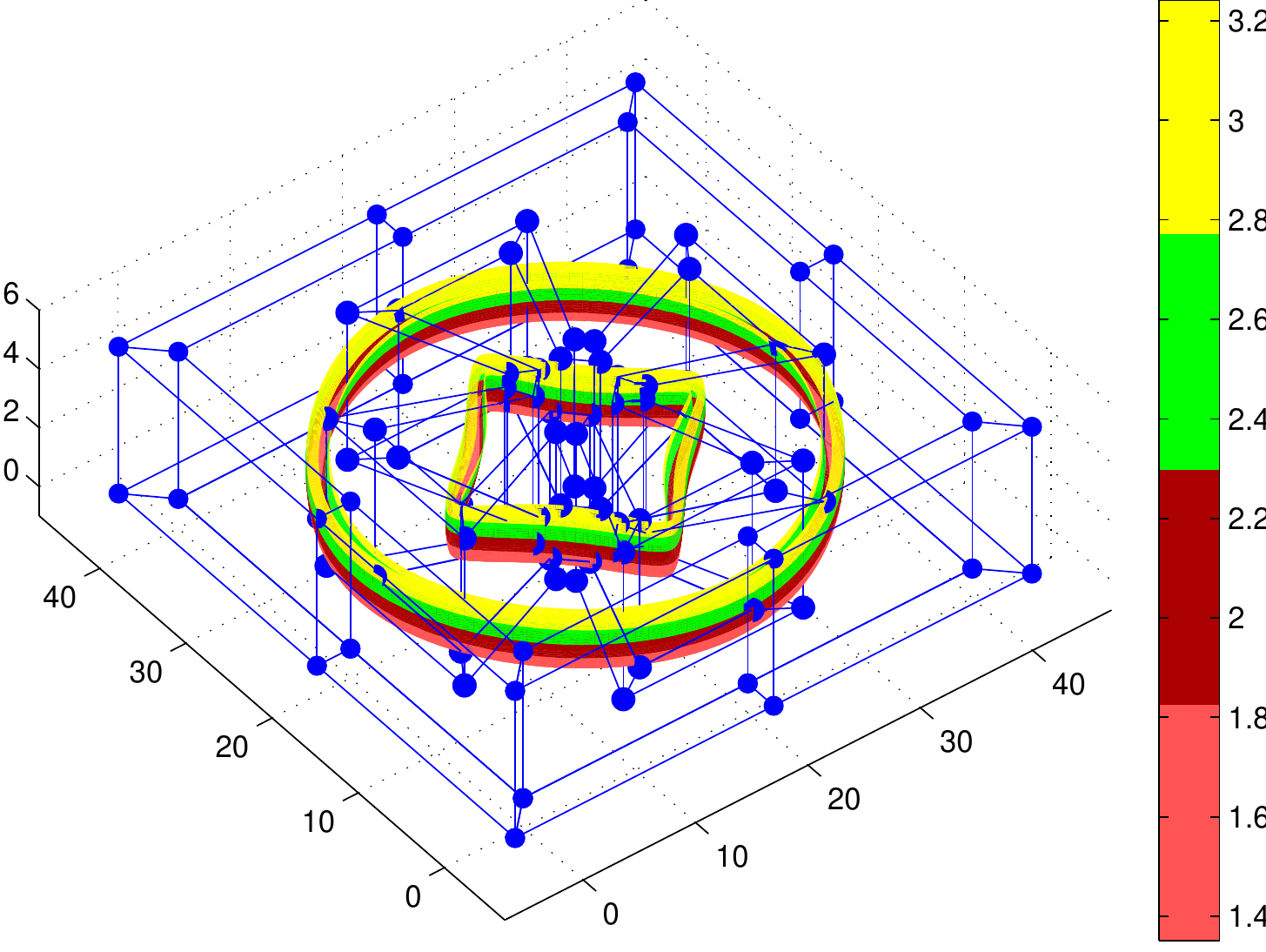, width=1.8 in} & \epsfig{file=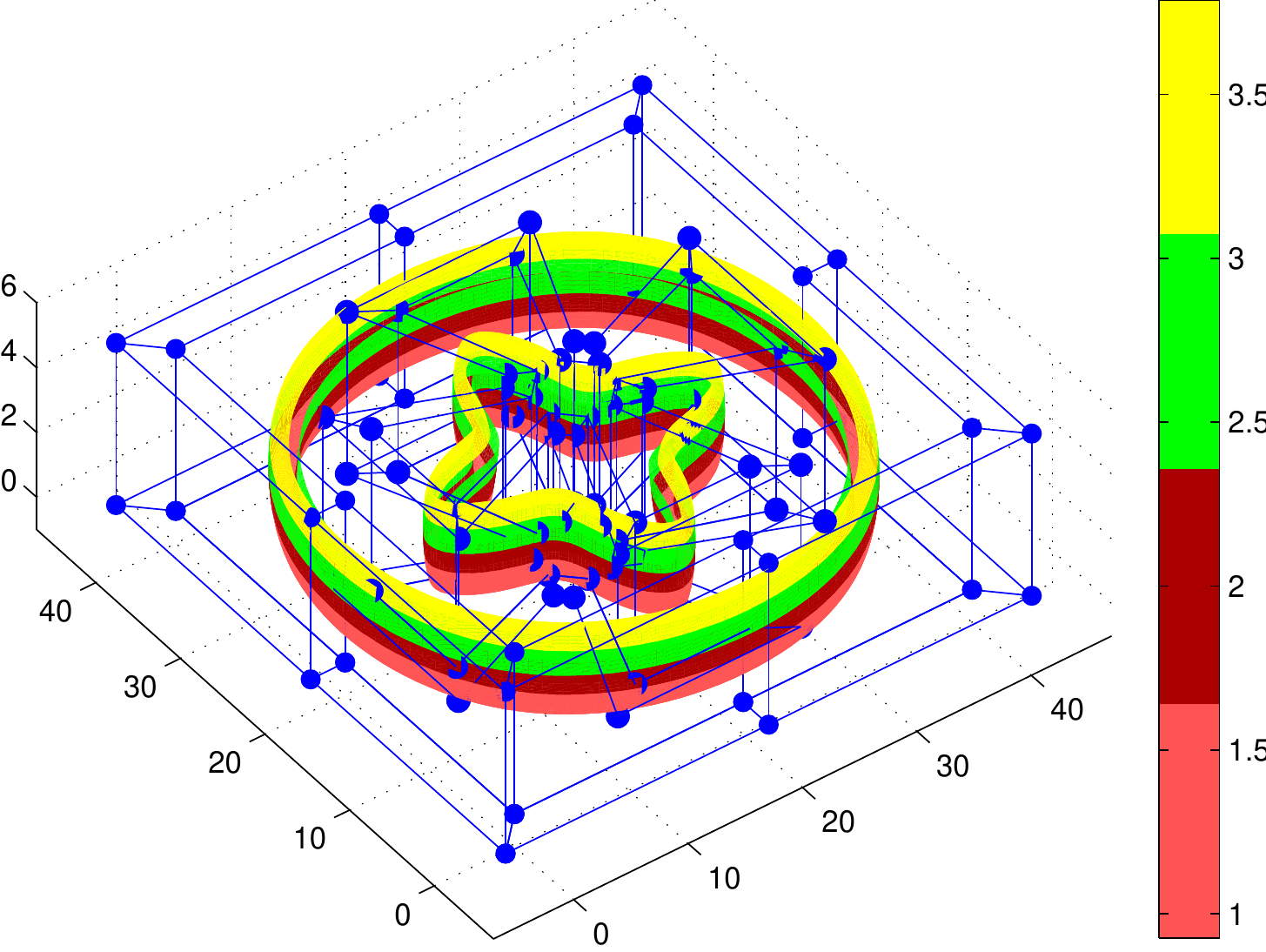, width=1.8 in}  \\
(a) initial mesh & (b) $\alpha=-2.5$ & (c) $\alpha=-2$ & (d) $\alpha=-1.5$\\
\epsfig{file=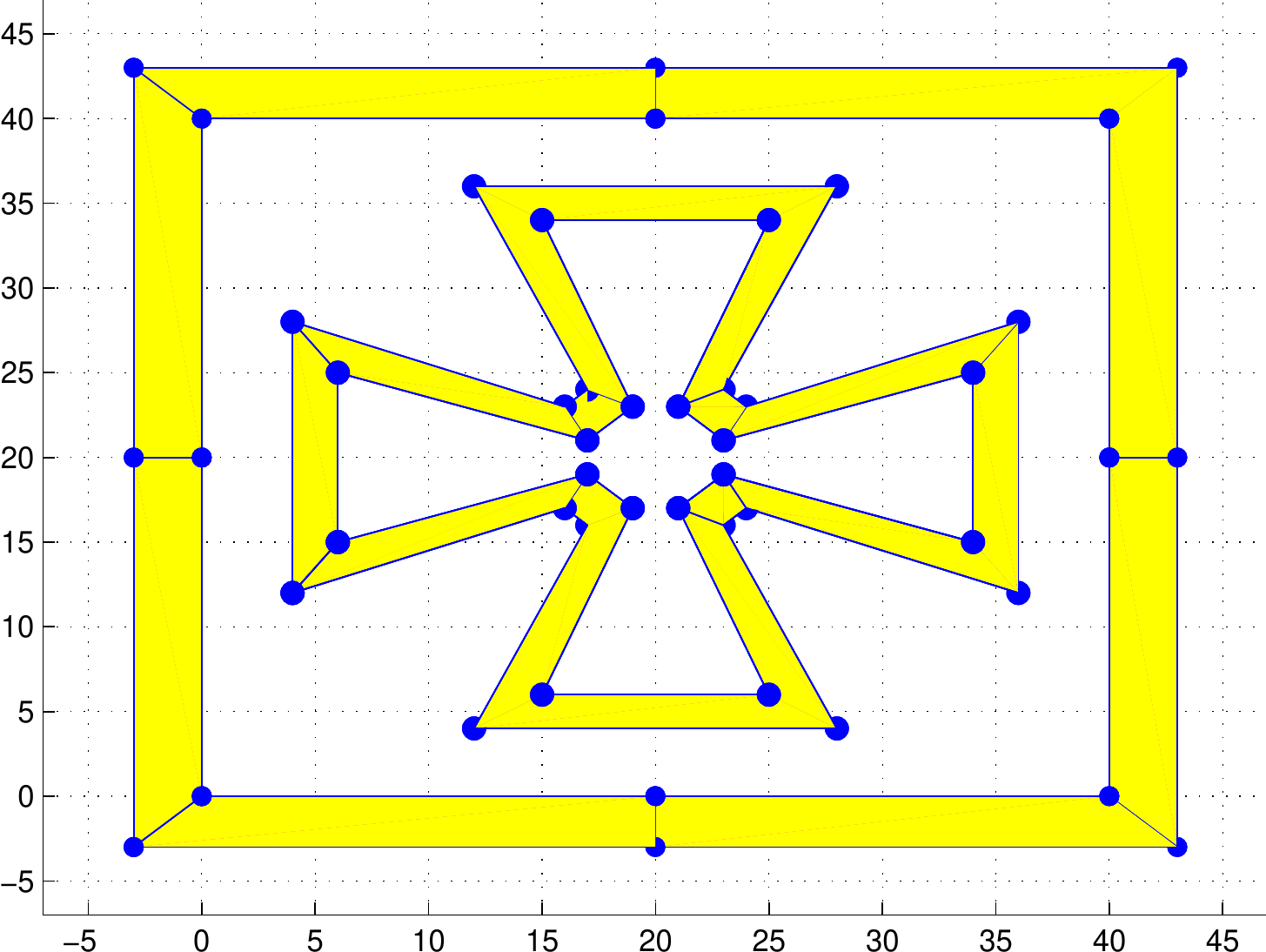, width=1.8 in} & \epsfig{file=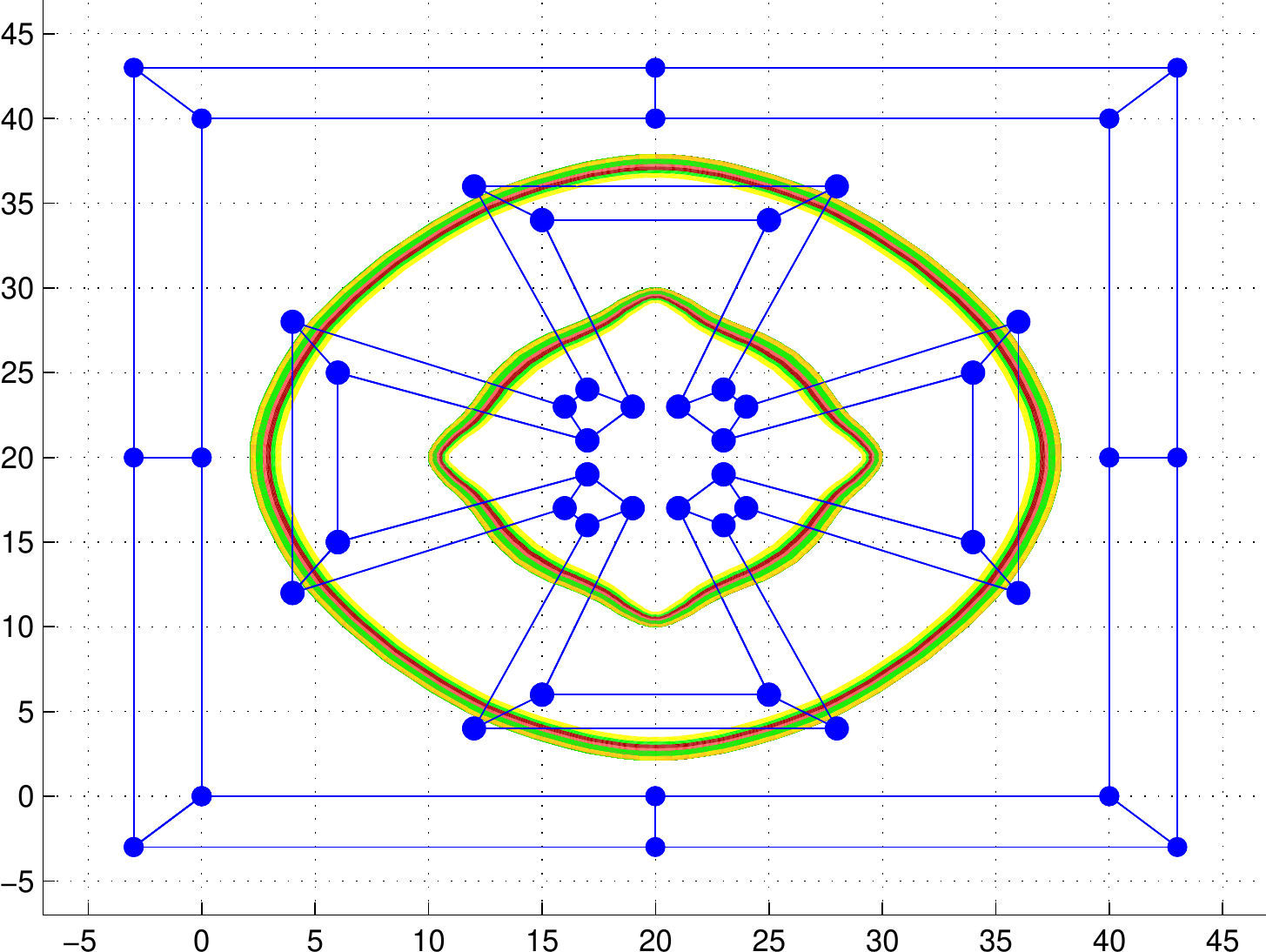, width=1.8 in} & \epsfig{file=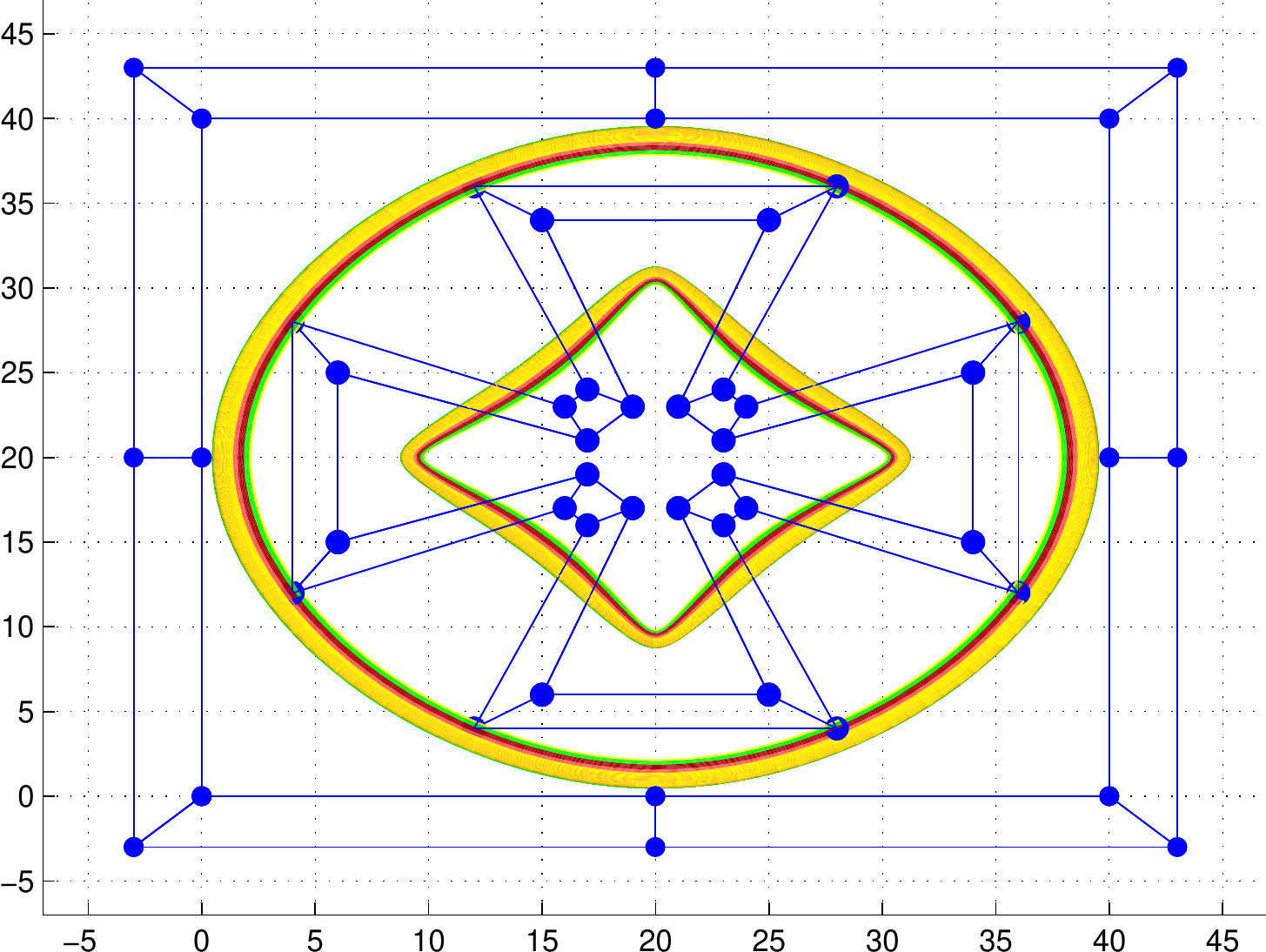, width=1.8 in} & \epsfig{file=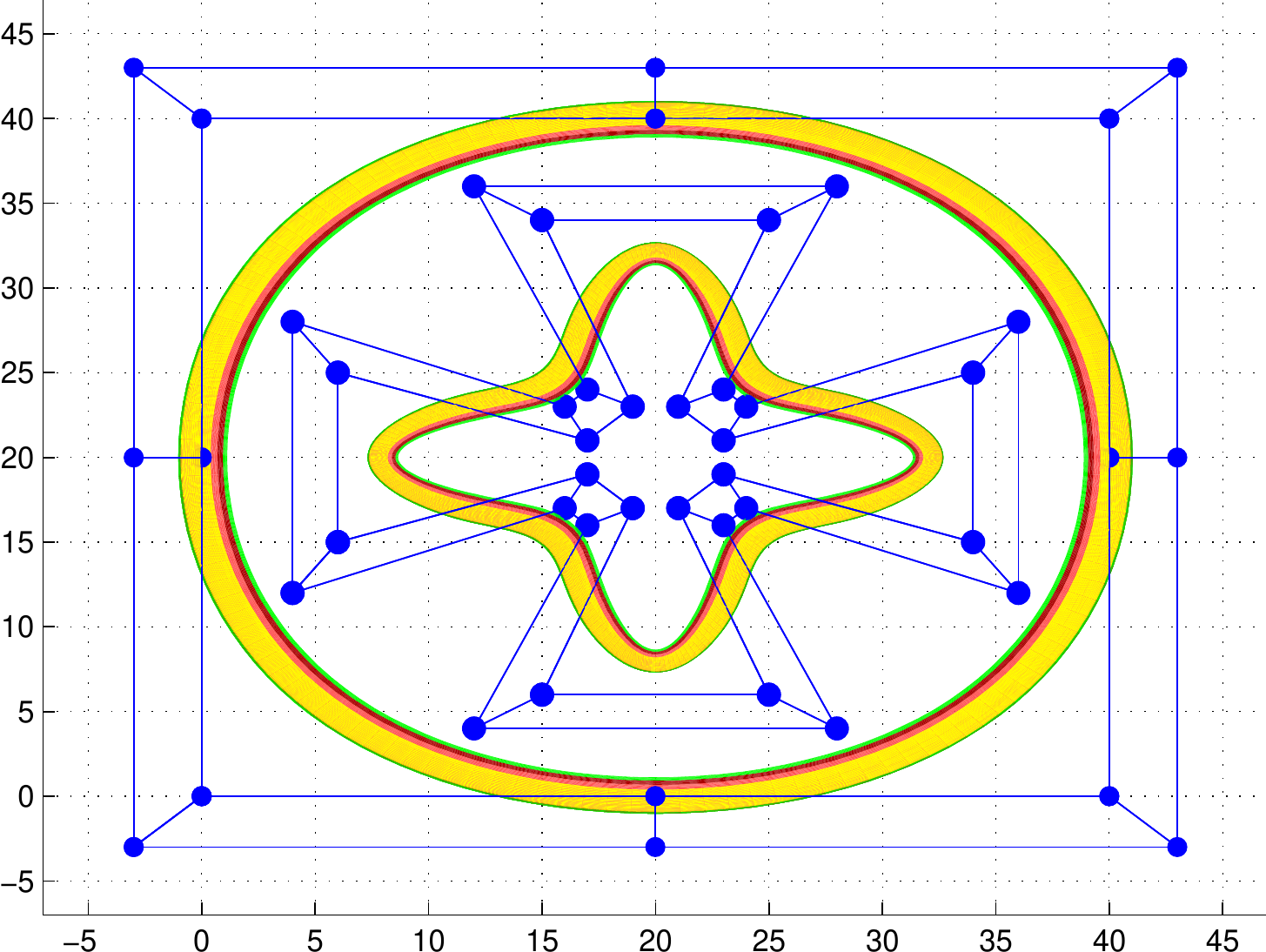, width=1.8 in}  \\
(e) & (f) & (g) & (h) \\
\epsfig{file=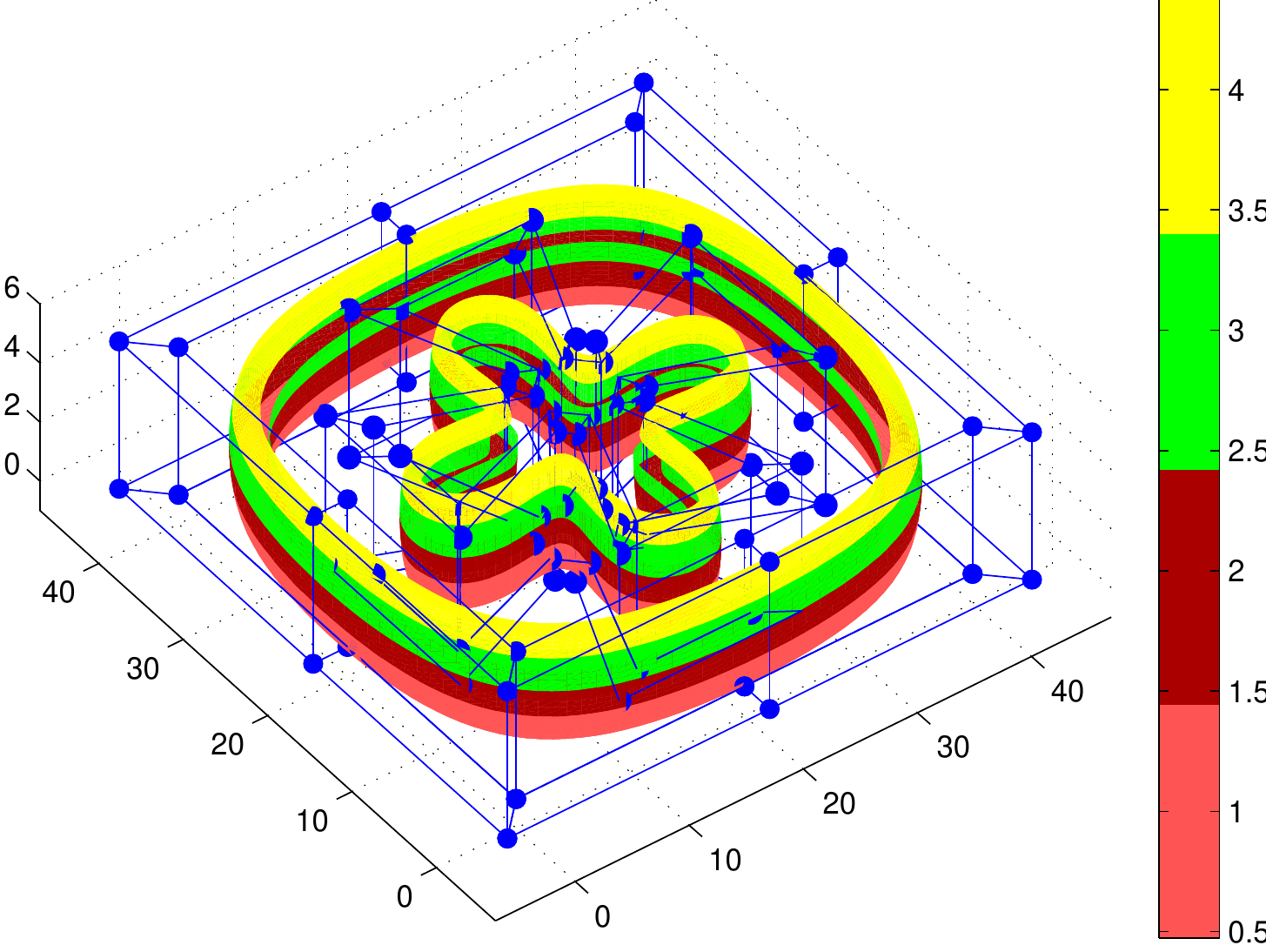, width=1.8 in} & \epsfig{file=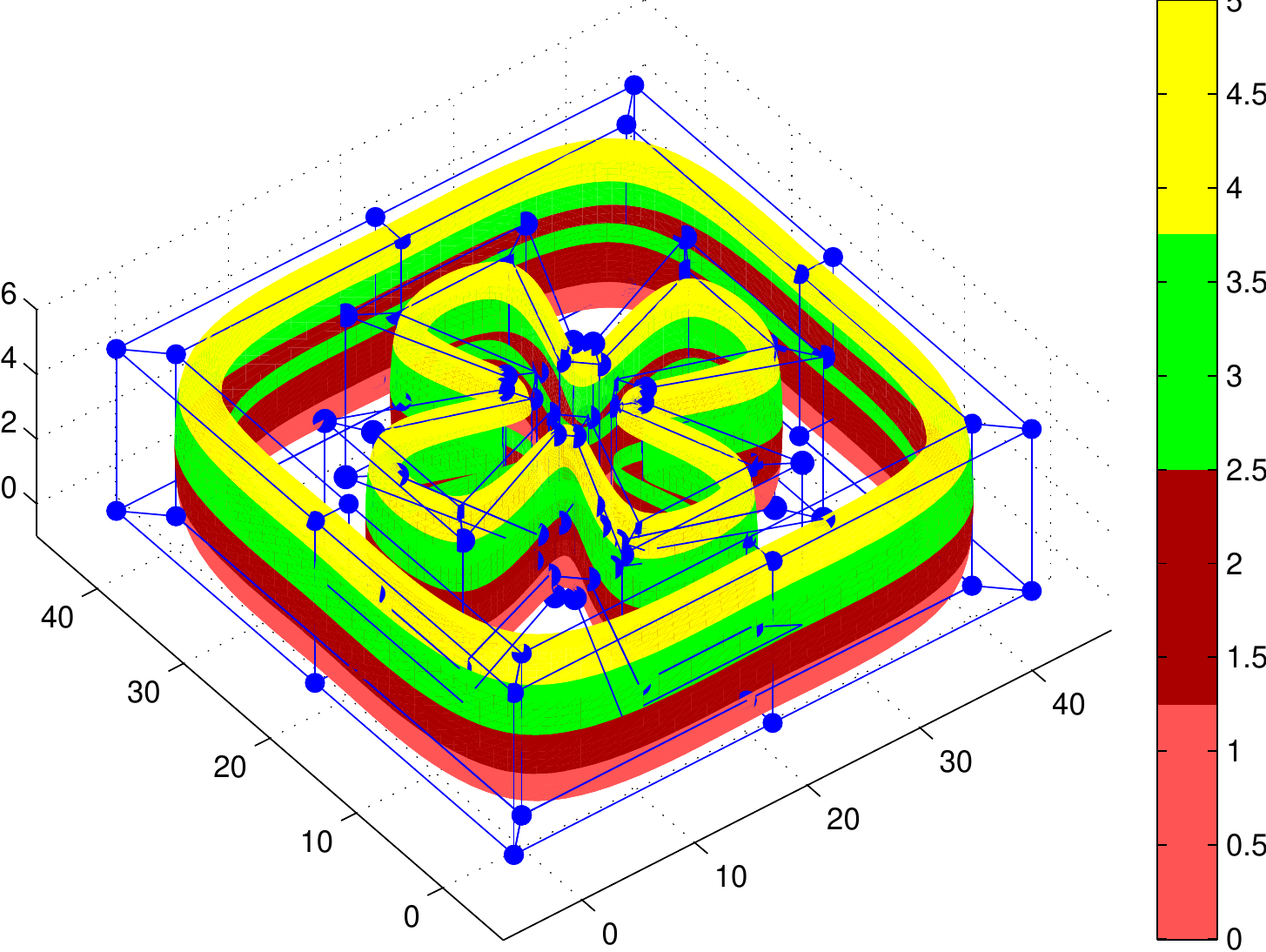, width=1.8 in} & \epsfig{file=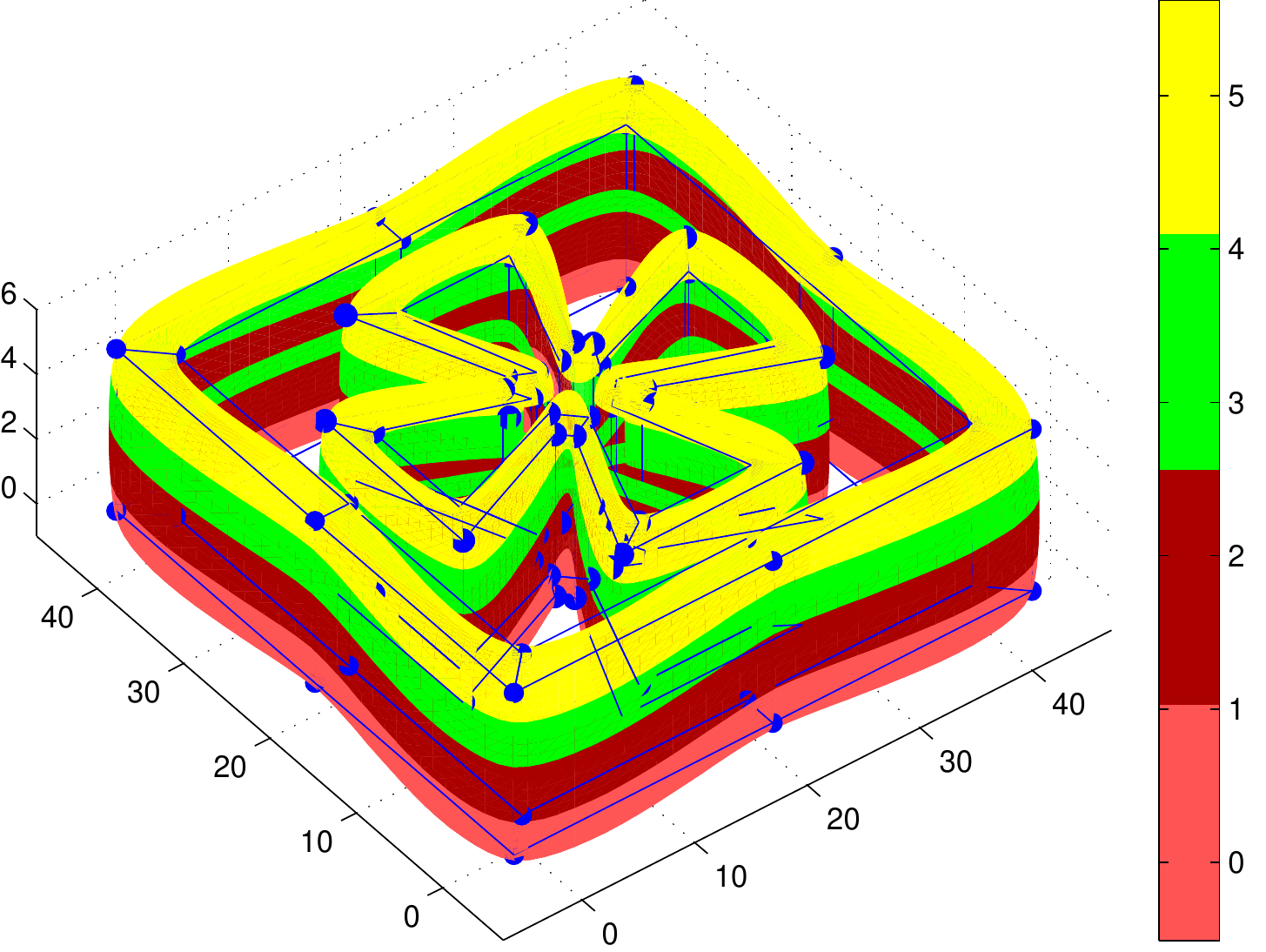, width=1.8 in} & \epsfig{file=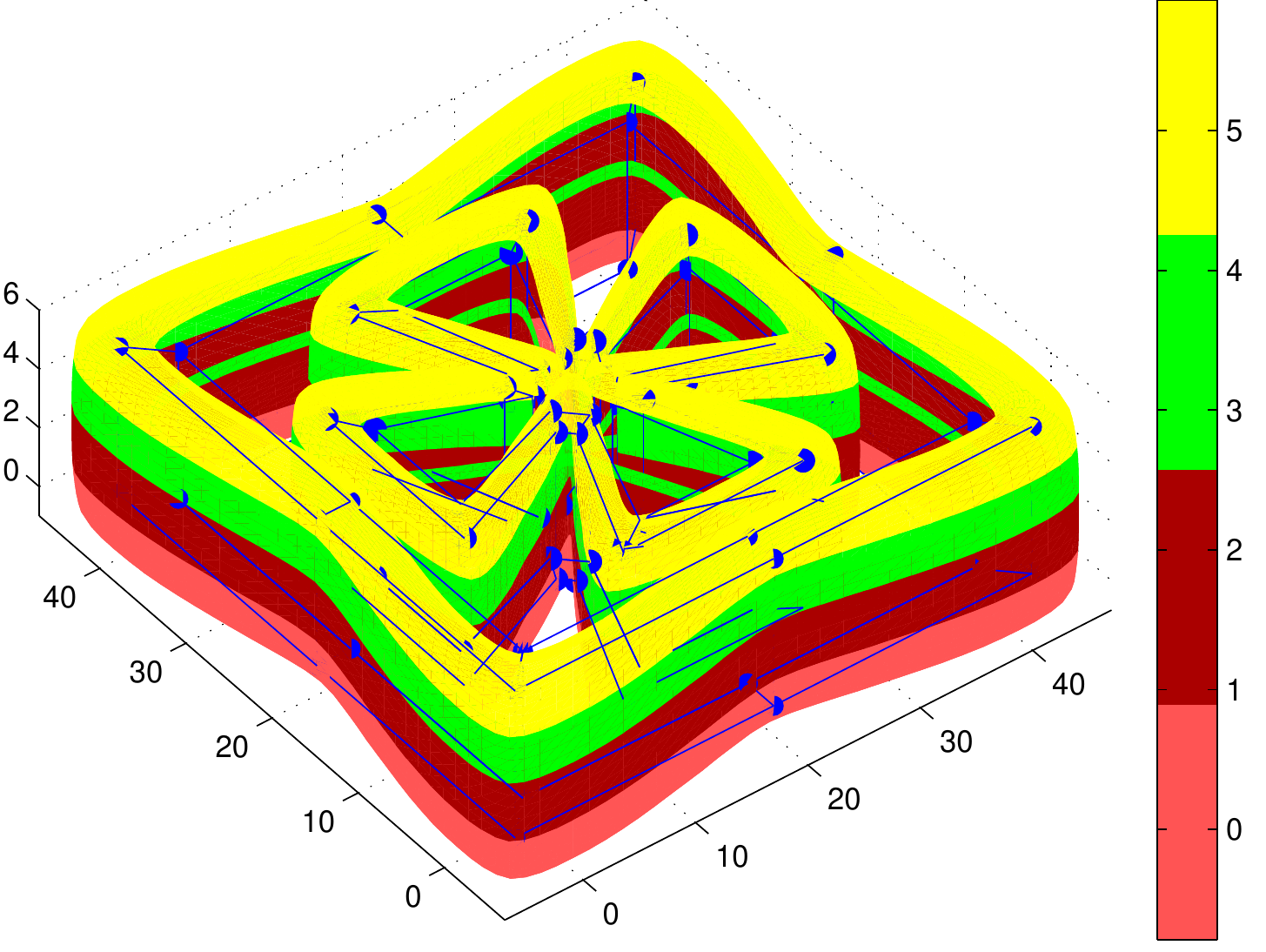, width=1.8 in}\\
(i) $\alpha=-1$ & (j) $\alpha=-0.5$ & (k) $\alpha=0$ & (l) $\alpha=0.25$ \\
\epsfig{file=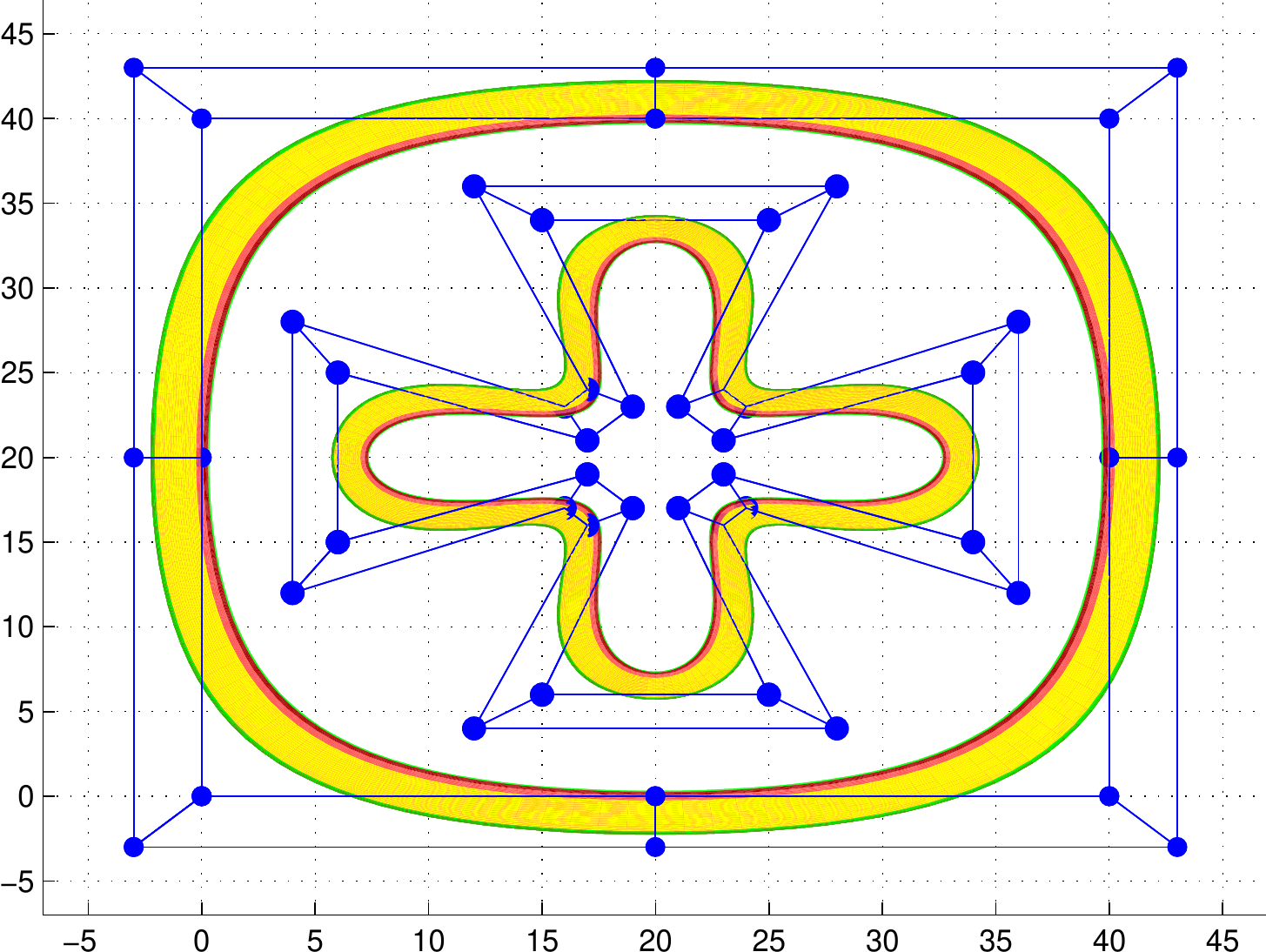, width=1.8 in} & \epsfig{file=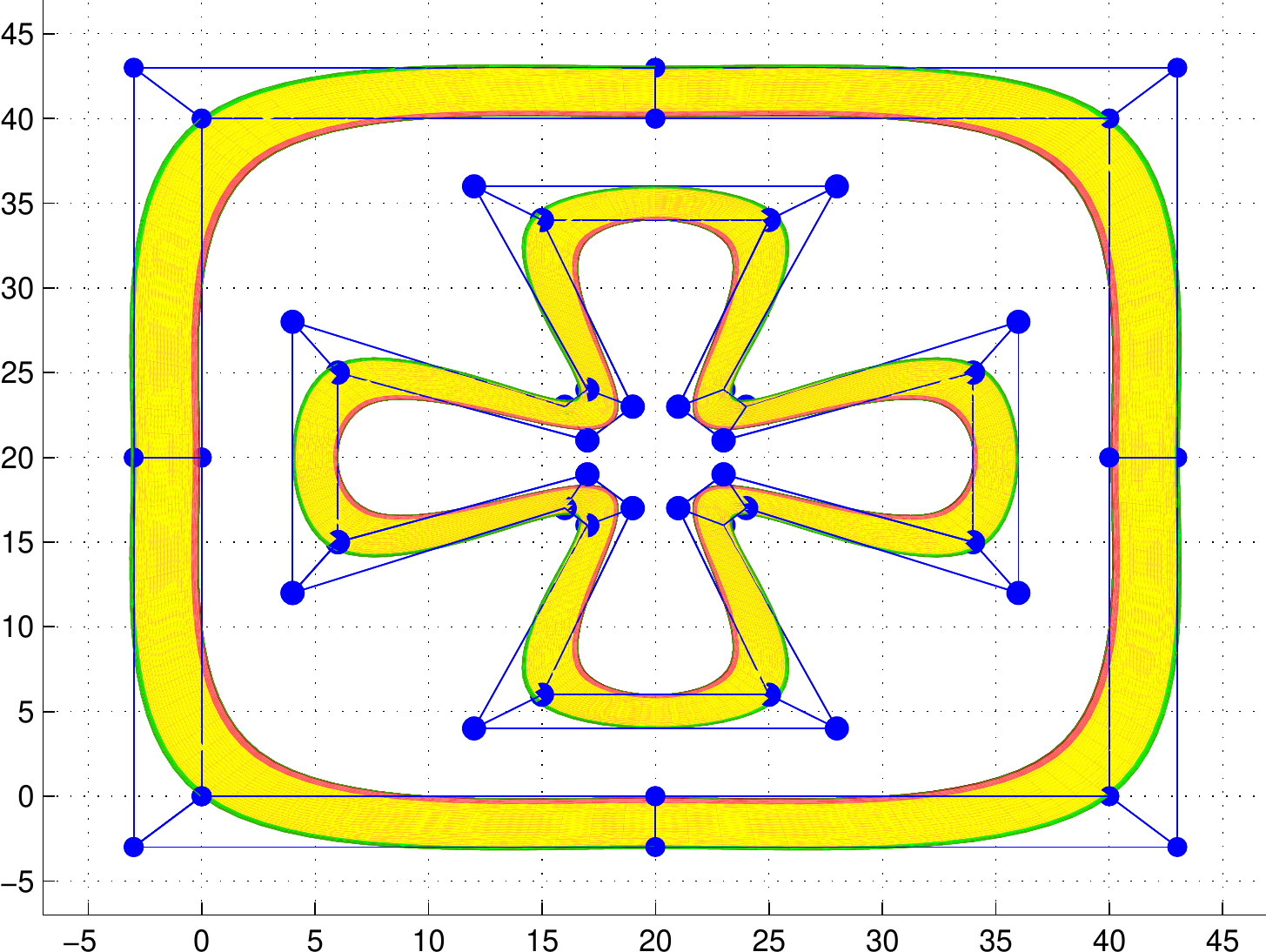, width=1.8 in} & \epsfig{file=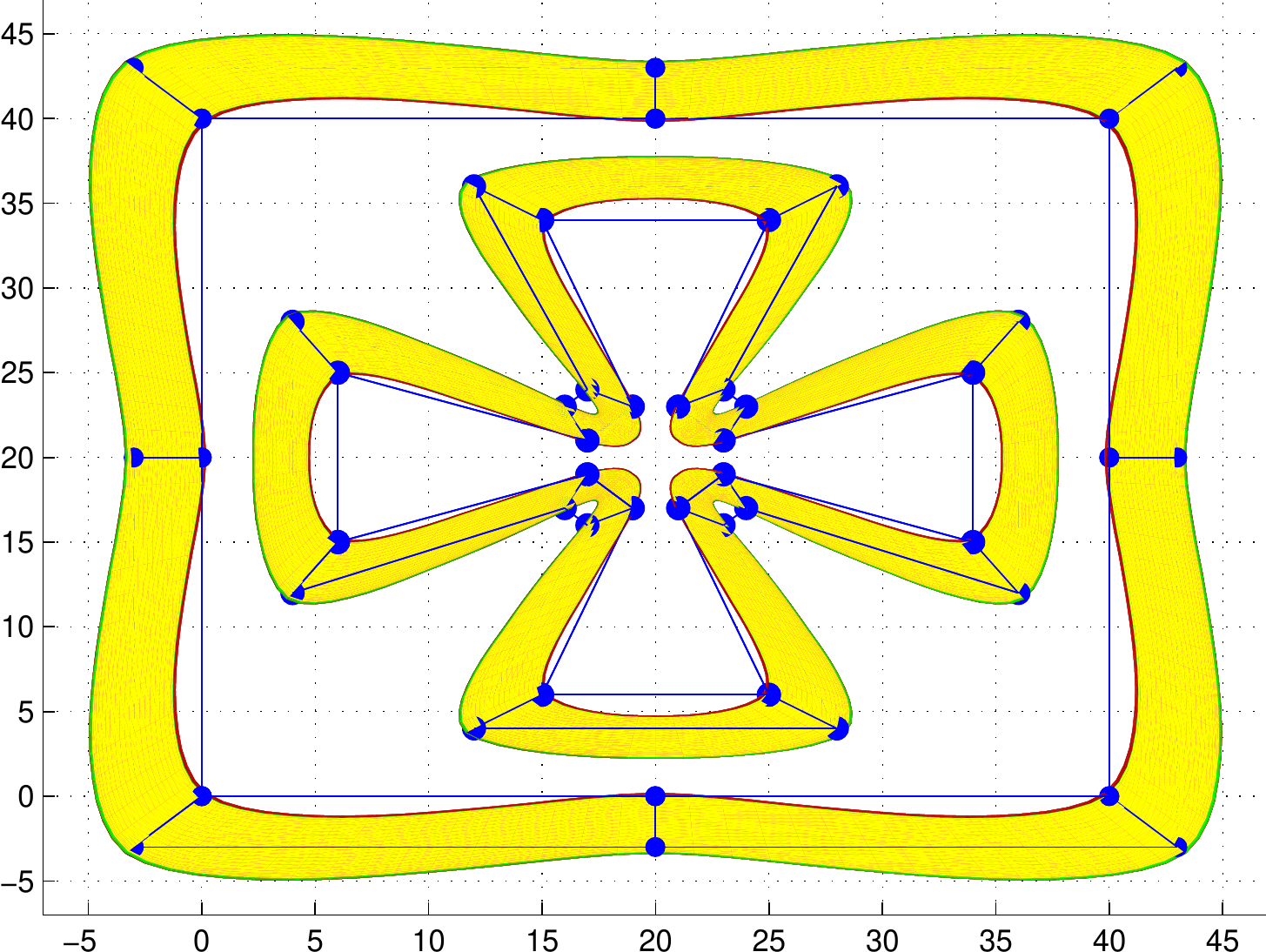, width=1.8 in} & \epsfig{file=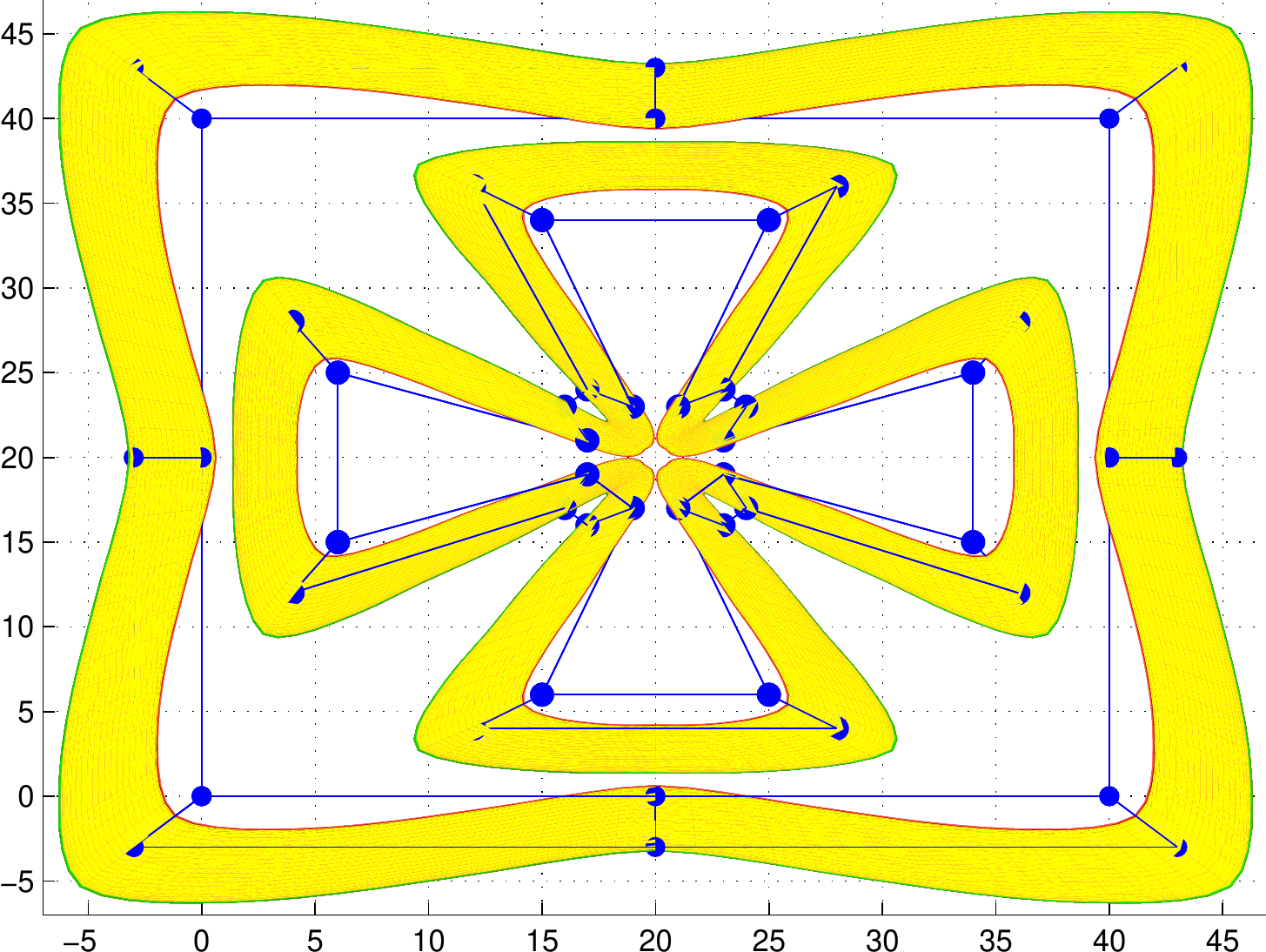, width=1.8 in}\\
(m) & (n) & (0) & (p)
\end{tabular}
\end{center}
 \caption[]{\label{surface}\emph{Surfaces produced by the tensor product of proposed $4$-point relaxed scheme after four subdivision levels.}}
\end{figure}
\end{landscape}

\section{Concluding Remarks}\label{Conclusion3}
In this article, we have derived the general refinement rules of the family of $(2n+2)$-point primal binary schemes. The proposed schemes are originated from the schemes of \cite{Dubuc}. We have also presented certain efficient algorithms with linear time complexity to check the properties of the proposed schemes. The beauty of the proposed family of schemes is that they carries the characteristics of both of its parent schemes. The proposed schemes show optimal continuity and polynomial generation. Proposed schemes reproduces polynomials up to degree one, while for specific value of tension parameters they can reproduce polynomials up to degree $2n+1$. The proposed schemes eliminate Gibbs phenomenon for negative values of tension parameter. Moreover, for a specific range of tension parameter, the proposed schemes also preserve monotonicity and convexity. The proposed schemes provide flexibility in fitting curves and surfaces, that is, using the same data points we can variate the 2D and 3D models in between or around the control polygon/mesh. Proposed schemes are a good choice for noisy data. In future, we will derive the non-uniform form of the proposed schemes.

%
%

\end{document}